\definecolor{newcol}{HTML}{42E393}
\definecolor{1}{rgb}{1,0.2,0.3}
\definecolor{2}{rgb}{0.1,0.3,0.5}
\definecolor{3}{rgb}{1,1,0}
\definecolor{4}{rgb}{255,255,255}
\def\ra{.7}
\newtheorem{theorem}{Theorem}[section]
\newtheorem{corollary}{Corollary}[theorem]
\newtheorem{lemma}[theorem]{Lemma}
\theoremstyle{definition}
\newtheorem{definition}{Definition}[section]
 \newdimen\R
\theoremstyle{remark}
\newtheorem{remark}{Remark}
\newcommand{\hardholesOne}[1]{

	\ifodd #1
		\pgfmathsetmacro{\a}{#1/2-1/2}
	\else
		\pgfmathsetmacro{\a}{#1/2-1}
	\fi
		
	\foreach \i in {1,...,\a}{
		\foreach \j in {1,...,\a}{
		\fill[newcol!40] (\i*2*\ra-\ra,\j*2*\ra-\ra) --++(0:\ra) --++(90:\ra) --++(180:\ra) -- cycle;
		}
	}
}
\newcommand{\hardholesTwo}[2]{

		\pgfmathsetmacro{\a}{#1/2-3/2}
        \pgfmathsetmacro{\b}{#2/2-3/2}
        \foreach \i in {1,...,\b}{
		    \foreach \j in {1,...,\a}{
		        \fill[newcol!40] (\i*2*\ra,\j*2*\ra) --++(0:\ra) --++(90:\ra) --++(180:\ra) -- cycle;
		    }
	    }

}
\newcommand{\fullborder}[2]{

	\foreach \i in {2,...,#1}{
		\fill[black!80] (0,\i*\ra-\ra) --++(0:\ra) --++(90:\ra) --++(180:\ra) -- cycle;
		\fill[black!80] (#2*\ra-\ra,\i*\ra-\ra) --++(0:\ra) --++(90:\ra) --++(180:\ra) -- cycle;
		}
		
	\foreach \i in {2,...,#2}{
		\fill[black!80] (\i*\ra-2*\ra,0) --++(0:\ra) --++(90:\ra) --++(180:\ra) -- cycle;
		}
	\foreach \i in {3,...,#2}{
		\fill[black!80] (\i*\ra-2*\ra,#1*\ra-\ra) --++(0:\ra) --++(90:\ra) --++(180:\ra) -- cycle;
		}
}
\newcommand{\fullborderLines}[2]{
	
	\foreach \i in {2,...,#1}{
		\draw[white] (0,\i*\ra-\ra) --++(0:\ra) --++(90:\ra) --++(180:\ra) --++(270:\ra);
		\draw[white] (#2*\ra-\ra,\i*\ra-\ra) --++(0:\ra) --++(90:\ra) --++(180:\ra) --++(270:\ra);
		}
		
	\foreach \i in {2,...,#2}{
		\draw[white] (\i*\ra-2*\ra,0) --++(0:\ra) --++(90:\ra) --++(180:\ra) --++(270:\ra);
		}
	\foreach \i in {3,...,#2}{
		\draw[white] (\i*\ra-2*\ra,#1*\ra-\ra) --++(0:\ra) --++(90:\ra) --++(180:\ra) --++(270:\ra);
		}
}
\newcommand{\snakeborder}[2]{
	
	\pgfmathsetmacro{\a}{#1-1}
	\pgfmathsetmacro{\b}{#2-1}
	
	\foreach \i in {1,...,\a}{
		\fill[black!80] (0,\i*\ra) --++(0:\ra) --++(90:\ra) --++(180:\ra) -- cycle;
		}	
	
	\foreach \k in {2,...,\a}{
		\fill[black!80] (\b*\ra,\k*\ra-\ra) --++(0:\ra) --++(90:\ra) --++(180:\ra) -- cycle;
		}
		
	\foreach \j in {2,...,\b}{
		\fill[black!80] (\j*\ra-\ra,0) --++(0:\ra) --++(90:\ra) --++(180:\ra) -- cycle;
		\fill[black!80] (\j*\ra-\ra,\a*\ra) --++(0:\ra) --++(90:\ra) --++(180:\ra) -- cycle;
		}

}
\newcommand{\snakeborderLines}[2]{
	
	\pgfmathsetmacro{\a}{#1-1}
	\pgfmathsetmacro{\b}{#2-1}
	
	\foreach \i in {1,...,\a}{
		\draw[white] (0,\i*\ra) --++(0:\ra) --++(90:\ra) --++(180:\ra) --++(270:\ra);
		}	
	
	\foreach \k in {2,...,\a}{
		\draw[white] (\b*\ra,\k*\ra-\ra) --++(0:\ra) --++(90:\ra) --++(180:\ra) --++(270:\ra);
		}
		
	\foreach \j in {2,...,\b}{
		\draw[white] (\j*\ra-\ra,0) --++(0:\ra) --++(90:\ra) --++(180:\ra) --++(270:\ra);
		\draw[white] (\j*\ra-\ra,\a*\ra) --++(0:\ra) --++(90:\ra) --++(180:\ra) --++(270:\ra);
		}

}
\newcommand{\gardenborder}[2]{
	
	\pgfmathsetmacro{\a}{#1-2}
	\pgfmathsetmacro{\b}{#2-2}
	
	\foreach \i in {1,...,\a}{
		\fill[black!80] (0,\i*\ra) --++(0:\ra) --++(90:\ra) --++(180:\ra) -- cycle;
		\fill[black!80] (\b*\ra+\ra,\i*\ra) --++(0:\ra) --++(90:\ra) --++(180:\ra) -- cycle;
		}
		
	\foreach \j in {1,...,\b}{
		\fill[black!80] (\j*\ra,0) --++(0:\ra) --++(90:\ra) --++(180:\ra) -- cycle;
		\fill[black!80] (\j*\ra,\a*\ra+\ra) --++(0:\ra) --++(90:\ra) --++(180:\ra) -- cycle;
		}

}
\newcommand{\gardenborderLines}[2]{
	
	\pgfmathsetmacro{\a}{#1-2}
	\pgfmathsetmacro{\b}{#2-2}
	
	\foreach \i in {1,...,\a}{
		\draw[white] (0,\i*\ra) --++(0:\ra) --++(90:\ra) --++(180:\ra) --++(270:\ra);
		\draw[white] (\b*\ra+\ra,\i*\ra) --++(0:\ra) --++(90:\ra) --++(180:\ra) --++(270:\ra);
		}
		
	\foreach \j in {1,...,\b}{
		\draw[white] (\j*\ra,0) --++(0:\ra) --++(90:\ra) --++(180:\ra) --++(270:\ra);
		\draw[white] (\j*\ra,\a*\ra+\ra) --++(0:\ra) --++(90:\ra) --++(180:\ra) --++(270:\ra);
		}

}
\newcommand{\spiralborder}[2]{
	
	\pgfmathsetmacro{\a}{#1-1}
	\pgfmathsetmacro{\b}{#2-2}
	
	\foreach \i in {1,...,\a}{
		\fill[black!80] (0,\i*\ra) --++(0:\ra) --++(90:\ra) --++(180:\ra) -- cycle;
		\fill[black!80] (\b*\ra+\ra,\i*\ra) --++(0:\ra) --++(90:\ra) --++(180:\ra) -- cycle;
		}
		
	\foreach \j in {1,...,\b}{
		\fill[black!80] (\j*\ra,\a*\ra) --++(0:\ra) --++(90:\ra) --++(180:\ra) -- cycle;
		}
	\foreach \j in {4,...,\b}{
		\fill[black!80] (\j*\ra-\ra,0) --++(0:\ra) --++(90:\ra) --++(180:\ra) -- cycle;
		}
	\fill[black!80] (2*\ra,\ra) --++(0:\ra) --++(90:\ra) --++(180:\ra) -- cycle;

}
\newcommand{\spiralborderLines}[2]{
	
	\pgfmathsetmacro{\a}{#1-1}
	\pgfmathsetmacro{\b}{#2-2}
	
	\foreach \i in {1,...,\a}{
		\draw[white] (0,\i*\ra) --++(0:\ra) --++(90:\ra) --++(180:\ra) --++(270:\ra);
		\draw[white] (\b*\ra+\ra,\i*\ra) --++(0:\ra) --++(90:\ra) --++(180:\ra) --++(270:\ra);
		}
		
	\foreach \j in {1,...,\b}{
		\draw[white] (\j*\ra,\a*\ra) --++(0:\ra) --++(90:\ra) --++(180:\ra) --++(270:\ra);
		}
	\foreach \j in {4,...,\b}{
		\draw[white] (\j*\ra-\ra,0) --++(0:\ra) --++(90:\ra) --++(180:\ra) --++(270:\ra);
		}
	\draw[white] (2*\ra,\ra) --++(0:\ra) --++(90:\ra) --++(180:\ra) --++(270:\ra);

}
\newcommand{\wtopcheck}[2]{

	\ifodd #1
		\pgfmathsetmacro{\x}{#1/2-3/2}
		\pgfmathsetmacro{\a}{#1/2-1/2}
	\else
		\pgfmathsetmacro{\x}{#1/2-1}
		\pgfmathsetmacro{\a}{#1/2-1}		
	\fi
	
	\ifodd #2
		\pgfmathsetmacro{\y}{#2/2-1/2}
		\pgfmathsetmacro{\b}{#2/2-1}	
	\else
		\pgfmathsetmacro{\y}{#2/2-1}	
		\pgfmathsetmacro{\b}{#2/2-1}	
	\fi
		\foreach \i in {1,...,\x}{
			\foreach \j in {1,...,\y}{
				\fill[black!80] (\j*2*\ra-\ra,#1*\ra-\ra-\i*2*\ra) --++(0:\ra) --++(90:\ra) --++(180:\ra) -- cycle;
			}
		}
		
		\foreach \i in {1,...,\a}{
			\foreach \j in {1,...,\b}{
				\fill[black!80] (\j*2*\ra,#1*\ra-\i*2*\ra) --++(0:\ra) --++(90:\ra) --++(180:\ra) -- cycle;
			}
		}
}
\newcommand{\wtopcheckLines}[2]{

	\ifodd #1
		\pgfmathsetmacro{\x}{#1/2-3/2}
		\pgfmathsetmacro{\a}{#1/2-1/2}
	\else
		\pgfmathsetmacro{\x}{#1/2-1}
		\pgfmathsetmacro{\a}{#1/2-1}		
	\fi
	
	\ifodd #2
		\pgfmathsetmacro{\y}{#2/2-1/2}
		\pgfmathsetmacro{\b}{#2/2-1}	
	\else
		\pgfmathsetmacro{\y}{#2/2-1}	
		\pgfmathsetmacro{\b}{#2/2-1}	
	\fi
		\foreach \i in {1,...,\x}{
			\foreach \j in {1,...,\y}{
				\draw[white] (\j*2*\ra-\ra,#1*\ra-\ra-\i*2*\ra) --++(0:\ra) --++(90:\ra) --++(180:\ra) --++(270:\ra);
			}
		}
		
		\foreach \i in {1,...,\a}{
			\foreach \j in {1,...,\b}{
				\draw[white] (\j*2*\ra,#1*\ra-\i*2*\ra) --++(0:\ra) --++(90:\ra) --++(180:\ra) --++(270:\ra);
			}
		}
}
\newcommand{\gtopcheck}[2]{

	\ifodd #1
		\pgfmathsetmacro{\x}{#1/2-3/2}
		\pgfmathsetmacro{\a}{#1/2-1/2}
	\else
		\pgfmathsetmacro{\x}{#1/2-1}
		\pgfmathsetmacro{\a}{#1/2-1}		
	\fi
	
	\ifodd #2
		\pgfmathsetmacro{\y}{#2/2-1/2}
		\pgfmathsetmacro{\b}{#2/2-1}	
	\else
		\pgfmathsetmacro{\y}{#2/2-1}	
		\pgfmathsetmacro{\b}{#2/2-1}	
	\fi
		\foreach \i in {1,...,\x}{
			\foreach \j in {1,...,\y}{
				\fill[newcol] (\j*2*\ra-\ra,#1*\ra-\ra-\i*2*\ra) --++(0:\ra) --++(90:\ra) --++(180:\ra) -- cycle;
			}
		}
		
		\foreach \i in {1,...,\a}{
			\foreach \j in {1,...,\b}{
				\fill[newcol] (\j*2*\ra,#1*\ra-\i*2*\ra) --++(0:\ra) --++(90:\ra) --++(180:\ra) -- cycle;
			}
		}
}
\newcommand{\uptrees}[2]{

	\ifodd #1
		\pgfmathsetmacro{\y}{#1/2+1/2}
		\pgfmathsetmacro{\x}{#1/2-1/2}
	\else
		\pgfmathsetmacro{\y}{#1/2}
		\pgfmathsetmacro{\x}{#1/2}
	\fi
	
	\foreach \i in {1,...,\y}{
			\foreach \j in {1,...,#2}{
				\fill[black!80] (\i*6*\ra-3*\ra,2*\j*\ra) --++(0:\ra) --++(90:\ra) --++(180:\ra) -- cycle;
				}
		
			}
	
	\foreach \i in {1,...,\x}{
		\foreach \j in {1,...,#2}{
			\fill[black!80] (\i*6*\ra,2*\j*\ra+\ra) --++(0:\ra) --++(90:\ra) --++(180:\ra) -- cycle;
			}
		
		}	
		
}
\newcommand{\Swl}[2]{

	    \pgfmathsetmacro{\w}{4+2*#2}
	    \pgfmathsetmacro{\z}{4+3*#1}
	    \snakeborder{\w}{\z}

    \wtopcheck{\w}{\z}
    \uptrees{#1}{#2}
    
    \snakeborderLines{\w}{\z}
    \wtopcheckLines{\w}{\z}
}
\newcommand{\coiledsnake}[1]{
	
	\pgfmathsetmacro{\w}{4+3*#1}
	\pgfmathsetmacro{\z}{3+3*#1}
	
	\snakeborder{\w}{\z}
	\wtopcheck{\w}{\z}
	
\ifnum #1=1
	\fill[black!80] (3*\ra,3*\ra) --++(0:\ra) --++(90:\ra) --++(180:\ra) -- cycle;	
		
\else	
	\ifodd #1
		\pgfmathsetmacro{\f}{#1/2+1/2}
		\pgfmathsetmacro{\g}{#1/2-1/2}
	\else
		\pgfmathsetmacro{\f}{#1/2}
		\pgfmathsetmacro{\g}{#1/2}	
	\fi
	
	\foreach \j in {1,...,\f}{
		\pgfmathsetmacro{\l}{3*\j-2}
		\foreach \k in {1,...,\l}{
				\fill[black!80] (\z*\ra-\ra-2*\k*\ra,6*\j*\ra-3*\ra) --++(0:\ra) --++(90:\ra) --++(180:\ra) -- cycle;

		}
		\ifnum \j=1
		\else
			\foreach \k in {2,...,\l}{
			\fill[black!80] (\z*\ra-\ra-2*\l*\ra,6*\j*\ra-\ra-2*\k*\ra) --++(0:\ra) --++(90:\ra) --++(180:\ra) -- cycle;
			}
		\fi
	}

	\foreach \j in {1,...,\g}{
		\pgfmathsetmacro{\l}{3*\j}
		\foreach \k in {1,...,\l}{
				\fill[black!80] (\z*\ra-6*\j*\ra,2*\k*\ra) --++(0:\ra) --++(90:\ra) --++(180:\ra) -- cycle;

		}
		\foreach \k in {3,...,\l}{
		\fill[black!80] (\z*\ra-6*\j*\ra+2*\k*\ra-4*\ra,2*\l*\ra) --++(0:\ra) --++(90:\ra) --++(180:\ra) -- cycle;
		}
	}	
\fi	

	\snakeborderLines{\w}{\z}
	\wtopcheckLines{\w}{\z}

}
\newcommand{\coiledsnakeTwo}[1]{
	
	\pgfmathsetmacro{\w}{4+3*#1}
	\pgfmathsetmacro{\z}{5+3*#1}
	
	\snakeborder{\w}{\z}
	\wtopcheck{\w}{\z}
	
\ifnum #1=1
	\fill[black!80] (3*\ra,3*\ra) --++(0:\ra) --++(90:\ra) --++(180:\ra) -- cycle;
	\fill[black!80] (5*\ra,3*\ra) --++(0:\ra) --++(90:\ra) --++(180:\ra) -- cycle;	
		
\else	
	\ifodd #1
		\pgfmathsetmacro{\f}{#1/2+1/2}
		\pgfmathsetmacro{\g}{#1/2-1/2}
	\else
		\pgfmathsetmacro{\f}{#1/2}
		\pgfmathsetmacro{\g}{#1/2}	
	\fi
	
	\foreach \j in {1,...,\f}{
		\pgfmathsetmacro{\l}{3*\j-1}
		\foreach \k in {1,...,\l}{
				\fill[black!80] (\z*\ra-\ra-2*\k*\ra,6*\j*\ra-3*\ra) --++(0:\ra) --++(90:\ra) --++(180:\ra) -- cycle;

		}
		\ifnum \j=1
		\else
			\foreach \k in {3,...,\l}{
			\fill[black!80] (\z*\ra-\ra-2*\l*\ra,6*\j*\ra-\ra-2*\k*\ra+2*\ra) --++(0:\ra) --++(90:\ra) --++(180:\ra) -- cycle;
			}
		\fi
	}

	\foreach \j in {1,...,\g}{
		\pgfmathsetmacro{\l}{3*\j}
		\foreach \k in {1,...,\l}{
				\fill[black!80] (\z*\ra-6*\j*\ra-2*\ra,2*\k*\ra) --++(0:\ra) --++(90:\ra) --++(180:\ra) -- cycle;

		}
		\foreach \k in {2,...,\l}{
		\fill[black!80] (\z*\ra-6*\j*\ra+2*\k*\ra-4*\ra,2*\l*\ra) --++(0:\ra) --++(90:\ra) --++(180:\ra) -- cycle;
		}
	}	
\fi	

	\snakeborderLines{\w}{\z}
	\wtopcheckLines{\w}{\z}

}
\newcommand{\EvenSqZero}[1]{

	\gardenborder{#1}{#1}
	\wtopcheck{#1}{#1}

	\foreach \i in {1,...,4}{
		\fill[black!80] (8*\ra,#1*\ra-2*\i*\ra-\ra) --++(0:\ra) --++(90:\ra) --++(180:\ra) -- cycle;
	}
	\foreach \i in {1,...,3}{
		\fill[black!80] (2*\i*\ra,#1*\ra-9*\ra) --++(0:\ra) --++(90:\ra) --++(180:\ra) -- cycle;
	}
	\fill[black!80] (2*\ra,#1*\ra-5*\ra) --++(0:\ra) --++(90:\ra) --++(180:\ra) -- cycle;
	\fill[black!80] (4*\ra,#1*\ra-3*\ra) --++(0:\ra) --++(90:\ra) --++(180:\ra) -- cycle;
	\fill[black!80] (6*\ra,#1*\ra-5*\ra) --++(0:\ra) --++(90:\ra) --++(180:\ra) -- cycle;
	\fill[black!80] (4*\ra,#1*\ra-7*\ra) --++(0:\ra) --++(90:\ra) --++(180:\ra) -- cycle;

    \ifnum #1=12

    \else
    	\pgfmathsetmacro{\a}{#1/2-2}
    	\pgfmathsetmacro{\b}{#1/6-2}

	    \foreach \i in {1,...,\b}{
		    \foreach \l in {1,...,\a}{
			    \fill[black!80] (#1*\ra-6*\i*\ra+2*\ra,#1*\ra-\ra-2*\l*\ra) --++(0:\ra) --++(90:\ra) --++(180:\ra) -- cycle;
		    	\fill[black!80] (#1*\ra-6*\i*\ra-\ra,2*\l*\ra) --++(0:\ra) --++(90:\ra) --++(180:\ra) -- cycle;
		    }
	
	    	\foreach \j in {1,...,4}{
		    	\fill[black!80] (2*\j*\ra,6*\i*\ra-3*\ra) --++(0:\ra) --++(90:\ra) --++(180:\ra) -- cycle;
			    \fill[black!80] (2*\j*\ra+\ra,6*\i*\ra) --++(0:\ra) --++(90:\ra) --++(180:\ra) -- cycle;
	    	}
    	}

    \fi	

	\gardenborderLines{#1}{#1}
	\wtopcheckLines{#1}{#1}

}
\newcommand{\EvenSqTwo}[1]{

	\gardenborder{#1}{#1}
	\wtopcheck{#1}{#1}

	\fill[black!80] (2*\ra,#1*\ra-5*\ra) --++(0:\ra) --++(90:\ra) --++(180:\ra) -- cycle;
	\fill[black!80] (4*\ra,#1*\ra-5*\ra) --++(0:\ra) --++(90:\ra) --++(180:\ra) -- cycle;
	\fill[black!80] (4*\ra,#1*\ra-3*\ra) --++(0:\ra) --++(90:\ra) --++(180:\ra) -- cycle;

	\ifnum #1=8
	
	\else

		\pgfmathsetmacro{\a}{#1/2-2}
		\pgfmathsetmacro{\b}{#1/6-2/6-1}

		\foreach \i in {1,...,\b}{
			\foreach \l in {1,...,\a}{
				\fill[black!80] (#1*\ra-6*\i*\ra+2*\ra,#1*\ra-\ra-2*\l*\ra) --++(0:\ra) --++(90:\ra) --++(180:\ra) -- cycle;
				\fill[black!80] (#1*\ra-6*\i*\ra-\ra,2*\l*\ra) --++(0:\ra) --++(90:\ra) --++(180:\ra) -- cycle;
			}
	
			\foreach \j in {1,2}{
				\fill[black!80] (2*\j*\ra,6*\i*\ra-3*\ra) --++(0:\ra) --++(90:\ra) --++(180:\ra) -- cycle;
				\fill[black!80] (2*\j*\ra+\ra,6*\i*\ra) --++(0:\ra) --++(90:\ra) --++(180:\ra) -- cycle;
			}
		}
	\fi

	\gardenborderLines{#1}{#1}
	\wtopcheckLines{#1}{#1}

}
\newcommand{\twospirals}[1]{

\pgfmathsetmacro{\Nplus}{#1+1}

	\ifodd #1 
		\spiralborder{\Nplus}{#1}
		\wtopcheck{\Nplus}{#1}
		\pgfmathsetmacro{\a}{#1/2-5/2} 
		\pgfmathsetmacro{\b}{#1/6-5/6}
	
		\foreach \j in {1,...,\b}{
			\foreach \i in {1,2}{
				\fill[black!80] (2*\ra+3*\j*\ra,2*\i*\ra+3*\j*\ra-3*\ra) --++(0:\ra) --++(90:\ra) --++(180:\ra) -- cycle;
			}
			\pgfmathsetmacro{\x}{\a+3-3*\j}	
			\foreach \i in {1,...,\x}{
				\fill[black!80] (3*\j*\ra,2*\i*\ra+3*\j*\ra-\ra) --++(0:\ra) --++(90:\ra) --++(180:\ra) -- cycle;
			}
			
			\foreach \i in {2,...,\x}{
				\fill[black!80] (3*\j*\ra+2*\i*\ra-2*\ra,2*\a*\ra+5*\ra-3*\j*\ra) --++(0:\ra) --++(90:\ra) --++(180:\ra) -- cycle;
			}
			
			\foreach \i in {3,...,\x}{
				\fill[black!80] (#1*\ra-\ra-3*\j*\ra,#1*\ra-3*\j*\ra-2*\i*\ra+4*\ra) --++(0:\ra) --++(90:\ra) --++(180:\ra) -- cycle;
				\fill[black!80] (#1*\ra+6*\ra-3*\j*\ra-2*\i*\ra,3*\j*\ra) --++(0:\ra) --++(90:\ra) --++(180:\ra) -- cycle;
			}			

		}

		\spiralborderLines{\Nplus}{#1}
		\wtopcheckLines{\Nplus}{#1}

	\else
		\spiralborder{#1}{\Nplus}
		\wtopcheck{#1}{\Nplus}
	
		\pgfmathsetmacro{\a}{#1/2-3} 
		\pgfmathsetmacro{\b}{#1/6-2/6-1} 

		\fill[black!80] (#1/2*\ra-\ra,#1/2*\ra) --++(0:\ra) --++(90:\ra) --++(180:\ra) -- cycle;

		\foreach \j in {0,...,\b}{
			\foreach \i in {1,2}{
				\fill[black!80] (5*\ra+3*\j*\ra,2*\i*\ra+3*\j*\ra) --++(0:\ra) --++(90:\ra) --++(180:\ra) -- cycle;
			}
		
		}
		\foreach \j in {1,...,\b}{
			\pgfmathsetmacro{\x}{\a+3-3*\j}	
			\foreach \i in {1,...,\x}{
				\fill[black!80] (3*\j*\ra,2*\i*\ra+3*\j*\ra-\ra) --++(0:\ra) --++(90:\ra) --++(180:\ra) -- cycle;
				\fill[black!80] (3*\j*\ra+2*\i*\ra,2*\a*\ra+5*\ra-3*\j*\ra) --++(0:\ra) --++(90:\ra) --++(180:\ra) -- cycle;
			}
			\foreach \i in {3,...,\x}{
				\fill[black!80] (6*\ra-3*\j*\ra+2*\a*\ra,2*\a*\ra-2*\i*\ra+9*\ra-3*\j*\ra) --++(0:\ra) --++(90:\ra) --++(180:\ra) -- cycle;
			}
			\foreach \i in {2,...,\x}{
				\fill[black!80] (#1*\ra+5*\ra-3*\j*\ra-2*\i*\ra,3*\j*\ra) --++(0:\ra) --++(90:\ra) --++(180:\ra) -- cycle;
			}
		}
		\spiralborderLines{#1}{\Nplus}
		\wtopcheckLines{#1}{\Nplus}

	\fi

}
\newcommand{\twospiralsNoB}[1]{

\pgfmathsetmacro{\Nplus}{#1+1}

	\ifodd #1 
		\wtopcheck{\Nplus}{#1}
		\pgfmathsetmacro{\a}{#1/2-5/2} 
		\pgfmathsetmacro{\b}{#1/6-5/6}
	
		\foreach \j in {1,...,\b}{
			\foreach \i in {1,2}{
				\fill[black!80] (2*\ra+3*\j*\ra,2*\i*\ra+3*\j*\ra-3*\ra) --++(0:\ra) --++(90:\ra) --++(180:\ra) -- cycle;
			}
			\pgfmathsetmacro{\x}{\a+3-3*\j}	
			\foreach \i in {1,...,\x}{
				\fill[black!80] (3*\j*\ra,2*\i*\ra+3*\j*\ra-\ra) --++(0:\ra) --++(90:\ra) --++(180:\ra) -- cycle;
			}
			
			\foreach \i in {2,...,\x}{
				\fill[black!80] (3*\j*\ra+2*\i*\ra-2*\ra,2*\a*\ra+5*\ra-3*\j*\ra) --++(0:\ra) --++(90:\ra) --++(180:\ra) -- cycle;
			}
			
			\foreach \i in {3,...,\x}{
				\fill[black!80] (#1*\ra-\ra-3*\j*\ra,#1*\ra-3*\j*\ra-2*\i*\ra+4*\ra) --++(0:\ra) --++(90:\ra) --++(180:\ra) -- cycle;
				\fill[black!80] (#1*\ra+6*\ra-3*\j*\ra-2*\i*\ra,3*\j*\ra) --++(0:\ra) --++(90:\ra) --++(180:\ra) -- cycle;
			}			

		}

		\wtopcheckLines{\Nplus}{#1}

	\else
		\wtopcheck{#1}{\Nplus}
	
		\pgfmathsetmacro{\a}{#1/2-3} 
		\pgfmathsetmacro{\b}{#1/6-2/6-1} 

		\fill[black!80] (#1/2*\ra-\ra,#1/2*\ra) --++(0:\ra) --++(90:\ra) --++(180:\ra) -- cycle;

		\foreach \j in {0,...,\b}{
			\foreach \i in {1,2}{
				\fill[black!80] (5*\ra+3*\j*\ra,2*\i*\ra+3*\j*\ra) --++(0:\ra) --++(90:\ra) --++(180:\ra) -- cycle;
			}
		
		}
		\foreach \j in {1,...,\b}{
			\pgfmathsetmacro{\x}{\a+3-3*\j}	
			\foreach \i in {1,...,\x}{
				\fill[black!80] (3*\j*\ra,2*\i*\ra+3*\j*\ra-\ra) --++(0:\ra) --++(90:\ra) --++(180:\ra) -- cycle;
				\fill[black!80] (3*\j*\ra+2*\i*\ra,2*\a*\ra+5*\ra-3*\j*\ra) --++(0:\ra) --++(90:\ra) --++(180:\ra) -- cycle;
			}
			\foreach \i in {3,...,\x}{
				\fill[black!80] (6*\ra-3*\j*\ra+2*\a*\ra,2*\a*\ra-2*\i*\ra+9*\ra-3*\j*\ra) --++(0:\ra) --++(90:\ra) --++(180:\ra) -- cycle;
			}
			\foreach \i in {2,...,\x}{
				\fill[black!80] (#1*\ra+5*\ra-3*\j*\ra-2*\i*\ra,3*\j*\ra) --++(0:\ra) --++(90:\ra) --++(180:\ra) -- cycle;
			}
		}
		\wtopcheckLines{#1}{\Nplus}

	\fi

}
\newcommand{\rearrangeZero}[1]{
              
	\pgfmathsetmacro{\c}{#1-3}
	\pgfmathsetmacro{\e}{#1/6-15/6}
		
		\foreach \i in {2,...,\c}{
			\fill[black!80] (0,\i*\ra) --++(0:\ra) --++(90:\ra) --++(180:\ra) -- cycle;
		}
		
		\foreach \i in {2,...,\c}{
			\fill[black!80] (\i*\ra,#1*\ra-\ra) --++(0:\ra) --++(90:\ra) --++(180:\ra) -- cycle;
		}
		
		\fill[black!80] (#1*\ra-3*\ra,#1*\ra-3*\ra) --++(0:\ra) --++(90:\ra) --++(180:\ra) -- cycle;	
			
		\foreach \i in {4,...,\c}{
			\fill[black!80] (#1*\ra-\ra,\i*\ra) --++(0:\ra) --++(90:\ra) --++(180:\ra) -- cycle;
		}
		
		\fill[black!80] (#1*\ra-2*\ra,3*\ra) --++(0:\ra) --++(90:\ra) --++(180:\ra) -- cycle;
		
		\foreach \i in {4,...,\c}{
			\fill[black!80] (\i*\ra,0) --++(0:\ra) --++(90:\ra) --++(180:\ra) -- cycle;
		}
		\fill[black!80] (\ra,\ra) --++(0:\ra) --++(90:\ra) --++(180:\ra) -- cycle;
		\fill[black!80] (3*\ra,\ra) --++(0:\ra) --++(90:\ra) --++(180:\ra) -- cycle;		

		\fill[black!80] (#1/2*\ra+\ra/2,#1/2*\ra-3/2*\ra) --++(0:\ra) --++(90:\ra) --++(180:\ra) -- cycle;
		\fill[black!80] (#1/2*\ra+3*\ra/2,#1/2*\ra-9/2*\ra) --++(0:\ra) --++(90:\ra) --++(180:\ra) -- cycle;

		\wtopcheck{#1}{#1}
		
		\pgfmathsetmacro{\d}{#1/2-5/2}
					
			\foreach \i in {1,...,\d} {
				\fill[black!80] (3*\ra,2*\i*\ra+\ra) --++(0:\ra) --++(90:\ra) --++(180:\ra) -- cycle;
			}
			\foreach \i in {3,...,\d} {
				\fill[black!80] (2*\i*\ra+3*\ra-4*\ra,2*\d*\ra+\ra) --++(0:\ra) --++(90:\ra) --++(180:\ra) -- cycle;
			}
			\fill[black!80] (#1*\ra-6*\ra,2*\d*\ra-\ra) --++(0:\ra) --++(90:\ra) --++(180:\ra) -- cycle;
			\foreach \i in {2,...,\d} {
				\fill[black!80] (#1*\ra-4*\ra,2*\i*\ra-\ra) --++(0:\ra) --++(90:\ra) --++(180:\ra) -- cycle;
			}
			\foreach \i in {2,...,\d}{
				\fill[black!80] (6*\ra,2*\i*\ra-2*\ra) --++(0:\ra) --++(90:\ra) --++(180:\ra) -- cycle;
			}
			
		\ifnum #1 = 15
		\else
			\foreach \j in {1,...,\e}	{
				\pgfmathsetmacro{\d}{3*\e+3-3*\j}
				\foreach \i in {1,...,\d}{
					\fill[black!80] (3*\ra+3*\j*\ra+2*\i*\ra,#1*\ra-4*\ra-3*\j*\ra) --++(0:\ra) --++(90:\ra) --++(180:\ra) -- cycle;
				}
			}
			\foreach \j in {1,...,\e}	{
				\pgfmathsetmacro{\d}{3*\e+4-3*\j}
					\foreach \i in {1,...,\d}{
						\fill[black!80] (6*\ra+3*\j*\ra,3*\j*\ra+2*\i*\ra) --++(0:\ra) --++(90:\ra) --++(180:\ra) -- cycle;
					}
			}
			
			\foreach \j in {1,...,\e}	{
				\fill[black!80] (#1*\ra-6*\ra-3*\j*\ra,#1*\ra-4*\ra-3*\j*\ra-2*\ra) --++(0:\ra) --++(90:\ra) --++(180:\ra) -- cycle;
				\pgfmathsetmacro{\d}{3*\e+4-3*\j}
					\foreach \i in {1,...,\d}{
						\fill[black!80] (#1*\ra-4*\ra-3*\j*\ra,#1*\ra-4*\ra-3*\j*\ra-2*\i*\ra) --++(0:\ra) --++(90:\ra) --++(180:\ra) -- cycle;
					}
			}
			\foreach \j in {1,...,\e}	{
				\pgfmathsetmacro{\d}{3*\e+4-3*\j}
					\foreach \i in {1,...,\d}{
						\fill[black!80] (4*\ra+3*\j*\ra+2*\i*\ra,3*\j*\ra) --++(0:\ra) --++(90:\ra) --++(180:\ra) -- cycle;
					}
			}
			
		\fi
		\foreach \i in {2,...,\c}{
			\draw[white] (0,\i*\ra) --++(0:\ra) --++(90:\ra) --++(180:\ra) --++(270:\ra);
		}
		
		\foreach \i in {2,...,\c}{
			\draw[white] (\i*\ra,#1*\ra-\ra) --++(0:\ra) --++(90:\ra) --++(180:\ra) --++(270:\ra);
		}
		
		\draw[white] (#1*\ra-3*\ra,#1*\ra-3*\ra) --++(0:\ra) --++(90:\ra) --++(180:\ra) --++(270:\ra);
				
		\foreach \i in {4,...,\c}{
			\draw[white] (#1*\ra-\ra,\i*\ra) --++(0:\ra) --++(90:\ra) --++(180:\ra) --++(270:\ra);
		}
		
		\draw[white] (#1*\ra-2*\ra,3*\ra) --++(0:\ra) --++(90:\ra) --++(180:\ra) --++(270:\ra);	
		
		\foreach \i in {4,...,\c}{
			\draw[white] (\i*\ra,0) --++(0:\ra) --++(90:\ra) --++(180:\ra) --++(270:\ra);
		}
		
		\draw[white] (\ra,\ra) --++(0:\ra) --++(90:\ra) --++(180:\ra) --++(270:\ra);
		\draw[white] (3*\ra,\ra) --++(0:\ra) --++(90:\ra) --++(180:\ra) --++(270:\ra);		
	
		\wtopcheckLines{#1}{#1}

}
\newcommand{\rearrangeOne}[1]{
              
	\pgfmathsetmacro{\c}{#1-3}
	\pgfmathsetmacro{\e}{#1/6-13/6}
		
		\foreach \i in {2,...,\c}{
			\fill[black!80] (0,\i*\ra) --++(0:\ra) --++(90:\ra) --++(180:\ra) -- cycle;
		}
		
		\foreach \i in {2,...,\c}{
			\fill[black!80] (\i*\ra,#1*\ra-\ra) --++(0:\ra) --++(90:\ra) --++(180:\ra) -- cycle;
		}
		
		\fill[black!80] (#1*\ra-2*\ra,#1*\ra-2*\ra) --++(0:\ra) --++(90:\ra) --++(180:\ra) -- cycle;
		
		\foreach \i in {4,...,\c}{
			\fill[black!80] (#1*\ra-\ra,\i*\ra) --++(0:\ra) --++(90:\ra) --++(180:\ra) -- cycle;
		}
		
		\fill[black!80] (#1*\ra-2*\ra,3*\ra) --++(0:\ra) --++(90:\ra) --++(180:\ra) -- cycle;
		
		\foreach \i in {4,...,\c}{
			\fill[black!80] (\i*\ra,0) --++(0:\ra) --++(90:\ra) --++(180:\ra) -- cycle;
		}
		\fill[black!80] (\ra,\ra) --++(0:\ra) --++(90:\ra) --++(180:\ra) -- cycle;
		\fill[black!80] (3*\ra,\ra) --++(0:\ra) --++(90:\ra) --++(180:\ra) -- cycle;		

		\wtopcheck{#1}{#1}
		
		\pgfmathsetmacro{\d}{#1/2-5/2}
			\foreach \i in {1,...,\d} {
				\fill[black!80] (3*\ra,2*\i*\ra+\ra) --++(0:\ra) --++(90:\ra) --++(180:\ra) -- cycle;
			}
			\foreach \i in {2,...,\d} {
				\fill[black!80] (2*\i*\ra+3*\ra-2*\ra,2*\d*\ra+\ra) --++(0:\ra) --++(90:\ra) --++(180:\ra) -- cycle;
			}
			\foreach \i in {1,...,\d} {
				\fill[black!80] (#1*\ra-4*\ra,2*\i*\ra+\ra) --++(0:\ra) --++(90:\ra) --++(180:\ra) -- cycle;
			}
			\foreach \i in {2,...,\d}{
				\fill[black!80] (6*\ra,2*\i*\ra-2*\ra) --++(0:\ra) --++(90:\ra) --++(180:\ra) -- cycle;
			}
			
		\ifnum #1 = 13
		\else
			\foreach \j in {1,...,\e}	{
				\pgfmathsetmacro{\d}{#1/2-7/2-3*\j}
				\foreach \i in {1,...,\d}{
					\fill[black!80] (3*\ra+3*\j*\ra+2*\i*\ra,#1*\ra-4*\ra-3*\j*\ra) --++(0:\ra) --++(90:\ra) --++(180:\ra) -- cycle;
				}
			}
			\foreach \j in {1,...,\e}	{
				\pgfmathsetmacro{\d}{3*\e+3-3*\j}
					\foreach \i in {1,...,\d}{
						\fill[black!80] (6*\ra+3*\j*\ra,3*\j*\ra+2*\i*\ra) --++(0:\ra) --++(90:\ra) --++(180:\ra) -- cycle;
					}
			}
			
			\foreach \j in {1,...,\e}	{
				\pgfmathsetmacro{\d}{3*\e+3-3*\j}
					\foreach \i in {1,...,\d}{
						\fill[black!80] (#1*\ra-4*\ra-3*\j*\ra,#1*\ra-4*\ra-3*\j*\ra-2*\i*\ra) --++(0:\ra) --++(90:\ra) --++(180:\ra) -- cycle;
					}
			}
			\foreach \j in {1,...,\e}	{
				\pgfmathsetmacro{\d}{3*\e+3-3*\j}
					\foreach \i in {1,...,\d}{
						\fill[black!80] (4*\ra+3*\j*\ra+2*\i*\ra,3*\j*\ra) --++(0:\ra) --++(90:\ra) --++(180:\ra) -- cycle;
					}
			}

		\fi
		\foreach \i in {2,...,\c}{
			\draw[white] (0,\i*\ra) --++(0:\ra) --++(90:\ra) --++(180:\ra) --++(270:\ra);
		}
		
		\foreach \i in {2,...,\c}{
			\draw[white] (\i*\ra,#1*\ra-\ra) --++(0:\ra) --++(90:\ra) --++(180:\ra) --++(270:\ra);
		}
		
		\draw[white] (#1*\ra-2*\ra,#1*\ra-2*\ra) --++(0:\ra) --++(90:\ra) --++(180:\ra) --++(270:\ra);
		
		\foreach \i in {4,...,\c}{
			\draw[white] (#1*\ra-\ra,\i*\ra) --++(0:\ra) --++(90:\ra) --++(180:\ra) --++(270:\ra);
		}
		
		\draw[white] (#1*\ra-2*\ra,3*\ra) --++(0:\ra) --++(90:\ra) --++(180:\ra) --++(270:\ra);	
		
		\foreach \i in {4,...,\c}{
			\draw[white] (\i*\ra,0) --++(0:\ra) --++(90:\ra) --++(180:\ra) --++(270:\ra);
		}
		
		\draw[white] (\ra,\ra) --++(0:\ra) --++(90:\ra) --++(180:\ra) --++(270:\ra);
		\draw[white] (3*\ra,\ra) --++(0:\ra) --++(90:\ra) --++(180:\ra) --++(270:\ra);		
	
		\wtopcheckLines{#1}{#1}

}
\newcommand{\rearrangeTwo}[1]{
              
	\pgfmathsetmacro{\c}{#1-3}
	\pgfmathsetmacro{\e}{#1/6-17/6}
		
		\foreach \i in {2,...,\c}{
			\fill[black!80] (0,\i*\ra) --++(0:\ra) --++(90:\ra) --++(180:\ra) -- cycle;
		}
		
		\foreach \i in {2,...,\c}{
			\fill[black!80] (\i*\ra,#1*\ra-\ra) --++(0:\ra) --++(90:\ra) --++(180:\ra) -- cycle;
		}
		
		\fill[black!80] (#1*\ra-3*\ra,#1*\ra-3*\ra) --++(0:\ra) --++(90:\ra) --++(180:\ra) -- cycle;	
			
		\foreach \i in {4,...,\c}{
			\fill[black!80] (#1*\ra-\ra,\i*\ra) --++(0:\ra) --++(90:\ra) --++(180:\ra) -- cycle;
		}
		
		\fill[black!80] (#1*\ra-2*\ra,3*\ra) --++(0:\ra) --++(90:\ra) --++(180:\ra) -- cycle;
		
		\foreach \i in {4,...,\c}{
			\fill[black!80] (\i*\ra,0) --++(0:\ra) --++(90:\ra) --++(180:\ra) -- cycle;
		}
		\fill[black!80] (\ra,\ra) --++(0:\ra) --++(90:\ra) --++(180:\ra) -- cycle;
		\fill[black!80] (3*\ra,\ra) --++(0:\ra) --++(90:\ra) --++(180:\ra) -- cycle;		

		\fill[black!80] (#1/2*\ra-\ra/2,#1/2*\ra-5/2*\ra) --++(0:\ra) --++(90:\ra) --++(180:\ra) -- cycle;
		\fill[black!80] (#1/2*\ra+3*\ra/2,#1/2*\ra-5/2*\ra) --++(0:\ra) --++(90:\ra) --++(180:\ra) -- cycle;
		\fill[black!80] (#1/2*\ra-\ra/2,#1/2*\ra+3/2*\ra) --++(0:\ra) --++(90:\ra) --++(180:\ra) -- cycle;
		\fill[black!80] (#1/2*\ra+3*\ra/2,#1/2*\ra-\ra/2) --++(0:\ra) --++(90:\ra) --++(180:\ra) -- cycle;

		\wtopcheck{#1}{#1}
		
		\pgfmathsetmacro{\d}{#1/2-5/2}
					
			\foreach \i in {1,...,\d} {
				\fill[black!80] (3*\ra,2*\i*\ra+\ra) --++(0:\ra) --++(90:\ra) --++(180:\ra) -- cycle;
			}
			\foreach \i in {3,...,\d} {
				\fill[black!80] (2*\i*\ra+3*\ra-4*\ra,2*\d*\ra+\ra) --++(0:\ra) --++(90:\ra) --++(180:\ra) -- cycle;
			}
			\fill[black!80] (#1*\ra-6*\ra,2*\d*\ra-\ra) --++(0:\ra) --++(90:\ra) --++(180:\ra) -- cycle;
			\foreach \i in {2,...,\d} {
				\fill[black!80] (#1*\ra-4*\ra,2*\i*\ra-\ra) --++(0:\ra) --++(90:\ra) --++(180:\ra) -- cycle;
			}
			\foreach \i in {2,...,\d}{
				\fill[black!80] (6*\ra,2*\i*\ra-2*\ra) --++(0:\ra) --++(90:\ra) --++(180:\ra) -- cycle;
			}
			
		\ifnum #1 = 17
			\fill[black!80] (9*\ra,3*\ra) --++(0:\ra) --++(90:\ra) --++(180:\ra) -- cycle;
			\fill[black!80] (11*\ra,3*\ra) --++(0:\ra) --++(90:\ra) --++(180:\ra) -- cycle;
			
		\else
			\foreach \j in {1,...,\e}	{
				\pgfmathsetmacro{\d}{3*\e+4-3*\j}
				\foreach \i in {1,...,\d}{
					\fill[black!80] (3*\ra+3*\j*\ra+2*\i*\ra,#1*\ra-4*\ra-3*\j*\ra) --++(0:\ra) --++(90:\ra) --++(180:\ra) -- cycle;
				}
			}
			\foreach \j in {1,...,\e}	{
				\pgfmathsetmacro{\d}{3*\e+5-3*\j}
					\foreach \i in {1,...,\d}{
						\fill[black!80] (6*\ra+3*\j*\ra,3*\j*\ra+2*\i*\ra) --++(0:\ra) --++(90:\ra) --++(180:\ra) -- cycle;
					}
			}
			
			\foreach \j in {1,...,\e}	{
				\fill[black!80] (#1*\ra-6*\ra-3*\j*\ra,#1*\ra-4*\ra-3*\j*\ra-2*\ra) --++(0:\ra) --++(90:\ra) --++(180:\ra) -- cycle;
				\pgfmathsetmacro{\d}{3*\e+5-3*\j}
					\foreach \i in {1,...,\d}{
						\fill[black!80] (#1*\ra-4*\ra-3*\j*\ra,#1*\ra-4*\ra-3*\j*\ra-2*\i*\ra) --++(0:\ra) --++(90:\ra) --++(180:\ra) -- cycle;
					}
			}
			\foreach \j in {0,...,\e}	{
				\pgfmathsetmacro{\d}{3*\e+2-3*\j}
					\foreach \i in {1,...,\d}{
						\fill[black!80] (7*\ra+3*\j*\ra+2*\i*\ra,3*\j*\ra+3*\ra) --++(0:\ra) --++(90:\ra) --++(180:\ra) -- cycle;
					}
			}
			
		\fi
		\foreach \i in {2,...,\c}{
			\draw[white] (0,\i*\ra) --++(0:\ra) --++(90:\ra) --++(180:\ra) --++(270:\ra);
		}
		
		\foreach \i in {2,...,\c}{
			\draw[white] (\i*\ra,#1*\ra-\ra) --++(0:\ra) --++(90:\ra) --++(180:\ra) --++(270:\ra);
		}
		
		\draw[white] (#1*\ra-3*\ra,#1*\ra-3*\ra) --++(0:\ra) --++(90:\ra) --++(180:\ra) --++(270:\ra);
				
		\foreach \i in {4,...,\c}{
			\draw[white] (#1*\ra-\ra,\i*\ra) --++(0:\ra) --++(90:\ra) --++(180:\ra) --++(270:\ra);
		}
		
		\draw[white] (#1*\ra-2*\ra,3*\ra) --++(0:\ra) --++(90:\ra) --++(180:\ra) --++(270:\ra);	
		
		\foreach \i in {4,...,\c}{
			\draw[white] (\i*\ra,0) --++(0:\ra) --++(90:\ra) --++(180:\ra) --++(270:\ra);
		}
		
		\draw[white] (\ra,\ra) --++(0:\ra) --++(90:\ra) --++(180:\ra) --++(270:\ra);
		\draw[white] (3*\ra,\ra) --++(0:\ra) --++(90:\ra) --++(180:\ra) --++(270:\ra);		
	
		\wtopcheckLines{#1}{#1}

}
\begin{document}
\vspace{-1cm}
\tikzset
{
  x=.23in,
  y=.23in,
}

\title{Topology and Geometry of Crystallized Polyominoes}
\author{Greg Malen \footnote{ Mathematics Department, Duke University, gmalen@math.duke.edu}\quad and\quad\'Erika Rold\'an-Roa\footnote{Mathematics Department, The Ohio State University, roldanroa.1@osu.edu }}
\date{\small{October 2019}}
\maketitle
\makeatletter
\newcommand{\leqnos}{\tagsleft@true\let\veqno\@@leqno}
\newcommand{\reqnos}{\tagsleft@false\let\veqno\@@eqno}
\reqnos
\makeatother

\everymath{\displaystyle}

\vspace{-.5cm}
\begin{abstract}
 We give a complete solution to the extremal topological combinatorial problem of finding the minimum number of tiles needed to construct a polyomino with $h$ holes. We denote this number by $g(h)$ and say that a polyomino is crystallized if it has $h$ holes and $g(h)$ tiles. We analyze structural properties of crystallized polyominoes and characterize their efficiency by a topological isoperimetric inequality that relates minimum perimeter, the area of the holes, and the structure of the dual graph of a polyomino. We also develop a new dynamical method of creating sequences of polyominoes which is invariant with respect to crystallization and efficient structure. Using this technique, we prove that the sequence constructed in \cite{kahle2018polyominoes} are the unique free crystallized polyominoes with $h_l=(2^{2l}-1)/3$ holes. For $h\leq 8$ the values of $g(h)$ were originally computed by Tomas Olivera e Silva \cite{bworld}, and for the sequence $h_l=(2^{2l}-1)/3$  by Kahle and R\'oldan-Roa \cite{kahle2018polyominoes}, who also showed that asymptotically $g(h) \approx 2h$.

 \vspace{.3cm}
\noindent \textbf{Mathematics Subject Classifications:} 05A15-16, 05A20, 05B50, 05D99, and 57M20.

\end{abstract}
\section{Introduction}\label{S:intro}

A \textit{polyomino} is a planar shape formed by gluing together a finite number of congruent squares along their edges. If two squares of a polyomino intersect, then their intersection is an entire edge, and the gluing requirement implies that a polyomino must have a connected interior. We refer to the squares on a polyomino as either \textit{squares} or \textit{tiles}. 

In this paper, we are interested in the extremal topological problem of finding the minimum number of tiles required for a polyomino to have a specified number of holes. To be precise about the topology, we consider the tiles of a polyomino to be closed. Polyominoes are finite unions of these closed tiles, so they are compact. The \textit{holes} of a polyomino are the bounded, connected components of the polyomino's complement in the plane. For a polyomino $A$ we denote its number of holes and tiles by $h(A)$ and $|A|$, respectively.

\begin{definition}
For $h\ge 1$, we define the sequence of the minimum number of tiles needed for constructing a polyomino with $h$ holes as
\begin{equation}
g(h):= \min_{h(A)=h}|A|.
\end{equation} 
\end{definition}
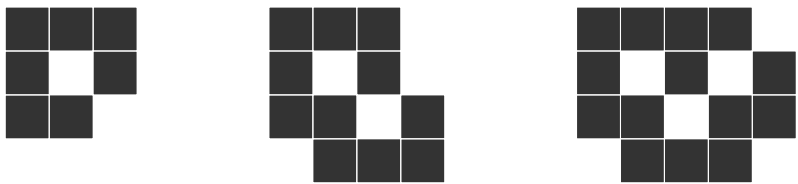
\begin{figure}[H] 
\begin{center} 
\begin{tikzpicture} 
\foreach \i/\j in {-1/-1, 0/-1, 1/1,  -1 / 0, -1/1, 0/1, 1/0 } { \fill[black!80] (\i,\j+1) --++(0:1) --++(90:1) --++(180:1) -- cycle;}

\foreach \i/\j in {-1/-1, 0/-1,1/1,  -1 / 0, -1/1, 0/1, 1/0 } { \draw[white] (\i,\j+1) --++(0:1) --++(90:1) --++(180:1) --++(270:1);}


\foreach \i/\j in {-1/-1, 0/-1, 1/1,  -1 / 0, -1/1, 0/1, 1/0, 0/-2, 1/-2, 2/-2, 2/-1 } { \fill[black!80] (\i+6,\j+1) --++(0:1) --++(90:1) --++(180:1) -- cycle;}


\foreach \i/\j in {-1/-1, 0/-1,1/1,  -1 / 0, -1/1, 0/1, 1/0, 0/-2, 1/-2, 2/-2 , 2/-1 } { \draw[white] (\i+6,\j+1) --++(0:1) --++(90:1) --++(180:1) --++(270:1);}

\foreach \i/\j in {-1/0, -2/0,-2/1, -2/2, 0 / 1, 1 / 0, 2 / 0, 2 / 1, 0 / 2, 1  / 2, -1/2, -1/-1, 0/-1, 1/-1 } { \fill[black!80] (\i+14,\j) --++(0:1) --++(90:1) --++(180:1) -- cycle;}

\foreach \i/\j in {-1/0, -2/0,-2/1, -2/2, 0 / 1, 1 / 0, 2 / 0, 2 / 1, 0 / 2, 1  / 2, -1/2, -1/-1, 0/-1, 1/-1 } { \draw[white] (\i+14,\j) --++(0:1) --++(90:1) --++(180:1) --++(270:1);}

\end{tikzpicture}

\end{center}

\caption{ Crystallized polyominoes for $1 \leq h\leq 3$. } 
\label{three_holes} 
\end{figure}


\begin{definition}
A polyomino with $h$ holes is \textit{crystallized} if it has $g(h)$ tiles.
\end{definition}

In \cite{bworld}, all polyominoes with $n\leq 28$ tiles were enumerated by their number of holes. These computations give the values of $g(h)$ for $1\leq h\leq 8$ and also count the number of crystallized polyominoes for $1\leq h\leq 8$. To be precise, we enumerate here the class of \textit{free polyominoes}, in which two polyominoes are the same if they are equivalent up to rotations, reflections, and translations (see Table \ref{tab:my_label}).

In \cite{kahle2018polyominoes}, the related function $f(n)$, defined as the maximum number of holes that a polyomino with $n$ tiles can have, was introduced and its asymptotic behavior was studied. It was proved that $f(n) = (1/2) n + O(\sqrt{n}) + o(1)$. In the same paper, a sequence of crystallized polyominoes was constructed with $g(h)+h=(2^l + 1)^2-1$ for all $l\geq 1$ (see Table \ref{tab:my_label}). We will refer to this sequence of crystallized polyominoes as the K--R sequence, and we denote its elements by $S_l$ for $l\geq 1$.

\begin{table}[H]
    \centering
\begin{tabular} {  c | c|c|c|c|c|c|c | c| c  }

 $h$ & 1 & 2 & 3 & 4 & 5 & 6 & 7 & 8 & $(2^{2l}-1)/3$ \\
 \hline
 $g(h)$ & 7  & 11 & 14 & 17 & 19 & 23 & 25 & 28 & $[(2^{2l+1}+ 3 \cdot 2 ^ {l+1} + 4)/3] -1$  \\
\hline
$\mid \mathcal{G}_{h} \mid$ & 1 & 4 & 3 & 8 & 1 & 64 & 4 & 37 &  $1^*$\\
\end{tabular}
    \caption{We denote by $\mid \mathcal{G}_{h} \mid$ the number of free polyominoes that have $h$ holes and $g(h)$ tiles. The table shows all previously known values for $g(h)$ and $\mid \mathcal{G}_{h} \mid$.  The values for $1\leq h\leq8$ are from \cite{bworld}, and the first two rows of the last column are from \cite{kahle2018polyominoes}. *We prove in Theorem \ref{T:uniqueness} that $\mid \mathcal{G}_{(2^{2l}-1)/3} \mid=1$ for all $l\geq 1$.}
    \label{tab:my_label}
\end{table}

In \cite{malen2019polyiamonds}, the analogous problem for polyiamonds, polyforms constructed with equilateral triangles, was completely solved. In that setting, all crystallized polyiamonds were also found to satisfy a set of optimal geometric and topological structural conditions. 


Here, we completely answer these questions for polyominoes. We find all values of $g(h)$, and we characterize the values of $h$ for which optimal structural conditions, which we define precisely in Section \ref{Ss:mainthms}, are satisfied. In the process of doing so, we are also able to solve the enumerative combinatorial question of how many crystallized polyominoes exist with $h_l=(2^{2l}-1)/3$ holes, proving that the sequence of crystallized polyominoes constructed in \cite{kahle2018polyominoes} are unique.

\section{Statement of Main Results}\label{Ss:mainthms}

We use the sequences of perfect squares and pronic numbers, $N^2$ and $N(N+1)$, to produce benchmark values for $g(h)$, tracking the maximum number of holes that can fit in a certain area. For $\alpha$ a square or pronic number, let $h_{\alpha}$ denote the maximum number of holes which can exist in a polyomino $A$ which is contained in the square or pronic rectangle of area $\alpha$.




\begin{theorem}\label{T:holesminper}
For any positive integer $N\ge 3$,
\begin{equation}\label{hsq}
    h_{N^2}= 
    \begin{cases}
        \frac{(N-1)^2}{3}-1 & \text{ if $N\equiv 1$ mod 3}\\[8pt]
        \frac{N(N-2)}{3} & \text{ if $N=2^l+1$ for $l \geq 1$}\\[8pt]
        \frac{N(N-2)}{3}-1 & \text{else},
    \end{cases}
\end{equation}
and
\begin{equation}\label{hpr}
    h_{N(N+1)}= 
    \begin{cases}
        \frac{N(N-1)}{3}-1 & \text{ if $N\equiv 0$ or 1 mod 3}\\[8pt]
        \frac{(N+1)(N-2)}{3}-1 & \text{ if $N\equiv 2$ mod 3}.
    \end{cases}
\end{equation}
\end{theorem}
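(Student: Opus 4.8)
The plan is to prove each value by matching an upper bound (no polyomino confined to the box can have more holes) with an explicit construction attaining it. Throughout I would record a polyomino $A$ by its set of tile-cells inside the relevant $N\times N$ square or $N\times(N+1)$ pronic box, and work with the exact Euler relation for the closed planar set $A$. The first step is a combinatorial identity: writing $n=|A|$, letting $V_i$ be the number of grid vertices surrounded by four tiles, $P$ the total perimeter (outer boundary together with all hole boundaries), and $\pi$ the number of \emph{pinch vertices} (grid points at which two diagonally opposite cells are tiles and the other two are not), a direct face–edge–vertex count gives
\[
h(A)=n+1-V_i-\tfrac{P}{2}+\pi .
\]
The pinch term is essential: it is exactly the defect between the number of boundary vertices and boundary edges, and it is what allows optimal configurations to pack holes at density $1/3$ rather than $1/4$.

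For the upper bound, I would observe that filling in the holes of $A$ produces a hole-free region of area at least $n+h$ contained in the box, so $n\le N^2-h$ (the same with $N(N+1)$ in the pronic case). Substituting into the identity yields $2h\le N^2+1+\bigl(\pi-V_i-\tfrac{P}{2}\bigr)$, so the whole problem reduces to establishing the sharp inequality $\pi-V_i-\tfrac{P}{2}\le -h+c_N$ for the correct boundary constant $c_N$, which then gives $3h\le N^2+1+c_N$. I would prove this bound by a local discharging analysis: each single-cell hole has four corners, a pinch at a corner must be shared with a diagonally adjacent hole, and confinement to the box forces a frame of boundary edges that cannot be recovered by any pinch. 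Tracking how the period-$3$ diagonal pattern can meet the four sides of the box is precisely what produces the dependence on $N\bmod 3$ (through the divisibility of $N(N-2)$, respectively $N(N-1)$, by $3$) and the uniform $-1$ correction; the case $N=2^l+1$ is exactly the one where the recursive pattern meets the frame with no wasted boundary hole, accounting for the extra $+1$.

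For the matching constructions I would start from the diagonal hole lattice $\{(x,y):x+y\equiv 0 \pmod 3\}$, which realizes $V_i=0$ together with maximal pinch density, and seal it inside the box with a tile frame. A generic box loses one hole to the boundary, and I would recover as many as the bound permits by explicit \emph{rearrangements} of the tiles in the corners and along one side (the configurations encoded by the figure macros), verifying the hole count cell by cell in each residue class of $N$. For $N=2^l+1$ I would instead exhibit the element $S_l$ of the K--R sequence, for which $g(h_l)+h_l=(2^l+1)^2-1=N^2-1$ forces $h_{N^2}=N(N-2)/3$ directly. The pronic statement runs in complete parallel: the same identity applies verbatim, and the three-way split comes from selecting whichever of $N$, $N-1$, $N+1$ is divisible by $3$.

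The main obstacle is the boundary bookkeeping in the upper bound. The bulk density $1/3$ is forced easily by the identity, but proving that the additive constant is exactly $-1$ in the generic cases (and exactly $0$ in the recursive case) requires showing that \emph{no} reshuffling of tiles near the frame can save an additional hole, i.e. that $\pi-V_i-\tfrac{P}{2}\le -h+c_N$ is tight with the stated $c_N$. Producing constructions that meet this bound for every residue class simultaneously — rather than only asymptotically, as in the previously known estimate $g(h)\approx 2h$ — is where essentially all of the casework resides.
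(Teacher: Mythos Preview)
Your Euler-type identity and discharging outline give a plausible route to the \emph{bulk} inequality $3h\lesssim \alpha$, but the proposal has a genuine gap at exactly the place where the theorem is delicate: distinguishing $N=2^l+1$ from all other odd $N$ with $N\not\equiv 1\pmod 3$. For such $N$ the claimed upper bounds differ by exactly one hole ($h_{N^2}=N(N-2)/3$ versus $N(N-2)/3-1$), yet $N=9$ and $N=15$, say, have the same parity, the same residue mod $3$, and locally identical boundary behaviour; the only arithmetic feature separating them is the $2$-adic valuation of $N-1$. A local discharging rule sees only bounded neighbourhoods of vertices, so it cannot by itself detect this. Your sentence ``the case $N=2^l+1$ is exactly the one where the recursive pattern meets the frame with no wasted boundary hole'' acknowledges that something recursive is needed, but no such mechanism is actually present in the argument you sketch; as written, your bound $\pi-V_i-P/2\le -h+c_N$ would have to produce two different constants $c_N$ for $N=9$ and $N=15$, and nothing in the proposal explains how.

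The paper closes this gap by an entirely different device. Its upper bound comes from the edge identity $4n=2b(A)+p(A)$ together with $b(A)\ge n-1$, $p_o(A)\ge p_{\min}(n+h)$ and $h\le p_h(A)/4$, packaged as $h\le M(n,h)$; this yields a theoretical maximum $t_\alpha$ (your ``bulk'' bound). To show $h_\alpha\le t_\alpha-1$ in the generic cases, the paper first proves a Checkerboard Lemma forcing all holes of an efficiently structured polyomino with rectangular interior into one checkerboard class, then for odd $N$ shows this forces a rigid sub-pattern $P_N$ that makes the polyomino \emph{compressible} to an $\tfrac{N+1}{2}\times\tfrac{N+1}{2}$ box. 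Iterating compression sends $N=2^lp+1$ (with $p>1$ odd) down to the even square of side $p+1$, where a simple parity obstruction at the corners rules out $t_\alpha$ holes; only $N=2^l+1$ survives all the way to the $3\times 3$ base case. This expansion/compression machinery is the key lemma your proposal is missing, and it also supplies the odd-square \emph{constructions} (by expanding the even-square ones), whereas your ``diagonal lattice $x+y\equiv 0\pmod 3$ plus corner rearrangements'' does not match the structure the Checkerboard Lemma forces and is not shown to hit the exact constants.
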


For all square and pronic numbers up to $\alpha=36$, the values of $h_{\alpha}$ and $g(h_{\alpha})$ can be determined from the values computed in \cite{bworld}, listed in Table \ref{tab:my_label}. For those first few values it can happen that $h_{N^2}=h_{N(N+1)}$. To avoid this complication, and because those values are already known, the next theorem gives the values of $g(h_\alpha)$ for $\alpha\geq 36$.

\begin{theorem}\label{T:gpronicperfectsquare}
For $\alpha\in\{N^2, N(N+1) : N \ge 6\}$, $g(h_\alpha)=\alpha-h_\alpha-C$ for
\begin{equation}\label{constants}
    C= 
    \begin{cases}
        1 & \text{ if } \alpha=N^2 \text{ and } N=2^l+1\\[6pt]
        3 & \text{ if } \alpha=N^2 \text{ and } N\equiv 1 \text{ mod } 3\\
          & \text{ or } \alpha=N(N+1) \text{ and } N\not\equiv 2 \text{ mod } 3\\[6pt]
        4 & \text{ if } \alpha=N^2 \text{ and } N\not\equiv 1 \text{ mod } 3, N\neq2^l+1\\[6pt]
        5 & \text{ if } \alpha=N(N+1) \text{ and } N\equiv 2 \text{ mod } 3.\\    
        \end{cases}
\end{equation}
\end{theorem}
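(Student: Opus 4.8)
The plan is to prove the two inequalities $g(h_\alpha)\le \alpha-h_\alpha-C$ and $g(h_\alpha)\ge \alpha-h_\alpha-C$ separately, reading the constant $C$ of \eqref{constants} as the number of \emph{defect cells}. Inside the $\alpha$-region (the $N\times N$ square or the $N\times(N+1)$ rectangle), each of the $\alpha$ unit cells is either a tile of $A$, a cell of a hole, or an exterior cell lying in the region but outside $A$. When every hole is a single cell the first count is $|A|$ and the second is exactly $h_\alpha$, so writing $C$ for the third yields the accounting identity $\alpha=|A|+h_\alpha+C$. Thus the theorem is equivalent to the assertion that an extremal polyomino can be taken to have unit holes together with exactly $C$ exterior cells, and that no polyomino with $h_\alpha$ holes does better.

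For the upper bound I would reuse the extremal configurations already produced in the proof of Theorem~\ref{T:holesminper}. These fill the $\alpha$-region with the densest diagonal hole pattern (each hole a single cell, no two holes adjacent, the complementary tiles forming a connected region) and realize $h_\alpha$ holes exactly. It then remains only to count, for each residue class, how many cells of the hole-colour lie on the region's boundary and hence cannot be promoted to holes without leaking to the exterior. A direct inspection of the five cases gives this count as $1$ when $N=2^l+1$, $3$ when $N\equiv 1\bmod 3$ (square) or $N\not\equiv 2\bmod 3$ (pronic), $4$ for the remaining squares, and $5$ for pronics with $N\equiv 2\bmod 3$, matching \eqref{constants} and giving $g(h_\alpha)\le\alpha-h_\alpha-C$.

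The lower bound is the substantive direction. Given any polyomino $A$ with $h(A)=h_\alpha$, let $R$ be its bounding box, of dimensions $p\times q$ and area $\rho=pq$. First I would establish, via a rectangular refinement of the hole-packing bound underlying Theorem~\ref{T:holesminper}, that housing $h_\alpha$ holes forces $\rho\ge\alpha$, with the square and pronic shapes being the most efficient aspect ratios; this is precisely why $N^2$ and $N(N+1)$ are the benchmark areas. Writing $a\ge h_\alpha$ for the total hole area and $c$ for the number of exterior cells of $A$ inside $R$, the identity becomes $|A|=\rho-a-c$, so the goal reduces to $a+c\le\rho-\alpha+h_\alpha+C$: any slack in $\rho-\alpha$ from enlarging the box, and any excess from merging holes or leaving extra exterior cells, must be paid for by an equal or greater deficit in the number of holes that can be separated. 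Combined with $a\ge h_\alpha$ this yields $|A|\ge\alpha-h_\alpha-C$.

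The main obstacle will be the lower bound's simultaneous optimization over all bounding boxes and all hole-size distributions: one must rule out that a slightly non-square box, or a few holes of area exceeding one, could accommodate $h_\alpha$ holes using fewer separating tiles than the benchmark diagonal pattern. I expect to control this with the topological isoperimetric inequality advertised in the abstract — relating minimum perimeter, total hole area, and the cycle structure of the dual graph — applied to the filled polyomino $\bar A$, whose Euler characteristic $1-h_\alpha$ pins down the trade-off between boundary length and hole count. The five residue cases of \eqref{constants} then correspond to the distinct boundary-defect patterns forced at the optimum, each verified against the matching explicit family (\texttt{EvenSqZero}, \texttt{EvenSqTwo}, and the rearrangement and spiral constructions) used for the upper bound.
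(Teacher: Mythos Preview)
Your accounting identity $\alpha=|A|+h_\alpha+C$ is the right frame, but the proposal misidentifies which direction carries the work and hand-waves the part that actually requires it.

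The lower bound is in fact the easy direction. Once Theorem~\ref{T:holesminper} supplies the value of $h_\alpha$, the bound $g(h_\alpha)\ge \alpha-h_\alpha-C$ is immediate from the universal inequality $g(h)\ge m(h)$ coming from $h\le M(n,h)$ (Lemma~\ref{L:M(n,h)}): one simply checks via equations~\eqref{tsq}, \eqref{tpr} and Lemma~\ref{L:propertiesm} that $m(h_\alpha)=\alpha-h_\alpha-C$. Your bounding-box argument is both unnecessary and, as stated, not correct: the claim ``housing $h_\alpha$ holes forces $\rho\ge\alpha$'' does not follow from anything you have, since $h_\alpha$ is \emph{defined} as the maximum number of holes fitting in the $\alpha$-region, and there is no a~priori reason a polyomino with that many holes could not sit in a rectangle of a different aspect ratio and smaller area. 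The paper never argues via bounding boxes here; the isoperimetric inequality applied directly to $n$ and $h$ does all the work.

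The upper bound is where the substance lies, and your sketch does not supply it. There is no single ``densest diagonal hole pattern'' that one inspects in five cases; the paper builds six explicit infinite families (Sections~\ref{evsq1}--\ref{pr2}) out of carefully rooted plus-trees and spirals, each verified to be acyclic, to have all holes of area one, and to sit in the right boundary shape $D_1$ or $D_2$ so that the total area is exactly $\alpha-C$. Moreover, for odd $N$ with $N\not\equiv1\bmod3$ and $N\ne2^l+1$ no direct construction is given at all: those cases are obtained by repeatedly applying the expansion operator $E$ to even-square constructions (Corollary~\ref{comp:eff}, proof at the end of Section~\ref{S:Expansion}). Your description of the constant $C$ as the count of ``cells of the hole-colour on the region's boundary'' does not match what actually happens---$C$ is the number of empty boundary cells in the specific construction, determined by which corners must be removed to avoid dual cycles while keeping the boundary components connected, and this is established case by case rather than read off a colouring. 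Finally, note that Theorems~\ref{T:holesminper} and~\ref{T:gpronicperfectsquare} are proven \emph{together} in the paper via these constructions and the obstruction Lemmas~\ref{nosqs}, \ref{nodiv2}, \ref{L:oddintobst}; you cannot ``reuse the extremal configurations from the proof of Theorem~\ref{T:holesminper}'' as an input, since they are the same configurations.
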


In the above, $C$ is the number of tiles missing from the boundary in a crystallized polyomino which has the maximum number of holes in a square or pronic rectangle. For instance, the $\alpha=6^2$ case gives $h_{\alpha}=7$ and $C=4$, and confirms that $g(7)=36-7-4=25$ from Table \ref{tab:my_label}. We are now able to give the values for $g(h)$ for all $h \geq  1$, in terms of the values of $g(h_{\alpha})$.

\begin{theorem}\label{ghsolved}
 For $h\geq 1$, let $\alpha =\min\{N^2, N(N+1) : h \le h_{\alpha}\}$. Then 
 \begin{equation}
     g(h)=g(h_{\alpha})-2(h_{\alpha}-h). 
 \end{equation}
\end{theorem}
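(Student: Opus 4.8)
The plan is to fix $h\ge 1$, let $\alpha$ be the least square or pronic number with $h\le h_\alpha$, and let $\alpha'$ be the preceding such number, so that $h_{\alpha'}<h\le h_\alpha$. Theorem~\ref{T:gpronicperfectsquare} (together with Table~\ref{tab:my_label} for the finitely many small cases) supplies the anchor value $g(h_\alpha)$ exactly, so it suffices to prove the two inequalities $g(h)\le g(h_\alpha)-2(h_\alpha-h)$ and $g(h)\ge g(h_\alpha)-2(h_\alpha-h)$. Both compare $h$ only with the \emph{ceiling} $h_\alpha$, never with the floor $h_{\alpha'}$; this is exactly what makes the formula insensitive to the larger increment in $g$ that occurs when one crosses a benchmark (e.g.\ $g(6)-g(5)=4$). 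I would obtain each inequality from a surgery that changes the number of holes by one while changing the number of tiles by exactly two, iterated $h_\alpha-h$ times.

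For the upper bound I would start from the explicit crystallized polyomino with $h_\alpha$ holes and $g(h_\alpha)$ tiles produced for Theorem~\ref{T:gpronicperfectsquare} (the families drawn via \texttt{EvenSqZero}, \texttt{twospirals}, \texttt{coiledsnake}, \texttt{rearrangeZero}, etc.), which realizes the maximal hole packing inside the square or pronic rectangle of area $\alpha$. I would release holes one at a time, always choosing a hole whose shortest channel to the already-opened exterior has length two, and deleting those two tiles; this merges the chosen hole into the unbounded complementary region, lowering the hole count by exactly one and the tile count by exactly two, while preserving connectivity and creating no new hole. Peeling inward in the order dictated by the explicit pattern, I would perform $h_\alpha-h$ such moves to reach a polyomino with $h$ holes and $g(h_\alpha)-2(h_\alpha-h)$ tiles. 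The routine part here is only the bookkeeping that length-two releases remain available all the way down to any $h\ge 1$, uniformly across the residue classes of $N$ modulo $3$ and the power-of-two exceptions.

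For the lower bound I would argue contrapositively: if some polyomino $A$ had $h$ holes and strictly fewer than $g(h_\alpha)-2(h_\alpha-h)$ tiles, I would manufacture from it a polyomino with $h_\alpha$ holes and strictly fewer than $g(h_\alpha)$ tiles, contradicting Theorem~\ref{T:gpronicperfectsquare}. The mechanism is the reverse surgery: because $h<h_\alpha$ leaves spare capacity inside the benchmark area, one should be able to trap an empty site into a new hole by adding at most two tiles, and to repeat this until the hole count reaches $h_\alpha$, at a cumulative cost of at most $2(h_\alpha-h)$ tiles. Equivalently, I would show that the minimal tile count rises by at most two per additional hole as long as one stays below the capacity $h_\alpha$, which pins $g(h)$ from below once the ceiling $g(h_\alpha)$ is fixed.

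The main obstacle is precisely this lower bound, and specifically the claim that an extra hole can always be installed for only two tiles while $h<h_\alpha$. Trapping a fresh cell naively costs three tiles along a flat or convex stretch of boundary, so one cannot add holes to an arbitrary efficient configuration blindly; the argument must invoke the paper's structural/isoperimetric characterization of efficient polyominoes to locate a concave site, or instead compare $A$ directly against the regular benchmark packing of Theorem~\ref{T:holesminper}, in which every hole-slot carries marginal cost exactly two. Making this comparison rigorous—so that the two-tiles-per-hole rate is shown to be unbeatable strictly below the benchmark while the peeling above shows it is achievable—is the crux; once it is in place the two inequalities coincide and $g(h)=g(h_\alpha)-2(h_\alpha-h)$ follows. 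The residual tasks, namely checking that the anchor values from Theorems~\ref{T:holesminper} and~\ref{T:gpronicperfectsquare} are mutually consistent across all residue and power-of-two cases and that the interval endpoints behave correctly, I would relegate to finite routine computation.
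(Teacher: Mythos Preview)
Your upper bound is essentially the paper's: start from the explicit crystallized polyomino with $h_\alpha$ holes from Section~\ref{constructions} and peel off one hole and two tiles at a time (the paper's ``dismantling'' process in Section~\ref{S:destroy}). That part is fine, modulo the case-by-case verification you correctly flag as routine.

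The lower bound, however, has a genuine gap. Your reverse surgery requires that from an \emph{arbitrary} polyomino $A$ with $h<h_\alpha$ holes one can always create a new hole at a cost of only two tiles, and you already note that on a flat or convex stretch this costs three. You propose to fix this by invoking structural properties of $A$ to locate a concave site, but no such property is available: $A$ is a hypothetical counterexample about which you know nothing except its tile and hole counts, and there is no reason it should have any concave feature to exploit. Indeed the paper explicitly shows (Figure~\ref{fig:plus3}) that three tiles, not two, is the general cost of adding a hole. So the contrapositive you sketch does not close.

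The paper sidesteps this entirely by never attempting a hole-adding surgery. Instead it uses the topological isoperimetric inequality (\ref{E:ubh(A)2}) to define the \emph{a priori} lower bound $m(h)=\min\{n:M(n,h)\ge h\}$, so that $g(h)\ge m(h)$ holds automatically for every $h$. Lemma~\ref{L:propertiesm} then shows that $m(h)$ increases by exactly $2$ per hole except when $m(h)+h$ crosses a square or pronic threshold, which gives $g(h_\alpha)-2(h_\alpha-h)=m(h_\alpha)-2(h_\alpha-h)=m(h)$ for all $h$ in the interval $(h_{\alpha'},h_\alpha]$. The lower bound thus falls out of arithmetic on $m$, with no need to modify any polyomino. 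This is the key idea you are missing: the matching lower bound comes from the isoperimetric function $M(n,h)$ and the jump structure of $m(h)$, not from a constructive argument on a putative counterexample.
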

Values of $g(h)$ are given in Table \ref{Table:113} for $9\leq h\leq 113$, with values of $h_{\alpha}$ in bold. Then in Theorems \ref{T:efficient} and \ref{T:efficientall} we give structural characterizations of crystallization for all $h\ge1$.

 

The \textit{dual graph} of a polyomino is the graph whose vertices are indexed by the tiles of the polyomino, with edges between two vertices if and only if the respective tiles in the polyomino share an edge. We say that a polyomino is \textit{acyclic} if its dual graph is a tree, and refer to cycles in the dual graph as \emph{dual cycles}.

The \emph{area of a hole} is defined to be the number of tiles needed to fill it.

A polyomino with $h$ holes and $n$ tiles has \textit{minimal outer perimeter} if it has an outer perimeter equal to $2 \left\lceil2\sqrt{n+h}\right\rceil$.

\begin{definition}\label{D:efficientlystructured}
A polyomino is \textit{efficiently structured} if it is acyclic, each hole has an area of one, and it has minimal outer perimeter.
\end{definition}

\begin{theorem}\label{T:efficient}
 Efficiently structured polyominoes are crystallized. 
\end{theorem}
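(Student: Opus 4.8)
The plan is to prove that any efficiently structured polyomino $A$ with $h$ holes is crystallized, i.e. that $|A|=g(h)$. Since $g(h)$ is by definition the \emph{minimum} number of tiles among all polyominoes with $h$ holes, it suffices to show two things: first, that an efficiently structured polyomino with $h$ holes has a specific number of tiles determined by $h$ (call it $n(h)$), and second, that no polyomino with $h$ holes can have fewer than $n(h)$ tiles. I would begin by establishing the exact tile count of an efficiently structured polyomino directly from the three defining conditions. Because each hole has area one, each hole is a single missing unit square completely surrounded by tiles. Because the dual graph is a tree, there are no dual cycles other than those forced around the unit holes, which lets me use an Euler-characteristic / edge-counting argument on the dual graph: a tree on $n$ vertices has $n-1$ edges, and each area-one hole corresponds to a $4$-cycle of tiles (the four tiles bordering the hole), so the holes contribute extra adjacencies in a controlled way. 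Translating "acyclic" plus "$h$ unit holes" into a precise relation among $n$, $h$, and the number of dual edges is the first key step.

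Next I would bring in the minimal outer perimeter condition, namely that the outer perimeter equals $2\lceil 2\sqrt{n+h}\rceil$. The quantity $n+h$ is the total area enclosed by the outer boundary (tiles plus the unit holes), so $\lceil 2\sqrt{n+h}\rceil$ is exactly the minimal semiperimeter of a polyomino-with-holes-filled-in of that area — this is the classical discrete isoperimetric inequality for polyominoes (the bound achieved by near-square spiral/rectangular shapes). Combining the tree condition, the unit-hole condition, and this perimeter condition should pin down $n$ as an explicit function of $h$. Concretely, I would use a counting identity relating perimeter, area, and the number of interior edges: for a polyomino, (outer perimeter) $+ 2\cdot(\text{internal adjacencies}) = 4n$, with appropriate bookkeeping for the edges bordering holes. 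Feeding in that the dual graph is a tree (so internal adjacencies $= n-1$, minus corrections for the cycles forced by holes) and that the perimeter is minimal yields the value $n(h)$, which I would then verify matches the $g(h)$ formula coming from Theorems~\ref{T:gpronicperfectsquare} and \ref{ghsolved}.

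For the lower bound — that no polyomino with $h$ holes uses fewer than $n(h)$ tiles — I would invoke the benchmark machinery already established. Theorem~\ref{ghsolved} expresses $g(h)$ in terms of $g(h_\alpha)$ by linear interpolation $g(h)=g(h_\alpha)-2(h_\alpha-h)$, and Theorems~\ref{T:holesminper} and \ref{T:gpronicperfectsquare} give the exact optimal packing of holes into square and pronic frames. The strategy is to show that the explicit tile count $n(h)$ forced by efficient structure coincides with the value $g(h)$ predicted by these theorems; since those theorems already certify $g(h)$ as the true minimum, matching the counts finishes the proof. In effect, efficient structure is a sufficient set of local/global conditions that saturates every inequality used to derive the minimum, so no tiles are wasted.

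The main obstacle I anticipate is the careful perimeter-area-adjacency accounting in the presence of holes, and in particular reconciling the isoperimetric semiperimeter $\lceil 2\sqrt{n+h}\rceil$ (which governs the shape with holes filled in) against the acyclicity of the dual graph (which constrains the tiles alone). The subtlety is that area-one holes create $4$-cycles among \emph{tiles} even though the problem insists the dual graph is a tree; I must be precise about whether the dual graph is taken on the filled region or the actual polyomino, and handle the interaction so that the hole count $h$, the tree condition, and the minimal perimeter jointly determine $n$ without overcounting. Getting the parity and ceiling behavior of $\lceil 2\sqrt{n+h}\rceil$ to align exactly with the case distinctions ($N\equiv 1 \bmod 3$, $N=2^l+1$, etc.) in Theorems~\ref{T:holesminper} and \ref{T:gpronicperfectsquare} will require matching the arithmetic in those formulas to the arithmetic that falls out of the efficient-structure count, and this case-by-case verification is where the real work lies.
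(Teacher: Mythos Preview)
Your proposal has a genuine circularity problem. You plan to verify that the tile count $n(h)$ forced by efficient structure matches the value of $g(h)$ given by Theorems~\ref{T:holesminper}, \ref{T:gpronicperfectsquare}, and \ref{ghsolved}. But in the paper's logical order those theorems are proved \emph{after} Theorem~\ref{T:efficient}, and their proofs depend on it: the constructions in Section~\ref{constructions} are certified as crystallized precisely \emph{because} they are efficiently structured and Theorem~\ref{T:efficient} applies. So you cannot invoke those formulas here.

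There is also a technical misconception. You worry that ``area-one holes create $4$-cycles among tiles even though the problem insists the dual graph is a tree.'' They do not: the four tiles bordering a unit hole are the north, south, east, and west neighbours of the hole, and no two of these share an edge with each other, so no cycle is created in the dual graph. The accounting you want is simply $4n = 2b(A) + p_o(A) + p_h(A)$; plugging in $b(A)=n-1$, $p_h(A)=4h$, and $p_o(A)=p_{\min}(n+h)$ yields exactly $h = M(n,h)$, which is Lemma~\ref{Meff}. No correction terms are needed.

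The paper's actual argument is short and self-contained: efficient structure is equivalent to $h=M(n,h)$; since $M(\cdot,h)$ is nondecreasing with steps of $0$ or $1/2$, either $M(n-1,h)<h$ (so no polyomino with $n-1$ tiles and $h$ holes exists), or $M(n-1,h)=h$, which forces $n-1+h$ to be square or pronic. In the latter case a hypothetical polyomino $B$ with $n-1$ tiles and $h$ holes would itself be efficiently structured, but having total area exactly a square or pronic number with minimal outer perimeter forces the entire boundary layer to be filled, creating a dual cycle --- contradicting efficient structure. No case analysis on $N\bmod 3$ or on the explicit $g(h)$ formulas is required.
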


However, the converse of Theorem \ref{T:efficient} does not hold. We prove in Theorem \ref{T:efficientall} that there is an exceptional set of crystallized polyominoes which are acyclic with all holes having of area of one, but which fail to attain minimal outer perimeter. As a corollary, this implies that all crystallized polyominoes are in fact acyclic with all holes having an area of one.

For the cases from Theorem \ref{T:gpronicperfectsquare} in which $C=4$ or $C=5$, define
\begin{align*}
S &= \{N^2 \mid N \equiv 0 \text{ or } 2 \text{ mod } 3 \text{ and } N \neq 2^l + 1 \text{ for any } l \geq 1\},\\
R &= \{N(N+1) \mid N \equiv 2 \text{ mod } 3 \}.
\end{align*}

\begin{theorem}\label{T:efficientall}
 For $\alpha \in S \cup R$, a crystallized polyomino with $h_{\alpha}+1$ holes is acyclic and each of its holes have an area of one, but it does not attain minimal outer perimeter. For all other $h$, a crystallized polyomino is efficiently structured.
\end{theorem}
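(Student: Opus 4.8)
The plan is to reduce the whole structural statement to a single numerical comparison between the exact values $g(h)$, supplied by Theorems \ref{T:holesminper}--\ref{ghsolved}, and the value forced by the topological isoperimetric inequality when all three efficiency conditions hold at once. First I would record the master identity. For a polyomino $A$ with $n=|A|$ tiles and $h$ holes of areas $a_1,\dots,a_h$, whose dual graph has $e$ edges and cyclomatic number $c=e-n+1$, edge counting gives total perimeter $P=4n-2e=2n+2-2c$. Splitting $P$ into the outer perimeter $P_o$ and the hole perimeters, and applying the polyomino isoperimetric inequality both to the simply connected region enclosed by the outer boundary (of area $n+\sum_i a_i$) and to each hole, yields
\begin{equation*}
n \;\ge\; \Bigl\lceil 2\sqrt{\,n+\textstyle\sum_i a_i}\,\Bigr\rceil+\sum_i\lceil 2\sqrt{a_i}\rceil+c-1 .
\end{equation*}
Minimizing the right-hand side over all hole-area distributions and over $c\ge0$ forces $c=0$ and every $a_i=1$, giving the bound $n\ge G(h)$, where $G(h)$ is the least integer with $G(h)\ge\lceil2\sqrt{G(h)+h}\rceil+2h-1$; equality $n=G(h)$ holds precisely when $A$ is acyclic, all holes have area one, and $P_o=2\lceil2\sqrt{n+h}\rceil$, i.e. when $A$ is efficiently structured. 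This is the quantitative form of Theorem \ref{T:efficient}: $g(h)\ge G(h)$ always, with $g(h)=G(h)$ iff an efficiently structured polyomino with $h$ holes exists. Crucially, the same inequality shows that if $A$ has a dual cycle ($c\ge1$) or a hole of area $\ge2$, then $n\ge G(h)+1$.

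Next I would run the numerical comparison. Using the closed forms for $h_\alpha$ and $g(h_\alpha)$ in Theorems \ref{T:holesminper}--\ref{T:gpronicperfectsquare} together with the interpolation of Theorem \ref{ghsolved}, a direct computation shows $g(h)=G(h)$ for every $h$ except $h=h_\alpha+1$ with $\alpha\in S\cup R$, where instead $g(h)=G(h)+1$; this dichotomy is exactly the $C\le3$ versus $C\in\{4,5\}$ split, since in the former cases the extra hole can be installed in the next benchmark region without spending perimeter and in the latter it cannot. For the non-exceptional $h$ this finishes the converse of Theorem \ref{T:efficient}: if $A$ is crystallized then $|A|=g(h)=G(h)$, so equality holds throughout the displayed inequality, forcing $c=0$, all $a_i=1$, and $P_o=2\lceil2\sqrt{n+h}\rceil$, whence $A$ is efficiently structured. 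This gives the ``for all other $h$'' half of the theorem.

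For the exceptional values $h=h_\alpha+1$ with $\alpha\in S\cup R$, the tile budget $g(h)=G(h)+1$ no longer excludes a dual cycle or an oversized hole, so the heart of the proof is to rule these out by hand. The key input, drawn from the proof of Theorem \ref{T:holesminper}, is the sharp capacity statement that a minimal-perimeter benchmark region (the square or pronic box) of area $\alpha$ holds at most $h_\alpha$ area-one holes, attained only by acyclic configurations. I would argue that a crystallized $A$ with a dual cycle and otherwise tight data would be forced to realize $h_\alpha+1$ holes inside a minimal-perimeter region of enclosed area $n+h=\alpha$ (square case) or $\alpha-1$ (pronic case), exceeding the capacity $h_\alpha$ --- impossible; an oversized hole is excluded directly, since the outer isoperimetric bound for the enlarged enclosed area cannot be met with perimeter $2\lceil2\sqrt{n+h}\rceil$. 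Hence every crystallized $A$ in these cases is acyclic with all holes of area one, and the master identity then pins down $P_o=2\lceil2\sqrt{n+h}\rceil+2$; a short check that adding one cell to the enclosed area does not raise $\lceil2\sqrt{\cdot}\rceil$ for these residue classes shows $P_o$ exceeds the minimal outer perimeter by exactly $2$, proving the failure of minimal outer perimeter.

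The main obstacle is precisely this last step: excluding dual cycles and large holes in the regime where the tile count alone permits them. Everything there funnels into the extremal packing fact that a minimal-perimeter benchmark box admits at most $h_\alpha$ area-one holes and only acyclically, so the real work is to quote or re-establish that maximal-packing characterization from the analysis behind Theorem \ref{T:holesminper}, and then to verify the two residual arithmetic facts --- that $n+h$ lands exactly on the box area (or one below it, in the pronic case), and that $\lceil2\sqrt{\cdot}\rceil$ is insensitive to the $+1$ --- for each residue class defining $S$ and $R$.
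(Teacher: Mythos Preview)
Your framework is the paper's: your master identity is equation~(\ref{I:p_h}) rewritten, your $G(h)$ is the paper's $m(h)$, and your equality-case analysis is Lemma~\ref{Meff}. But the numerical dichotomy you set up is wrong, and this leaves a real hole in the ``all other $h$'' direction.

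You assert that $g(h)=G(h)$ exactly when $h\neq h_\alpha+1$ for $\alpha\in S\cup R$. In fact equation~(\ref{E:ginm}) gives $g(h)=m(h)+1$ for \emph{every} $h=h_\alpha+1$ with $\alpha$ square or pronic and $\alpha\neq(2^l+1)^2$ --- in particular also for the $C=3$ cases, namely $\alpha=N^2$ with $N\equiv1\pmod3$ and $\alpha=N(N+1)$ with $N\not\equiv2\pmod3$. For those $h$ you have $g(h)=G(h)+1$, so your argument ``$g(h)=G(h)$ forces equality in the master inequality, hence efficient'' does not apply. What rescues them is that $m(h)+h$ is itself square or pronic there, so $p_{\min}(m(h)+1+h)=p_{\min}(m(h)+h)+2$ and one still gets $M(m(h)+1,h)=h$: the extra tile is exactly absorbed by the jump in minimal perimeter. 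Your biconditional ``$n=G(h)$ holds precisely when $A$ is efficiently structured'' is false in the $\Leftarrow$ direction for this reason, and you need that direction.

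There is a second, smaller gap in the exceptional case. Your exclusion of an oversized hole (``the outer isoperimetric bound for the enlarged enclosed area cannot be met'') works for $\alpha\in S$, where the enclosed area becomes $N^2+1$ against an outer perimeter of $4N$. But for $\alpha\in R$ the enclosed area becomes $N(N+1)$ while $P_o=4N+2=p_{\min}(N(N+1))$, so there is no isoperimetric contradiction. The fix is the same capacity argument you already use for the dual-cycle case: the filled region is then the pronic rectangle, and $h_\alpha+1$ holes inside it contradicts the definition of $h_\alpha$. The paper sidesteps both exclusions at once: since $n=m(h)+1$ and $m(h)+h$ is \emph{not} square or pronic for $\alpha\in S\cup R$, one has $M(n,h)=h+\tfrac12$, so by Corollary~\ref{C:onebreak} exactly one efficient condition fails; the box-capacity argument shows it must be minimal outer perimeter, and the other two follow for free.
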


Finally, we consider the enumeration of crystallized polyominoes, proving that elements of the K--R sequence in \cite{kahle2018polyominoes} are the unique crystallized polyominoes with $h=(2^{2l}-1)/3$ for all $l\geq 1$.

\begin{theorem}\label{T:uniqueness}[K--R Sequence Uniqueness]
For a fixed integer $l\ge 1$, there is only one free crystallized polyomino $A$ with $h(A)=(2^{2l}-1)/3$.
\end{theorem}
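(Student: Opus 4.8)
The plan is to argue by induction on $l$, using the earlier structural theorems to reduce an arbitrary crystallized polyomino to a rigid normal form, and then a recursive (``dynamical'') substitution to descend one level. Fix $N=2^{l}+1$ and let $A$ be any free crystallized polyomino with $h(A)=h_{l}=(2^{2l}-1)/3$. Since $N(N-2)/3=h_{N^2}$ in the special case $N=2^{l}+1$, the value $h_{l}$ is exactly the square benchmark $h_{N^2}$, and $N^2\notin S\cup R$; hence Theorem \ref{T:efficientall} applies and $A$ is efficiently structured. In particular $A$ is acyclic, every hole has area one, and $A$ attains minimal outer perimeter. First I would compute that perimeter: with $n+h=N^2-1$ tiles in the filled region $\bar A$ (the polyomino together with its holes), the minimal outer perimeter equals $2\lceil 2\sqrt{N^2-1}\rceil=4N$. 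Since $\bar A$ is a simply connected polyomino of area $N^2-1$ realizing perimeter $4N$, and since the extremal hole count of Theorem \ref{T:holesminper} is tight only for the square, $\bar A$ is forced to be an $N\times N$ square with a single corner removed (the competing minimal-perimeter shape, the $(N-1)\times(N+1)$ rectangle, cannot host $h_{l}$ isolated interior holes, so it is excluded). Thus $A$ is the $N\times N$ square, minus $h_{l}$ pairwise non-adjacent interior unit holes, minus one corner tile, with acyclic dual graph.

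Next I would run the induction. The base case $l=1$ gives $h_{1}=1$, where the unique crystallized polyomino is the $3\times 3$ square minus its center and one corner (the entry $|\mathcal{G}_{1}|=1$ of Table \ref{tab:my_label}). For the inductive step I would exploit the identities $N_{l}=2N_{l-1}-1$ and $h_{l}=4h_{l-1}+1$, which are the numerical signature of the substitution map $\Phi$ underlying the K--R sequence, with $\Phi(S_{l-1})=S_{l}$. The key claim is that every efficiently structured crystallized polyomino at level $l$ lies in the image of $\Phi$, uniquely. Concretely, I would show that the central row and central column of the $N\times N$ square are forced to be solid; they split the square into four overlapping $N_{l-1}\times N_{l-1}$ quadrants, with one hole forced at the center. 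Acyclicity of the dual graph, together with the fact that only one tile is missing globally, should force each quadrant to restrict to an efficiently structured crystallized polyomino with $h_{l-1}$ holes, accounting for all $4h_{l-1}+1=h_{l}$ holes. By the inductive hypothesis each quadrant is a copy of $S_{l-1}$, and the gluing along the shared central lines is rigid, so $A=S_{l}$ as a free polyomino.

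To make the rigidity precise I would use a cyclomatic/Euler accounting. Writing $E_{sh}$ for the number of adjacent tile pairs, acyclicity means $E_{sh}=n-1$, so the dual graph is a spanning tree on the $n=N^2-h_{l}-1$ tiles; equivalently the cyclomatic number of the full ``no-corner-removed'' configuration is exactly one, its unique cycle being the outer boundary ring. The single missing corner is precisely what breaks this last cycle while preserving minimal perimeter, and its position is determined up to the symmetry of the hole pattern. The heart of the argument is then a local analysis around each hole: since the four orthogonal neighbors of a hole must be present to enclose it, acyclicity forces the surrounding ring to be broken only along a diagonal, and maximality of $h_{l}$ leaves no freedom in how these broken rings interlock. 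Propagating this constraint inward from the forced solid boundary is what pins down the central cross and, recursively, the quadrant structure.

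The step I expect to be the main obstacle is proving that the recursive decomposition is \emph{forced} rather than merely realizable: that acyclicity, together with exact hole-maximality and the single global defect, genuinely \emph{compels} the central cross to be solid and each quadrant to independently attain the level-$(l-1)$ optimum. The subtleties are that the four quadrants overlap along shared central lines (so their hole budgets are not independent a priori), that one must verify the single global defect localizes consistently across scales, and that one must rule out non-self-similar hole arrangements that might conceivably also realize $h_{l}$. This is exactly the content that the dynamical substitution method is designed to control: by showing that $\Phi$ preserves crystallization and efficient structure and admits a unique inverse on this class, the forcing becomes an invariance statement, and the induction closes.
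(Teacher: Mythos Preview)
Your opening moves are right and match the paper: efficient structure via Theorem~\ref{T:efficientall}, total area $N^2-1$, and hence boundary $D_2$ inside the $N\times N$ square. The gap is in the recursion. The quadrant decomposition does not work as written. First, the forcing of the central cross is asserted but not proved; you correctly flag this as the main obstacle, and the local ``broken ring'' propagation you sketch does not by itself pin down which $W$-cells away from the outermost interior layer are tiles versus holes. Second, even granting the cross, one of the four $N_{l-1}\times N_{l-1}$ quadrants inherits \emph{two} missing corners (the global defect and the central hole), and a direct check already at $l=2$ shows that this quadrant is disconnected, so the inductive hypothesis cannot be applied to it. Your hole-count identity $h_l=4h_{l-1}+1$ is correct, but it does not force each quadrant to realize $h_{l-1}$ holes as an efficiently structured polyomino in its own right.

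The paper's recursion avoids both issues by compressing along a spacing-$2$ sublattice rather than into quadrants. The \hyperref[checkered]{Checkerboard Lemma} forces every $B$-cell of the interior to be a tile, and Lemma~\ref{oddholes} forces every $W$-cell in an odd interior row to be a hole (a tile there could only reach the boundary through other odd-row $W$-cells, but those in the outermost layer are already holes). Together these force the fixed subarrangement $P_N$, so that only the $W$-cells in even rows, forming the $(N-3)/2\times(N-3)/2$ grid $U_N$, remain undetermined. The \hyperref[compressionlem]{Compression Lemma} and Corollary~\ref{comp:eff} then show that reading off $U_N$ as the interior of an $(N+1)/2\times(N+1)/2$ square with the \emph{same} boundary $D_2$ yields an efficiently structured polyomino at level $l-1$; the single defect is carried verbatim through every level, and iterating down to $S_1$ gives uniqueness because compression and expansion are mutual inverses.
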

The rest of the paper is organized as follows. In Section \ref{S:Theoremefficient}, we give background, establish preliminary results, and prove Theorem \ref{T:efficient}. In Section \ref{sect:obstruct}, we exhibit various obstructions to efficient crystallization and prove Theorem \ref{T:efficientall}. In Section \ref{S:Expansion}, we prove Theorems \ref{T:holesminper}, \ref{T:gpronicperfectsquare}, and \ref{T:uniqueness}. Finally, in Section \ref{S:destroy}, we prove Theorem \ref{ghsolved} by dismantling the crystallized polyominoes that we construct in Section \ref{constructions}.

\section{Background and Preliminary Results}\label{S:Theoremefficient}

\subsection{Basic Definitions and Terminology}

To better discuss various permissible sub-arrangements of polyominoes, we introduce some terminology and notation. A space in the square lattice is said to be \emph{filled} if it contains a tile, and \emph{empty} if not. 

The \emph{total area} of a polyomino is the number of tiles plus the aggregate area of all its holes, and the \textit{perimeter} $p(A)$ of a polyomino $A$ is defined to be the number of edges that are part of the topological boundary of $A$. An edge of the perimeter is on the \textit{hole perimeter} of $A$ if it is bounding a hole, and it is on the \textit{outer perimeter} otherwise. The number of edges on the hole perimeter is denoted by $p_h(A)$, and the number of edges on the outer perimeter of $A$ is denoted by $p_{o}(A)$. As an example, let $A$ be the polyomino in the center of Figure \ref{three_holes}, then $p(A)=30$, $p_o(A)=18$, and $p_h(A)=12$.

The \emph{bounding rectangle} of a polyomino $A$ is the smallest rectangle in which $A$ fits.  The \emph{boundary layer} of a polyomino is the set of squares that have at least one edge on the outer perimeter, and the \emph{interior} of a polyomino is the set of spaces which are not in the boundary layer. For a polyomino with a rectangular interior, say $(w-2)\times(l-2)$, define $D_1$ to be the arrangement in the boundary layer of the $w\times l$ rectangle in which all spaces except for the four corners are filled. Similarly, define $D_2$ to be the boundary arrangement in which all but one of the corners are filled. For $D_1$ and $D_2$ we suppress the dimensions of the rectangle, as these will always be clear from context.

\begin{figure}[H]
    \centering
    \begin{subfigure}[t]{.4\textwidth}
        \centering
        \begin{tikzpicture}
            \gardenborder{5}{6}
            \foreach \i in {1,...,4}{
                \foreach \j in {1,2,3}{
                   \fill[newcol!40] (\i*\ra,\j*\ra) --++(0:\ra) --++(90:\ra) --++(180:\ra) -- cycle; 
                }
            }
            \gardenborderLines{5}{6}
            \foreach \i in {1,...,4}{
                \foreach \j in {1,2,3}{
                   \draw[white] (\i*\ra,\j*\ra) --++(0:\ra) --++(90:\ra) --++(180:\ra) --++(270:\ra); 
                }
            }
        \end{tikzpicture}
        \label{fig:d1}
    \end{subfigure}
\hspace{.3cm}
    \begin{subfigure}[t]{.4\textwidth}
    \centering
        \begin{tikzpicture}
            \fullborder{5}{6}
            \foreach \i in {1,...,4}{
                \foreach \j in {1,2,3}{
                   \fill[newcol!40] (\i*\ra,\j*\ra) --++(0:\ra) --++(90:\ra) --++(180:\ra) -- cycle; 
                }
            }
            \fullborderLines{5}{6}
            \foreach \i in {1,...,4}{
                \foreach \j in {1,2,3}{
                   \draw[white] (\i*\ra,\j*\ra) --++(0:\ra) --++(90:\ra) --++(180:\ra) --++(270:\ra); 
                }
            }
        \end{tikzpicture}
        \label{fig:D2}
    \end{subfigure}
    \label{dmincheck}
    \caption{$D_1$ and $D_2$ for a $3\times 4$ rectangular interior.}
\end{figure}
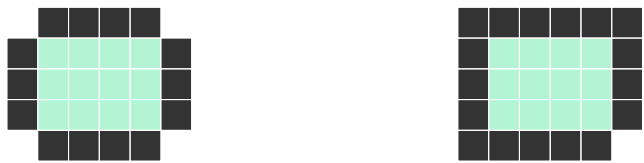

\begin{remark}\label{area}
Observe that $N^2-(N-r)(N+r)=r^2$, and $N(N+1)-(N-r)(N+1+r)=r^2+r$. These values track the decrease in area if a square or pronic rectangle is replaced by a narrower rectangle with the same perimeter. In particular the difference is always at least one for squares, and at least two for pronic rectangles.
\end{remark}

\subsection{Hole Connectivity and the Dual Graph of a Polyomino.}\label{S:dualgraphs} 
We defined the dual graph of a polyomino in Section \ref{Ss:mainthms}. In the literature, the dual graph of a polyomino is commonly referred to as a lattice animal. Let $b(A)$ be the number of edges in the dual graph of a polyomino $A$. Since the interior of $A$ must be connected, its dual graph must be connected and therefore have a spanning tree. Thus if $A$ has $n$ tiles, the dual graph has $n$ vertices and its spanning tree has $n-1$ edges. Therefore
\begin{equation}\label{I:connections}
b(A)\geq (n-1). 
\end{equation}

It is important to notice that the dual graph does not capture the topology of a polyomino. In Figure \ref{dual_graph} for example, $A$ has no holes but its dual graph contains a cycle, and the dual graph of $B$ is acyclic but $B$ has five holes.

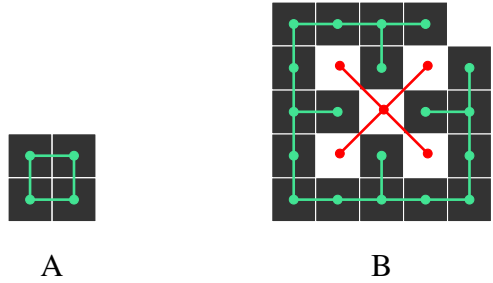
\begin{figure}[H] 
\begin{center} 
\begin{tikzpicture} 

\foreach \i/\j in {0 / 0, 0 / 1, 1 / 0,1/1} { 
\fill[black!80] (\i,\j) --++(0:1) --++(90:1) --++(180:1) -- cycle;
\draw [fill=newcol, newcol] (\i+.5, \j+.5) circle [radius=.1];
}
\foreach \i/\j in {0 / 0, 0 / 1, 1 / 0,1/1} { 
\draw[white] (\i,\j) --++(0:1) --++(90:1) --++(180:1) --++(270:1);
}
\foreach \x/\y/\z/\w in {0/0/0/1, 0/0/1/0, 1/0/1/1, 0/1/1/1  }{ 
\draw [line width=1pt, color=newcol] (.5+\x,.5+\y) -- (.5+\z, .5+\w);
}

\foreach \i/\j in {0 / 0, 0 / 1, 1 / 0, 2 / 0, 2 / 1, 0 / 2, 1 / 2, 4 / 0, 4 / 1, 3 / 0, 4 / 2, 3 / 2, 0 / 4, 0 / 3, 1 / 4, 2 / 4, 2 / 3, 4 / 3, 3 / 4 } { 
\fill[black!80] (\i+6,\j) --++(0:1) --++(90:1) --++(180:1) -- cycle;
\draw [fill=newcol, newcol] (\i+6.5, \j+.5) circle [radius=.1];
\draw[white] (\i+6,\j) --++(0:1) --++(90:1) --++(180:1) --++(270:1);
}

\foreach \x/\y in { 1/1, 3/1, 2/2, 3/3, 1/3}{
\draw [fill=red, red] (3+4+\x-0.45, 1+\y-0.45) circle [radius=.1];
}
\foreach \x/\y/\z/\w in {0/0/0/4, 0/0/4/0 , 0/4/3/4, 4/0/4/3, 0/2/1/2, 2/0/2/1, 2/3/2/4, 4/2/3/2 }{ 
\draw [line width=1pt, color=newcol] (6.5+\x,\y+.5) -- (6+.5 +\z, \w+.5);
}
\foreach \x/\y/\z/\w in {1/1/3/3, 1/3/3/1}{ 
\draw [line width=1pt, color=red] (6.55+\x,.55+\y) -- (6.55+\z, .55+\w);
}
\node (a) at (1,-1.5) [above] {A};
\node (b) at (8.5,-1.5) [above] {B};
\end{tikzpicture} 
\end{center}
\caption{Polyominoes $A$ and $B$ with their dual graphs colored in green, and the hole graph of $B$ colored in red.} \label{dual_graph} 
\end{figure}

\begin{definition}\label{D:holegraph}
The \textit{hole graph} of a polyomino $A$ is the graph whose vertices are indexed by the holes of $A$, with edges connecting two holes if their boundaries share a common vertex of the polyomino. 
\end{definition}

We refer to the hole adjacency condition as being \emph{corner adjacent}, as opposed to the \emph{edge adjacent} condition for tiles. A set of holes in a polyomino is said to be \emph{connected} if the corresponding induced subgraph of the hole graph is connected. The hole graph of a polyomino $A$ is necessarily acyclic, as a cycle corresponds to a Jordan curve in the plane which would disconnect the interior of $A$.

\subsection{A Topological Isoperimetric Inequality}

For all $n \ge 1$, we denote by $p_{\min}(n)$, the minimum perimeter that a polyomino with $n$ tiles can have. In 1976, F. Harary and H. Harborth \cite{harary1976extremal} proved that the minimum perimeter possible in a polyomino with $n$ tiles is given by 
\begin{equation}\label{perimeter1976}
p_{min}(n)=2\left\lceil2\sqrt{n}\right\rceil.
\end{equation}

In \cite{kurz2008counting}, it was proven that free polyominoes with area $\alpha\in\{N^2, N(N+1)\}$ that attain minimum perimeter are unique. These polyominoes are precisely those with the shape of a square or a pronic rectangle.

It is clear that polyominoes that are not simply connected, that have at least one hole, cannot achieve minimum perimeter as given in (\ref{perimeter1976}). The next lemma gives a lower bound for the minimum perimeter that a polyomino can have given its area and its number of holes.

\begin{lemma}\label{nop}
If $A$ is a polyomino with $n$ tiles and $h$ holes, then $p_o(A)\geq p_{min}(n+h)$. 
\end{lemma}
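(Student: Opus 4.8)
The plan is to reduce to the hole-free case, where the Harary--Harborth bound (\ref{perimeter1976}) applies directly. Starting from $A$, I would form a new polyomino $A'$ by filling in every hole, that is, by placing a tile in every space lying in a bounded component of the complement of $A$. Each hole has area at least one and there are $h$ of them, so the total number of tiles added is at least $h$, giving $|A'|\geq n+h$. The polyomino $A'$ is connected, since we only add tiles to an already connected polyomino, and it is simply connected, since every bounded complement component has been removed; thus $A'$ is a hole-free polyomino.

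The key structural observation is that $p_o(A)=p(A')$. The outer perimeter of $A$ consists precisely of the boundary edges adjacent to the \emph{unbounded} component of the complement of $A$. Filling the holes alters only the bounded complement components; it leaves the unbounded component, and hence the edges separating $A$ from it, untouched. Since $A'$ has no holes, $p(A')=p_o(A')$, and the unbounded complement component of $A'$ coincides with that of $A$, so $p_o(A)=p_o(A')=p(A')$.

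From here the inequality follows by monotonicity. Applying (\ref{perimeter1976}) to the hole-free polyomino $A'$ yields $p(A')\geq p_{min}(|A'|)$. Because $p_{min}(m)=2\left\lceil 2\sqrt{m}\right\rceil$ is nondecreasing in $m$ and $|A'|\geq n+h$, we get $p(A')\geq p_{min}(|A'|)\geq p_{min}(n+h)$, and therefore $p_o(A)\geq p_{min}(n+h)$, as claimed.

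The one step demanding genuine care is the identity $p_o(A)=p(A')$: although geometrically transparent, it rests on the topological definition of a hole as a bounded complement component, and one must check that the filling operation genuinely does not perturb the boundary shared with the unbounded region. I expect this to be the main (if modest) obstacle; the remaining arithmetic---bounding the added area below by $h$ and invoking the monotonicity of $p_{min}$---is routine.
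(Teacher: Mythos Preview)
Your proof is correct and follows essentially the same approach as the paper: fill the holes to obtain a simply connected polyomino (the paper calls it $B$), note that this adds at least $h$ tiles, observe that the outer perimeter is unchanged, apply the Harary--Harborth bound~(\ref{perimeter1976}), and finish with the monotonicity of $p_{min}$. Your write-up is slightly more explicit about why $p_o(A)=p(A')$, but the argument is identical in substance.
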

\begin{proof}
Let $H$ be the aggregate area of all the holes of $A$. Then $H\geq h$ because the minimum area that a hole can have is an area of one. Let $B$ be the polyomino obtained by completely covering the holes of $A$ with $H$ tiles. Observe that $B$ has $n+H$ tiles, and by equation (\ref{perimeter1976}), $p(B) \geq p_{min}(n+ H)$. Using the fact that the function $h(x)=2\left\lceil2\sqrt{x}\right\rceil$ is a non-decreasing function we get that $p(B) \geq p_{min}(n+h)$. Then, because $p(B)=p_o(B)$ and $A$ and $B$ have the same outer perimeter, we conclude that 
\begin{equation} \label{holesnominp}
p_o(A)\geq p_{min}(n+h).
\end{equation}

\end{proof}



Lemma \ref{nop} and equation (\ref{perimeter1976}) are the reasons of why we have defined in Section \ref{Ss:mainthms} that a polyomino with $n$ tiles and $h$ holes has minimal outer perimeter if its outer perimeter is $2\left\lceil2\sqrt{n+h}\right\rceil$. In what follows, using techniques introduced in \cite{kahle2018polyominoes}, we give an upper bound for the number of holes that a polyomino can have. 

Let $A$ be a polyomino with $n$ tiles. Then $4n = 2b(A) + p(A)$ because each square tile has 4 edges and these edges are either on the perimeter of $A$ or connecting two tiles of $A$. Thus, using $p(A)= p_o(A)+ p_h(A)$ we get
%
\begin{equation}\label{I:p_h}
    p_h(A)= 4n-2b(A)-p_{o}(A).
\end{equation}
The minimum number of edges that a hole can have is four, thus from (\ref{I:p_h}) we get
\begin{equation*}\label{E:ubh(A)}
    h(A)\leq \frac{p_h(A)}{4} = \frac{4n-2b(A)-p_{o}(A)}{4}.
\end{equation*}
Then, from inequalities (\ref{I:connections}) and (\ref{holesnominp}) we get
\begin{equation}\label{E:ubh(A)2}
h(A)\leq \frac{4n-2(n-1)-p_{min}(n+h)}{4}.
\end{equation}

 This is a topological isoperimetric inequality that bounds the number of holes that a polyomino can have depending on the structure of its dual graph, the area of its holes, and its outer perimeter. The next definition establishes notation for this upper bound as a function of $n$ and $h$.

\begin{definition}\label{E:M(n,h)}
For all natural numbers $n$ and $h$, define
\begin{equation}\label{ublb}
M(n,h) = \frac{2n+2-p_{min}(n+h)}{4}.
\end{equation}
\end{definition}
From inequality (\ref{E:ubh(A)2}), this function then gives a necessary condition for the existence of a polyomino, which is intricately tied to efficient structure. 

\begin{lemma}\label{L:M(n,h)}
If $M(n,h)<h$ for some natural numbers $n$ and $h$, then a polyomino with $n$ tiles and $h$ holes does not exist. And for all $h \geq 1$, we have $h \leq M(g(h),h)$.  
\end{lemma}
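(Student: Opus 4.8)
The plan is to observe that both halves of the lemma are immediate consequences of the isoperimetric inequality (\ref{E:ubh(A)2}) derived above, which asserts that any polyomino $A$ with $n$ tiles and $h$ holes satisfies $h(A) \le M(n,h)$. Indeed, the right-hand side of (\ref{E:ubh(A)2}) is $\tfrac{4n-2(n-1)-p_{min}(n+h)}{4} = \tfrac{2n+2-p_{min}(n+h)}{4}$, which is exactly $M(n,h)$ by Definition \ref{E:M(n,h)}. So the only real work is to read off the two desired forms from this single bound.

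For the first statement I would argue by contraposition. Inequality (\ref{E:ubh(A)2}) was established for an arbitrary polyomino with $n$ tiles and $h$ holes, using only $b(A) \ge n-1$ from (\ref{I:connections}), the bound $p_o(A) \ge p_{min}(n+h)$ from (\ref{holesnominp}), and the fact that every hole bounds at least four edges. Hence the existence of such a polyomino forces $h \le M(n,h)$, and consequently, if instead $M(n,h) < h$, then no polyomino with exactly $n$ tiles and $h$ holes can exist. I would emphasize that the comparison $M(n,h)<h$ is well posed even when $M(n,h)$ is not an integer, so no rounding issues arise.

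For the second statement I would use that the minimum defining $g(h) = \min_{h(A)=h}|A|$ is attained: there is a crystallized polyomino $A$ with $h(A)=h$ and $|A|=g(h)$. Applying (\ref{E:ubh(A)2}) to this polyomino with $n=g(h)$ yields $h = h(A) \le M(g(h),h)$, as required.

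I do not anticipate a genuine obstacle, since the analytic content was already carried out in assembling the chain (\ref{I:connections})--(\ref{holesnominp})--(\ref{E:ubh(A)2}); this lemma merely repackages that bound into the two forms used later. The only points to verify are the trivial algebraic identification of the upper bound with $M(n,h)$ and the observation that a minimizer realizing $g(h)$ exists by definition, so no extremal polyomino needs to be constructed at this stage.
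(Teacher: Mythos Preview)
Your proposal is correct and matches the paper's approach exactly: the paper states this lemma without an explicit proof, treating both claims as immediate consequences of inequality (\ref{E:ubh(A)2}) and Definition \ref{E:M(n,h)}, which is precisely what you do.
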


\begin{lemma}\label{Meff}
A polyomino with $n$ tiles and $h$ holes is efficiently structured if and only if $h = M(n, h)$. 
\end{lemma}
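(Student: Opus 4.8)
The plan is to recognize that $M(n,h)$ is exactly the common upper bound produced by chaining together the three inequalities already assembled in the derivation of \eqref{E:ubh(A)2}, and that each of those three inequalities is tight precisely when one of the three defining conditions of efficient structure holds. So the whole proof reduces to tracking when the combined bound $h(A)\le M(n,h)$ is attained with equality.

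First I would write the chain out explicitly. Starting from the identity \eqref{I:p_h}, namely $p_h(A)=4n-2b(A)-p_o(A)$, and using that every hole contributes at least four boundary edges, I get $h=h(A)\le p_h(A)/4$. Substituting $b(A)\ge n-1$ from \eqref{I:connections} and $p_o(A)\ge p_{min}(n+h)$ from Lemma \ref{nop} yields
\[
 h \;\le\; \frac{4n-2b(A)-p_o(A)}{4} \;\le\; \frac{4n-2(n-1)-p_{min}(n+h)}{4}\;=\;M(n,h).
\]
Thus $h\le M(n,h)$ always, and equality $h=M(n,h)$ forces each of the three inequalities used along the way to be an equality.

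Then I would match each equality to a structural condition. The equality $b(A)=n-1$ holds if and only if the dual graph is a spanning tree, i.e.\ if and only if $A$ is acyclic; the equality $p_o(A)=p_{min}(n+h)$ is, by definition, minimal outer perimeter; and the equality $h(A)=p_h(A)/4$ holds if and only if each hole is bounded by exactly four edges. Assembling the three characterizations, $h=M(n,h)$ holds exactly when $A$ is acyclic, has all holes bounded by four edges, and attains minimal outer perimeter, giving both directions of the biconditional against Definition \ref{D:efficientlystructured} simultaneously.

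The only substantive point — and it is minor — is the claim that a hole bounded by exactly four edges is the same as a hole of area one, so that the third equality condition really matches the defining property ``each hole has an area of one.'' I would dispatch this by the same isoperimetric reasoning behind \eqref{perimeter1976} applied to a single hole: a bounded connected lattice region of area $a\ge 1$ has boundary at least four, with equality only when $a=1$, so area one and four bounding edges are equivalent. Everything else is bookkeeping on the inequality chain, since the individual inequalities and their equality cases are already established earlier in this section.
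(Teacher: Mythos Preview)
Your proposal is correct and follows essentially the same approach as the paper: both recognize that $M(n,h)$ is obtained by plugging the three minimal values into the identity \eqref{I:p_h}, so that $h=M(n,h)$ is equivalent to simultaneous equality in all three bounds $b(A)\ge n-1$, $p_o(A)\ge p_{min}(n+h)$, and $4h\le p_h(A)$. The paper phrases the reverse direction as a contrapositive (failure of any one condition forces $h<M(n,h)$) and handles the hole condition by noting that a hole of area at least two has at least six edges, whereas you track equality directly and invoke the isoperimetric fact that four bounding edges forces area one; these are the same argument in slightly different dress.
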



\begin{proof}
 Let $A$ be a polyomino with $n$ tiles and $h$ holes. It is essentially by definition that if $A$ is efficiently structured then $M(n,h)=h$, as the three characterestics of efficient structure are exactly where minimal values are plugged into equation (\ref{I:p_h}) to get inequality (\ref{E:ubh(A)2}).
 
 Now suppose $A$ fails to achieve one of the three condtitions of efficient structure. If $A$ is not acyclic, then $b(A) > n-1$. If it does not have minimal outer perimeter, then $p_o(A) > p_{min}(n+h)$. And finally if there is a hole of area at least two, then there is at least one hole with at least six edges, and thus $h \leq (p_h(A)-2)/4$. By equation (\ref{I:p_h}) and inequality (\ref{E:ubh(A)2}), all three issues cause $h < M(n,h)$, by decreasing the numerator in the first two cases and increasing the denominator in the third. So if $M(n,h)=h$, then $A$ is efficiently structured.
 
\end{proof}

In \cite{kahle2018polyominoes}, it was proven that the crystallized polyominoes defined in the K-R sequence satisfy $M(n,h)=h$, and are therefore efficiently structured.

\begin{corollary}\label{C:onebreak}
 If a polyomino $A$ exists with $n$ tiles and $h$ holes such that $M(n,h)=h+1/2$, then exactly one of the following three things occurs: 1) $A$ has a single dual cycle; 2) $A$ has a single hole with area two, and the rest have area one; or 3) $p_o(A)=p_{min}(n+h)+2$.
\end{corollary}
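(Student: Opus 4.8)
The plan is to treat this corollary as the exact quantitative refinement of Lemma \ref{Meff}: whereas that lemma detects the three failures of efficient structure qualitatively, here I would track precisely how much ``slack'' each failure contributes to the gap $M(n,h)-h$ and show that a total slack of $1/2$ can be realized in only three mutually exclusive ways.

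First I would convert the exact identity (\ref{I:p_h}), namely $p_h(A)=4n-2b(A)-p_o(A)$, into a single bookkeeping equation. Introduce the three defect quantities
\[
c=b(A)-(n-1), \qquad q=p_o(A)-p_{min}(n+h), \qquad d=p_h(A)-4h,
\]
which are nonnegative by (\ref{I:connections}), by Lemma \ref{nop}, and by the fact that every hole has at least four boundary edges, respectively. Substituting $b(A)=(n-1)+c$ and $p_o(A)=p_{min}(n+h)+q$ into (\ref{I:p_h}) and comparing with the definition (\ref{ublb}) of $M(n,h)$ yields the master identity
\[
4\bigl(M(n,h)-h\bigr)=d+2c+q,
\]
whose right-hand side must equal $2$ under the hypothesis $M(n,h)=h+1/2$.

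The second step is a parity argument to pin down the solutions of $d+2c+q=2$. The quantity $c$ is the cycle rank $b(A)-n+1$ of the connected dual graph, hence a nonnegative integer, so $2c$ is even. Since $p(A)=4n-2b(A)$ is even and each hole boundary is a closed grid curve of even length, $p_h(A)$ is even; as $p_{min}$ is even by definition, both $d$ and $q$ are even and nonnegative. Thus $d+2c+q=2$ expresses $2$ as a sum of three even nonnegative summands, forcing exactly one of them to equal $2$ while the other two vanish. This already establishes the ``exactly one'' structure of the conclusion.

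Finally I would translate each of the three cases. If $c=1$ (and $d=q=0$), the dual graph has cycle rank one, i.e.\ $A$ has a single dual cycle; if $q=2$ (and $c=d=0$), then $p_o(A)=p_{min}(n+h)+2$ directly. The case $d=2$ is the only one requiring a short combinatorial argument, and is where I expect the main friction: I must show that a hole defect of exactly $2$ forces exactly one hole of area two with all others of area one. This follows because each hole, viewed as a filled region, is itself a polyomino, so by (\ref{perimeter1976}) a hole of area one contributes exactly $4$ edges (defect $0$), a hole of area two is a domino of perimeter $6$ (defect $2$), and any hole of area at least three has $p_{min}\ge 2\lceil 2\sqrt{3}\rceil=8$ (defect at least $4$); since the individual hole defects are nonnegative and sum to $d=2$, precisely one area-two hole can occur and every other hole has area one. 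Assembling the three cases completes the proof.
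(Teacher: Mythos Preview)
Your proof is correct and follows exactly the approach the paper indicates: the paper does not give a standalone proof of this corollary at all, stating only that it ``is clear from the construction of $M(n,h)$ and the proof of Lemma \ref{Meff}.'' Your master identity $4(M(n,h)-h)=d+2c+q$ makes explicit the bookkeeping that the paper leaves implicit in the proof of Lemma \ref{Meff}, and your parity argument together with the $d=2$ case analysis (area-one holes contribute defect $0$, the unique area-two shape is a domino contributing defect $2$, area $\ge 3$ forces defect $\ge 4$) cleanly supplies the details the paper omits.
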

 
This is clear from the construction of $M(n,h)$ and the proof of Lemma \ref{Meff}. To understand how how equation (\ref{ublb}) changes with respect to the minimal outer perimeter of a polyomino, we observe that
\begin{equation}\label{E:pminatsqpr}
    p_{min}(n+h+1)=
    \begin{cases}
    p_{min}(n+h)+2 & \text{ if } n+h \text{ is square or pronic}\\
    p_{min}(n+h) & \text{ else. } 
    \end{cases}
\end{equation}

Adding a single tile also increases $2n$ to $2n+2$. Then for a fixed $h \geq 1$, $M(n,h)$ is a non-decreasing function of $n$ such that
\begin{equation}\label{E:Mnchange}
    M(n+1,h)-M(n,h)=
    \begin{cases}
    0 & \text{ if } n+h \text{ is a square or pronic number}\\
    1/2 & \text{ else } 
    \end{cases}
\end{equation}

We now give a proof of Theorem \ref{T:efficient}, which states that any efficiently structured polyomino is crystallized.

\begin{proof}[\textbf{Proof of Theorem \ref{T:efficient}}]
 Let $A$ be an efficiently structured polyomino with $n$ tiles and $h$ holes. By Lemma \ref{Meff}, this is equivalent to $h=M(n,h)$, and by equation (\ref{E:Mnchange}) we know that $M(n-1,h)\leq M(n,h)$. If $M(n-1,h)< M(n,h)$, then by Lemma \ref{L:M(n,h)} there does not exist a polyomino with $n-1$ tiles and $h$ holes, and thus $A$ is crystallized.
 
 Suppose instead that $M(n-1,h) = M(n,h)$, and assume that there exists a polyomino $B$ with $n-1$ tiles and $h$ holes. Then by equation (\ref{E:Mnchange}) we get that $n-1+h$ is a square or pronic number, and $B$ is efficiently structured by Lemma \ref{Meff}. By Remark \ref{area}, a polyomino with minimal outer perimeter and total area equal to a square and pronic number must be constructed in a square or pronic rectangle, and thus the boundary layer of $B$ is completely filled, which gives a cycle. This contradicts efficient structure for $B$, and therefore there does not exist a polyomino with $n-1$ tiles and $h$ holes. Thus, $A$ is crystallized.  
\end{proof}
 
 The converse of Theorem \ref{T:efficient} is not true. Figure \ref{F:crystallizednotM} shows crystallized polyominoes which fail to attain minimal outer perimeter and thus are not efficiently structured. 
\vspace{-.2cm}
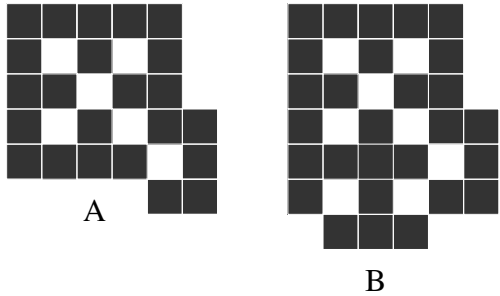
\begin{figure}[H] 
\begin{center} 
\begin{tikzpicture}[scale=.8]

\foreach \i \j in { 0 / 2, 0 / 3, 0 / 4, 0 / 5, 0/6, 1/6, 2/6, 3/6, 4/6, 4/4, 4/3,  5/2, 5/3, 1/2, 1/4,  2/2, 2/3 , 2/5, 2/6, 3/2, 3/4, 4/5 , 5/1, 4/1 } { 
\fill[black!80] (\i,\j) --++(0:1) --++(90:1) --++(180:1) -- cycle;
} 

\foreach  \i \j in {1/0, 2/0,3/0, 0/1, 0 / 2, 0 / 3, 0 / 4, 0 / 5, 0/6, 1/6, 2/6, 3/6, 4/6, 4/4, 4/3, 4/1, 5/1, 5/2, 5/3, 1/2, 1/4, 2/1, 2/2, 2/3 , 2/5, 2/6, 3/2, 3/4, 4/5, 5/1, 4/1  } {
\draw[white] (\i,\j) --++(0:1) --++(90:1) --++(180:1) --++(270:1);
		}



\foreach \i \j in {1/0, 2/0,3/0, 0/1, 0 / 2, 0 / 3, 0 / 4, 0 / 5, 0/6, 1/6, 2/6, 3/6, 4/6, 4/4, 4/3, 4/1, 5/1, 5/2, 5/3, 1/2, 1/4, 2/1, 2/2, 2/3 , 2/5, 2/6, 3/2, 3/4, 4/5  } { 
\fill[black!80] (\i+8,\j) --++(0:1) --++(90:1) --++(180:1) -- cycle;
} 

\foreach  \i \j in {1/0, 2/0,3/0, 0/1, 0 / 2, 0 / 3, 0 / 4, 0 / 5, 0/6, 1/6, 2/6, 3/6, 4/6, 4/4, 4/3, 4/1, 5/1, 5/2, 5/3, 1/2, 1/4, 2/1, 2/3 , 2/5, 2/6, 3/2, 3/4, 4/5, 4/2 } {
\draw[white] (\i+8,\j) --++(0:1) --++(90:1) --++(180:1) --++(270:1);
		}

\node (a) at (2.5,.5) [above] {A};
\node (b) at (10.5,-1.5) [above] {B};

\end{tikzpicture} 
\end{center} 
\vspace{-.5cm}
\caption{Polyominoes $A$ and $B$ are crystallized polyominoes that are not efficiently structured. Although both are acyclic with each hole having an area of one, neither has minimal outer perimeter. $A$ has 23 tiles and 6 holes and $B$ has 28 tiles and 8 holes. One can see in the calculation that $6 < M(6,23)=6.5$, and $8 < M(8,28)=8.5$.} 
\label{F:crystallizednotM} 
\end{figure}




In the next section we develop tools to understand for which values of $h$ a crystallized polyomino with $h$ holes may fail to be efficiently structured.

\section{Obstructions to Crystallization}\label{sect:obstruct}

Define $m(h)=\min\{n : M(n,h)\ge h\}$. By equation (\ref{E:ubh(A)2}) this is a theoretical lower bound for $g(h)$, and therefore if a polyomino exists with $m(h)$ tiles and $h$ holes, this is an immediate proof that $g(h)=m(h)$.

In this section we establish several obstructions to $g(h)$ hitting the optimal value of $m(h)$. In particular, we aim to show that Theorem \ref{ghsolved} is in fact equivalent to stating that for all $h\ge 1$,
\begin{equation}\label{E:ginm}
    g(h)=
    \begin{cases}
        m(h)+1  & \text{ if } h=h_{\alpha}+1 \text{ for square or pronic } \alpha\neq(2^l+1)^2\\
        m(h) & \text{ else.}\\
\end{cases}
\end{equation}

This asserts that $m(h)$ is the right answer except when trying to fit one too many holes into an optimal shape of minimal outer perimeter, and Theorem \ref{T:efficientall} will follow by characterizing when $M(m(h)+1,h)=h$. To establish these exceptions at values of $h_{\alpha}+1$, we begin by examining the numerical and geometric constraints of $m(h)$.

\begin{lemma}\label{L:propertiesm}
For every $h$, $M(m(h),h)=h$. Furthermore, if $0 < \alpha-(m(h)+h) < 3$ for some square or pronic number $\alpha$, then $m(h+1)=m(h)+3$, and otherwise $m(h+1)=m(h)+2$.
\end{lemma}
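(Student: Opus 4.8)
The plan is to reduce both claims to a single monotone step-function in the total area $T=n+h$. Set $\phi(T)=\tfrac{2T+2-p_{\min}(T)}{4}$, so that $M(n,h)=\phi(n+h)-\tfrac h2$ and the defining condition $M(n,h)\ge h$ is equivalent to $\phi(n+h)\ge\tfrac{3h}{2}$. Subtracting consecutive values and invoking \eqref{E:pminatsqpr} gives $\phi(T+1)-\phi(T)=0$ when $T$ is square or pronic and $\tfrac12$ otherwise; in particular $\phi$ is non-decreasing, takes values in $\tfrac12\mathbb Z$, and rises by exactly $\tfrac12$ at each step except that it is flat immediately after a square or pronic argument. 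Writing $t(h):=m(h)+h$, the definition of $m$ becomes $t(h)=\min\{T:\phi(T)\ge\tfrac{3h}{2}\}$.

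First I would prove $M(m(h),h)=h$. Since every step of $\phi$ is $0$ or $\tfrac12$ and $\tfrac{3h}{2}\in\tfrac12\mathbb Z$, the function cannot jump over the value $\tfrac{3h}{2}$: the step landing on $t(h)$ cannot be flat (otherwise $\phi$ would already have reached $\tfrac{3h}{2}$ at $t(h)-1$, contradicting minimality), and it cannot overshoot, so $\phi(t(h))=\tfrac{3h}{2}$ exactly. Translating back gives $M(m(h),h)=\phi(t(h))-\tfrac h2=h$.

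For the increment I would compare the thresholds at $h$ and $h+1$. Because $\phi(t(h))=\tfrac{3h}{2}$, we have $t(h+1)=\min\{T:\phi(T)\ge\tfrac{3h}{2}+\tfrac32\}$, so $t(h+1)-t(h)$ is the number of unit steps needed for $\phi$ to gain an additional $\tfrac32$. Three non-flat steps supply this gain, and each flat step traversed beforehand costs one extra step, so $t(h+1)-t(h)=3+(\text{number of flat steps traversed})$, where any such flat step necessarily lies in $\{t(h),t(h)+1,t(h)+2\}$. The decisive input is that square and pronic numbers are spread out: the gap from $N^2$ to $N(N+1)$ is $N$ and from $N(N+1)$ to $(N+1)^2$ is $N+1$, hence every gap between consecutive such numbers that are $\ge 16$ is at least $4$. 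Since $t(h)=\min\{T:\phi(T)\ge\tfrac{3h}{2}\}$ is non-decreasing in $h$ (the threshold increases and $\phi$ is non-decreasing) and $t(3)=16$, we have $t(h)\ge16$ for all $h\ge3$, so the window $\{t(h),\dots,t(h)+3\}$ contains at most one square or pronic number. Thus $t(h+1)-t(h)\in\{3,4\}$, equal to $4$ exactly when some square or pronic $\alpha$ satisfies $0\le\alpha-t(h)\le2$ (this includes $\alpha=t(h)$ itself, the ``one-too-many-holes'' configurations where the optimal area is already square or pronic), and therefore $m(h+1)-m(h)=\big(t(h+1)-t(h)\big)-1\in\{2,3\}$, taking the value $3$ under that same condition. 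The two cases $h=1,2$ are checked by hand and lie within the already tabulated range.

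The step I expect to be the crux is the flat-step count in the increment argument: one must guarantee that at most one flat step is met before the three active steps finish, since otherwise the increment could exceed $3$. This is exactly what the gap estimate on consecutive square and pronic numbers rules out, once the bound $t(h)\ge16$ for $h\ge3$ eliminates the only small configurations---near $6$, $9$, and $12$---where two such numbers sit within distance $3$. A secondary but essential subtlety is the half-integrality argument underlying the first claim, which is what pins $\phi$ exactly on the threshold $\tfrac{3h}{2}$ and prevents an overshoot that would force $M(m(h),h)$ strictly above $h$.
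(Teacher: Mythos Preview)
Your argument follows the same skeleton as the paper's---half-integrality of $M$ together with the $0$/$\tfrac12$ step behavior from \eqref{E:pminatsqpr}---but your repackaging via $\phi(T)$ and $t(h)=m(h)+h$ is cleaner and lets you state the increment as ``three active steps plus the number of flat steps encountered.'' Two points are worth flagging. First, you supply the justification that at most one flat step can occur in the relevant window, via the gap estimate between consecutive square and pronic numbers; the paper simply asserts ``adding a third tile does not cross another square or pronic threshold'' without argument, so your version is more complete here. Second, your condition $0\le\alpha-t(h)\le 2$ correctly includes the case $\alpha=t(h)$, whereas the lemma as stated uses the strict inequality $0<\alpha-(m(h)+h)$. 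Your version is the right one: for instance $t(2)=12$ is pronic, and indeed $m(3)-m(2)=13-10=3$, which your condition predicts but the strict inequality does not. This boundary case is exactly the situation the paper later calls $m(t_\alpha)+t_\alpha=\alpha$ in equations \eqref{E:1sttalpha}--\eqref{Toffset}, so you have sharpened the statement in a way consistent with how it is actually used downstream.
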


\begin{proof}
 The perimeter of a polyomino is always even, so $2n+2-p_{min}(n+h)$ is even, and $M(n,h)=C/2$ for some integer $C$. Fix $h \ge 1$. For $n=1$, $p_{min}(1+h) > 6$ and $$2n+2-p_{min}(n+h)=4-p_{min}(1+h) < 0.$$ Then by equation (\ref{E:Mnchange}), increasing $n$ by one increases $M(n,h)$ by either 0 or $1/2$, and thus $M(n,h)$ hits every positive multiple of $1/2$ as $n$ increases indefinitely. So $M(m(h),h)=h$ and the inequality in its definition is really equality.
 
 Suppose we have $n=m(h)$ for some $h$. Then since $p_{min}$ is non-decreasing,
 \begin{equation}
    \begin{split}
        M(n+1,h+1)&=\frac{2n+2+2-p_{min}(n+h+2)}{4}\\[8pt]
        &\leq\frac{2n+2-p_{min}(n+h)}{4}+\frac{1}{2}\\[8pt]
        &=h+\frac{1}{2}.
    \end{split}
 \end{equation}
 So $m(h+1)\ge m(h)+2$ for all $h$. By the same calculation, $$M(m(h)+2,h+1)=h+1$$ whenever $p_{min}(n+h+3)=p_{min}(n+h)$. By equation (\ref{E:pminatsqpr}) this will be the case unless $0 < \alpha-(m(h)+h) < 3$ for some square or pronic number $\alpha$, in which case $p_{min}(n+h+3)=p_{min}(n+h)+2$, and then 
 \begin{equation}
     \begin{split}
         M(m(h)+2,h+1)&=\frac{2n+4+2-p_{min}(n+h+3)}{4}\\[8pt]
         &=\frac{(2n+2-p_{min}(n+h))+2}{4}\\[8pt]
         &=h+1/2.
     \end{split}
 \end{equation} 
 Adding a third tile does not cross another square or pronic threshold, so by equation (\ref{E:Mnchange}) this adds another 1/2 to the function and $m(h+1)=m(h)+3$.
\end{proof}

Similar to the polyiamond case in \cite{malen2019polyiamonds}, this points to each additional hole requiring two extra tiles in general, and then a third tile when the total area of the shape expands past a threshold of $N^2$ or $N(N+1)$. To study the structure of crystallization near these thresholds, for a fixed $\alpha\in\{N^2,N(N+1)\}$ define 
\begin{equation}
    t_{\alpha}=\max\{h:m(h)+h\le \alpha\}.
\end{equation} 
Since $g(h)\ge m(h)$, this is a theoretical upper bound for $h_\alpha$. We derive equations for this sequence, and characterize these cases by how close $m(t_{\alpha})+t_{\alpha}$ gets to $\alpha$.

Suppose that $m(h)+h=N^2$, and note that such an $h$ is necessarily $h=t_{N^2}$. Then $M(N^2-h,h)=h$ by definition of $m(h)$, so
\begin{equation}\label{E:1sttalpha}
\begin{split}
4h & = 2(N^2-h)+2-4N\\
& = 2N^2-2h+2-4N\\
\Longrightarrow \ \ \ h &= \frac{(N-1)^2}{3}.
\end{split}
\end{equation}
This is an integer if and only if $N\equiv 1 \text{ mod 3}$. It is straightforward to check by similar calculations that the values
\begin{equation}\label{Toffset}
    m(t_{\alpha})+t_{\alpha}=
    \begin{cases}
        N^2-1 & \text{ if } \alpha=N^2 \text{ and } N\not\equiv 1 \text{ mod } 3\\
        N(N+1) & \text{ if } \alpha=N(N+1) \text{ and } N\not\equiv 2 \text{ mod } 3\\
        N(N+1)-2 & \text{ if } \alpha=N(N+1) \text{ and } N\equiv 2 \text{ mod } 3,\\[10pt]
    \end{cases}
\end{equation}
along with equation (\ref{E:1sttalpha}) yield that

\begin{equation}\label{tsq}
    t_{N^2}= 
    \begin{cases}
        \frac{(N-1)^2}{3} & \text{ if $N\equiv 1$ mod 3}\\[8pt]
        \frac{N(N-2)}{3} & \text{else},\\[8pt]
    \end{cases}
\end{equation}
and

\begin{equation}\label{tpr}
    t_{N(N+1)}= 
    \begin{cases}
        \frac{N(N-1)}{3} & \text{ if $N\equiv 0$ or 1 mod 3}\\[8pt]
        \frac{(N+1)(N-2)}{3} & \text{ if $N\equiv 2$ mod 3}.\\[8pt]
    \end{cases}
\end{equation}

Lemma \ref{L:propertiesm} describing the jumps in $m(h)$ ensures that the values in equation (\ref{Toffset}) are all maximal for $m(h)+h\le\alpha$, and so the values of $h$ solved for in these calculations give the values of $t_{\alpha}$ in (\ref{tsq}) and (\ref{tpr}). 

Observe that Theorem \ref{T:holesminper} is now equivalent to stating that $h_{\alpha}=t_{\alpha}$ if $\alpha=(2^l+1)^2$, and otherwise $h_{\alpha}=t_{\alpha}-1$. Or in words, only the K--R sequence from \cite{kahle2018polyominoes} is able attain the maximal number of holes theoretically possible in a square or pronic rectangle. 

And by the jumps of $m(h)$, Theorem \ref{T:gpronicperfectsquare} asserts that $g(h_{\alpha})=m(h_{\alpha})$, which we prove by construction in Section \ref{constructions}. Then the fact that equation (\ref{E:ginm}) is equivalent to Theorem \ref{ghsolved} follows immediately from Theorems \ref{T:holesminper} and \ref{T:gpronicperfectsquare} and the jumps in $m(h)$. Assuming these three main theorems, for which we provide proofs in Sections \ref{constructions} and \ref{S:destroy}, we now prove the structural characterization given in Theorem \ref{T:efficientall}.

\begin{proof}[\textbf{Proof of Theorem \ref{T:efficientall}}]
By equation (\ref{E:ginm}), a crystallized polyomino with $h$ holes will either have $m(h)$ tiles and be efficiently structured by Lemma \ref{Meff}, or it will have $g(h)=m(h)+1$ tiles. By equation (\ref{E:Mnchange}), $M(m(h)+1,h) = h$ when $m(h)+h$ is a square or pronic number, and otherwise $M(m(h)+1,h) > h$. And by equations (\ref{E:1sttalpha}) and (\ref{Toffset}), $m(h)+h$ is a square or pronic number precisely when $h=t_{\alpha}$ for $\alpha\notin S\cup R\cup \{N^2 : N=2^l+1\}$. 

Therefore, by equation (\ref{E:ginm}), the set $S\cup R$ is exactly the set of $\alpha$ for which $$g(h_{\alpha}+1)=m(h_{\alpha}+1)+1,$$
and $m(h_{\alpha}+1)+1$ tiles and $h_{\alpha}+1$ holes is not efficiently structured. 

It only remains to show that the efficient condition which fails for $\alpha\in S\cup R$ is minimal outer perimeter. Then let $h=h_{\alpha}+1$ for $\alpha\in S\cup R$. By Lemma \ref{L:propertiesm} and equation (\ref{E:ginm}), $g(h_\alpha +1)-g(h_{\alpha})=3$. But observe that the constant $C$ in Theorem \ref{T:gpronicperfectsquare} is either four or five for $\alpha\in S\cup R$. Then $$g(h_{\alpha}+1)+(h_{\alpha}+1)=(g(h_{\alpha})+3)+h_{\alpha}+1=g(h_{\alpha})+h_{\alpha}+4,$$ and
$$\alpha -1 \le g(h_{\alpha})+h_{\alpha}+4\le \alpha.$$ 
Therefore $$p_{min}(g(h_{\alpha}+1)+(h_{\alpha}+1))=p_{min}(\alpha).$$ 

By Remark \ref{area}, all bounding rectangles with outer perimeter $p_{min}(\alpha)$ which are not a square or pronic rectangle have area at most $\alpha-1$. A construction with $g(h_{\alpha}+1)$ tiles and $h_{\alpha}+1$ holes cannot exist in a shape of area at most $\alpha -1$, because it either has area $\alpha$ and is too big, or it has area $\alpha-1$ and by the reasoning in the proof of Theorem \ref{T:efficient} it would necessarily fill the entire boundary layer and create a cycle. By definition of $h_{\alpha}$, an extra hole cannot fit in the square or pronic rectangle of area $\alpha$, and therefore such a construction does not achieve minimal outer perimeter.

By equation (\ref{E:Mnchange}), we know that $$M(m(h_{\alpha})+1,h_{\alpha}+1)=M(m(h_{\alpha}+1),h_{\alpha}+1)+1/2=(h_{\alpha}+1)+1/2.$$
Then by Corollary \ref{C:onebreak}, only one efficient condition can fail. So a crystallized polyomino with $h_{\alpha}+1$ holes for $\alpha\in S\cup R$ is acyclic, each of its holes has an area of one, and it fails to achieve minimal outer perimeter, and all other crystallized polyominoes are efficiently structured.

\end{proof}


We proceed by showing that $g(t_{\alpha}) > m(t_{\alpha})$ whenever $\alpha \neq (2^l+1)^2$, and determine that this will also require increasing the outer perimeter to get a crystallized polyomino with $t_{\alpha}$ holes.

\subsection{Checkerboard Obstructions}
Let the $\emph{checkerboard partition}$ of a rectangle in the square lattice refer to the bipartition of its squares into sets $W$ and $B$, where the squares alternate between $W$ and $B$ in every row and column like a checkerboard. For convenience we always assume that the top left corner is in $W$. When a polyomino is forced to use up a sufficient amount of its boundary layer, then holes in the interior will necessarily be contained in one set of this partition. Recall that $D_1$ refers to the corner-less rectangular boundary layer, and $D_2$ has all spaces of it's boundary filled except for a single corner.

\begin{lemma}[Checkerboard Lemma]\label{checkered}
Let $A$ be an acyclic polyomino with $n$ tiles and $h$ holes, each having an area of one. If $A$ has a rectangular interior, then the outermost layer of the interior alternates between holes and tiles, and the set of holes in $A$ is completely contained in the set $W$ in the checkerboard partition of the interior.
\end{lemma}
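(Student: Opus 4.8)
The plan is to reduce the statement to two purely local constraints and then bootstrap them from the boundary inward using acyclicity. First I would record the two facts that drive everything. Since every hole has area one, an empty space that is a hole must have all four of its edge-neighbors filled (otherwise its complementary component would have area at least two); in particular no two holes are edge-adjacent. Since $A$ is acyclic, its dual graph contains no $2\times 2$ block of filled tiles, because such a block is a dual $4$-cycle; equivalently, no $2\times 2$ block of spaces can be entirely filled. I would also first observe that a rectangular interior forces each of the four sides of the boundary layer to be solidly filled except possibly at the four corners: a missing tile in the middle of a side would expose the interior space directly inside it to the outer perimeter, placing that space in the boundary layer and contradicting that the interior is a full rectangle.

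Next I would establish the alternation on the outermost interior layer. Fix the top side; the boundary-layer row directly above the top interior row is solidly filled. For two horizontally consecutive squares of the top interior row, the $2\times 2$ block they form with the two filled boundary tiles above them already has two filled cells, so by the no-filled-$2\times2$ constraint at least one of the two interior squares is empty; hence no two consecutive top-layer squares are filled. Combined with the no-two-adjacent-holes fact, the top layer must strictly alternate between holes and tiles, and the same argument applies to all four sides. To pin the phase (that the holes land in $W$), I would use that acyclicity forbids the boundary ring from closing into a cycle, so at least one corner of the bounding rectangle is empty; a short case analysis of the corners then fixes the interior corner (for instance, if a bounding corner is filled then the diagonally interior corner, a $W$ square, must be empty to avoid a filled $2\times2$), and since the boundary sides can only connect to one another through the interior, the alternation carries this phase consistently around the layer, so the outer-layer holes are exactly its $W$ squares.

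Finally, to upgrade ``holes $\subseteq W$'' to all holes rather than only the outer layer, the key observation I would prove is that every hole has a corner-adjacent hole of the same color. Indeed, if all four corner-neighbors of a hole were filled, then together with its four (area-one-forced) filled edge-neighbors the eight surrounding cells would form an $8$-cycle in the dual graph, contradicting acyclicity; so at least one corner-neighbor is empty, and a corner-adjacent space shares the hole's color. Since the hole graph is acyclic and now has no isolated vertices, each of its connected components is a nontrivial monochromatic cluster. The hard part will be ruling out a $B$-colored cluster. I would argue by contradiction: among all hypothetical $B$-holes choose one, $x$, at minimal distance to the boundary layer, so that every $B$-square nearer the boundary is filled. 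The four $W$-neighbors of $x$ are filled, and I would show that two of them reach the boundary path through these nearer filled squares along two internally disjoint routes; together with a segment of the near-cyclic boundary ring this closes a dual cycle, contradicting that the dual graph is a tree. Reconciling the almost-closed boundary ring with the acyclicity of the dual graph in this way — showing that a misplaced hole forces a loop among the tiles — is the main obstacle, whereas the local alternation and the monochromatic-cluster reduction are comparatively routine.
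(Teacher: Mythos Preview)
Your treatment of the outermost interior layer is essentially the paper's: the $D_1$ boundary together with the no-$2\times 2$-filled-block and no-adjacent-holes constraints force strict alternation, and the phase is pinned once you know some interior corner is a hole. (The paper argues this last point directly: if all four interior corners were filled they would connect the four $D_1$ arcs into a dual cycle. Your version via the bounding corners is slightly roundabout and needs more care in the case where all four bounding corners happen to be empty, but that is easily patched.)

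The real gap is in your final step. Your plan is to pick a $B$-hole $x$ of minimal distance to the boundary and then build two internally disjoint tile-paths from its filled neighbors out to the boundary ring, closing the cycle along the ring. Two concrete obstacles stand in the way. First, minimality of $x$ only tells you that the \emph{$B$-cells} closer to the boundary are filled; the interleaving $W$-cells along any putative path could still be holes, so there is no obvious connected corridor of tiles leading outward. Second, the boundary ring is not a cycle---at least one bounding corner is missing by your own argument---so even if you produced two disjoint paths, they might land on different arcs with no boundary segment joining them. You flag this as ``the main obstacle'' but give no mechanism for overcoming it.

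The paper sidesteps both difficulties with a purely local argument that never touches the boundary. Instead of one extremal $B$-hole, it takes a \emph{maximal connected} set $B'$ of $B$-holes (a component in the hole graph). Every edge-neighbor of $B'$ is filled by the area-one hypothesis, and every corner-neighbor of $B'$ lies in the interior (since $B'$ sits strictly inside the alternating outer layer) and is filled by maximality of $B'$. Thus the entire $8$-neighborhood of the cluster consists of tiles, and one shows by induction on the tree structure of $B'$ that these surrounding tiles form a single dual cycle: the base case is the $8$-cycle around a lone hole, and attaching a leaf to the tree simply indents the existing cycle without breaking it. Your nice observation that every hole has a same-colored corner-adjacent hole is precisely this base case; following it to its natural conclusion---passing to the whole cluster rather than to one extremal member---yields the paper's argument directly.
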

\begin{proof}
 In general any empty spaces in the interior of a polyomino are contained in the bounded components of its complement in the plane, and are thus part of the holes. So two adjacent empty spaces anywhere in the interior contribute to a hole of area at least two. Then if $A$ has rectangular interior, its boundary contains $D_1$, so in the outermost layer of the interior any two adjacent filled spaces form a dual cycle with their two adjacent boundary tiles. These are both contradictions to our assumptions, and hence the spaces in the outermost layer of the interior must alternate.
 
 Furthermore, at least one of the the corners of this layer must be a hole, since filling in all four would connect the boundary sections of $D_1$ into a cycle (see Figure \ref{fig:dmincheck}). Then by rotations and reflections, we may assume the top left corner is empty and therefore in the outermost layer of the interior all spaces in $W$ are holes and all spaces in $B$ are tiles. 
 
 \begin{figure}[H]
    \centering
    \begin{subfigure}[t]{.4\textwidth}
        \centering
        \begin{tikzpicture}
            \gardenborder{7}{8}

            \fill[newcol!80] (\ra,\ra) --++(0:\ra) --++(90:\ra) --++(180:\ra) -- cycle;
            \fill[newcol!80] (\ra,5*\ra) --++(0:\ra) --++(90:\ra) --++(180:\ra) -- cycle;
            \fill[newcol!80] (6*\ra,\ra) --++(0:\ra) --++(90:\ra) --++(180:\ra) -- cycle;
            \fill[newcol!80] (6*\ra,5*\ra) --++(0:\ra) --++(90:\ra) --++(180:\ra) -- cycle;
            \gardenborderLines{7}{8}
            \draw[white] (\ra,\ra) --++(0:\ra) --++(90:\ra) --++(180:\ra) --++(270:\ra);
            \draw[white] (\ra,5*\ra) --++(0:\ra) --++(90:\ra) --++(180:\ra) --++(270:\ra);
            \draw[white] (6*\ra,\ra) --++(0:\ra) --++(90:\ra) --++(180:\ra) --++(270:\ra);
            \draw[white] (6*\ra,5*\ra) --++(0:\ra) --++(90:\ra) --++(180:\ra) --++(270:\ra);


        \end{tikzpicture}
    \end{subfigure}
\hspace{.3cm}
    \begin{subfigure}[t]{.4\textwidth}
    \centering
        \begin{tikzpicture}
            \gardenborder{7}{8}
            \foreach \i in {0,1}{
                \foreach \j in {1,2,3}{
                    \fill[newcol!80] (2*\j*\ra,\ra+4*\i*\ra) --++(0:\ra) --++(90:\ra) --++(180:\ra) -- cycle;
                }
                \fill[newcol!80] (\ra,2*\i*\ra+2*\ra) --++(0:\ra) --++(90:\ra) --++(180:\ra) -- cycle;
            }
            \fill[newcol!80] (6*\ra,3*\ra) --++(0:\ra) --++(90:\ra) --++(180:\ra) -- cycle;

            \gardenborderLines{7}{8}
            \foreach \i in {0,1}{
                \foreach \j in {1,2,3}{
                    \draw[white] (2*\j*\ra,\ra+4*\i*\ra) --++(0:\ra) --++(90:\ra) --++(180:\ra) --++ (0,-\ra);
                }
                \draw[white] (\ra,2*\i*\ra+2*\ra) --++(0:\ra) --++(90:\ra) --++(180:\ra) --++ (0,-\ra);
            }
            \draw[white] (6*\ra,3*\ra) --++(0:\ra) --++(90:\ra) --++(180:\ra) --++ (0,-\ra);

        \end{tikzpicture}
    \end{subfigure}
    \caption{Left: A dual cycle is created if all four interior corners are filled. Right: An alternating interior layer, with holes in $W$ and green tiles in $B$.}
    \label{fig:dmincheck}
\end{figure}
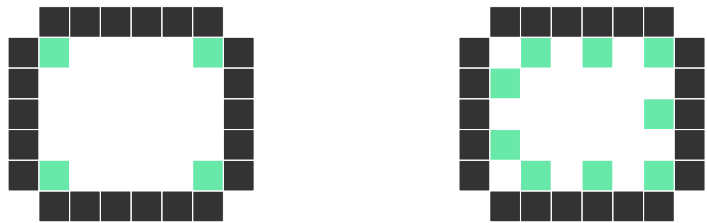
 
 Now suppose $A$ has holes contained somewhere in $B$, and let $B'\subseteq B$ be a maximal connected set of holes in $A$. All spaces in $W$ which are adjacent to $B'$ must be filled, since every hole must have an area of one. And since $B'$ is maximal with respect to being connected, all spaces in $B$ which are corner adjacent to $B'$ must either be filled, or not be contained in the interior of $A$. But spaces in $B'$ are separated from the boundary by the alternating layer where all spaces of $B$ are filled, so all such spaces in $B$ which are corner adjacent to $B'$ are in the interior of $A$ and must be filled.
 
 The hole graph of a polyomino is acyclic, as noted following Definition \ref{D:holegraph}. Therefore the portion of the hole graph corresponding to $B'$ is a tree. We claim that for any acyclic set of holes, each having an area of one, which has all of its corner adjacent spaces filled with tiles, the tiles surrounding the holes form a cycle. This is clearly the case for a single hole, and any tree can be constructed by fixing a root and adding vertices of degree one, one at a time. 
 
 So assume that the statement holds for every such acyclic set of at most $k$ holes, each with an area of one. A new hole cannot be attached to more than one existing vertex, as this would create a cycle. Thus any new hole requires filling in the seven surrounding tiles which are not the corner adjacent hole it is being attached to, some of which are already filled in $A$. This adds an indent to the cycle where the new hole was attached, but it remains a cycle, and by induction if $A$ has an acyclic set of holes, each with an area of one, with all of its corner adjacent tiles filled, then $A$ has a dual cycle.
 
\begin{figure}[H]
    \centering
    \begin{subfigure}[t]{.25\textwidth}
    \centering
        \begin{tikzpicture}
            \foreach \i in {1,2}{
                \fill[black!80] (\i*\ra-\ra,0) --++(0:\ra) --++(90:\ra) --++(180:\ra) -- cycle;
                \fill[black!80] (0,\i*\ra) --++(0:\ra) --++(90:\ra) --++(180:\ra) -- cycle;
                \fill[black!80] (\i*\ra,2*\ra) --++(0:\ra) --++(90:\ra) --++(180:\ra) -- cycle;
                \fill[black!80] (2*\ra,2*\ra-\i*\ra) --++(0:\ra) --++(90:\ra) --++(180:\ra) -- cycle;
            }

            \foreach \i in {1,2}{
                \draw[white] (\i*\ra-\ra,0) --++(0:\ra) --++(90:\ra) --++(180:\ra) --++(270:\ra);
                \draw[white] (0,\i*\ra) --++(0:\ra) --++(90:\ra) --++(180:\ra) --++(270:\ra);
                \draw[white] (\i*\ra,2*\ra) --++(0:\ra) --++(90:\ra) --++(180:\ra) --++(270:\ra);
                \draw[white] (2*\ra,2*\ra-\i*\ra) --++(0:\ra) --++(90:\ra) --++(180:\ra) --++(270:\ra);
            }
\draw[very thick, newcol, rounded corners=1] (.75*\ra,0.5*\ra) --++ (-.25*\ra,0) --++ (0,2*\ra) --++ (2*\ra,0) --++ (0,-2*\ra) --++(-2*\ra,0);
            
        \end{tikzpicture}
    \end{subfigure}
    \begin{subfigure}[t]{.3\textwidth}
        \centering
        \begin{tikzpicture}
            \foreach \i in {2,3,4}{
                \foreach \j in {0,1,3,4}{
                    \fill[black!80] (\i*\ra+\j*\ra,6*\ra-\i*\ra) --++(0:\ra) --++(90:\ra) --++(180:\ra) -- cycle;
                }
            }
            \foreach \j in {1,3,4}{
                    \fill[black!80] (\ra+\j*\ra,5*\ra) --++(0:\ra) --++(90:\ra) --++(180:\ra) -- cycle;
            }
            \foreach \j in {0,1,3}{
                    \fill[black!80] (5*\ra+\j*\ra,\ra) --++(0:\ra) --++(90:\ra) --++(180:\ra) -- cycle;
            }

            \foreach \i in {2,3,4}{
                    \fill[black!80] (\i*\ra,6*\ra) --++(0:\ra) --++(90:\ra) --++(180:\ra) -- cycle;
                    \fill[black!80] (\i*\ra+4*\ra,0) --++(0:\ra) --++(90:\ra) --++(180:\ra) -- cycle;
            }

           \foreach \i in {2,3,4}{
                \foreach \j in {0,1,3,4}{
                    \draw[white] (\i*\ra+\j*\ra,6*\ra-\i*\ra) --++(0:\ra) --++(90:\ra) --++(180:\ra) --++(270:\ra);
                }
            }
            \foreach \j in {1,3,4}{
                    \draw[white] (\ra+\j*\ra,5*\ra) --++(0:\ra) --++(90:\ra) --++(180:\ra) --++(270:\ra);
            }
            \foreach \j in {0,1,3}{
                    \draw[white] (5*\ra+\j*\ra,\ra) --++(0:\ra) --++(90:\ra) --++(180:\ra) --++(270:\ra);
            }

            \foreach \i in {2,3,4}{
                    \draw[white] (\i*\ra,6*\ra) --++(0:\ra) --++(90:\ra) --++(180:\ra) --++(270:\ra);
                    \draw[white] (\i*\ra+4*\ra,0) --++(0:\ra) --++(90:\ra) --++(180:\ra) --++(270:\ra);
            }

\draw[very thick, newcol, rounded corners=1] (2.5*\ra,6.2*\ra) --++ (0,0.3*\ra) --++ (2*\ra,0) 
--++ (0,-\ra) --++(\ra,0) 
--++ (0,-\ra) --++(\ra,0) 
--++ (0,-\ra) --++(\ra,0) 
--++ (0,-\ra) --++(\ra,0)
--++ (0,-2*\ra) --++(-2*\ra,0)
--++ (0,\ra) --++(-\ra,0)
--++ (0,\ra) --++(-\ra,0)
--++ (0,\ra) --++(-\ra,0)
--++ (0,\ra) --++(-\ra,0)
--++ (0,2*\ra);

        \end{tikzpicture}
    \end{subfigure}
    \begin{subfigure}[t]{.3\textwidth}
    \centering
        \begin{tikzpicture}
            \foreach \i in {2,4}{
                \foreach \j in {0,1,3,4}{
                    \fill[black!80] (\i*\ra+\j*\ra,6*\ra-\i*\ra) --++(0:\ra) --++(90:\ra) --++(180:\ra) -- cycle;
                }
            }
            \foreach \j in {0,1,3}{
                    \fill[black!80] (3*\ra+\j*\ra,3*\ra) --++(0:\ra) --++(90:\ra) --++(180:\ra) -- cycle;
            }
            \foreach \j in {1,3,4}{
                    \fill[black!80] (\ra+\j*\ra,5*\ra) --++(0:\ra) --++(90:\ra) --++(180:\ra) -- cycle;
            }
            \foreach \j in {0,1,3}{
                    \fill[black!80] (5*\ra+\j*\ra,\ra) --++(0:\ra) --++(90:\ra) --++(180:\ra) -- cycle;
            }

            \foreach \i in {2,3,4}{
                    \fill[black!80] (\i*\ra,6*\ra) --++(0:\ra) --++(90:\ra) --++(180:\ra) -- cycle;
                    \fill[black!80] (\i*\ra+4*\ra,0) --++(0:\ra) --++(90:\ra) --++(180:\ra) -- cycle;
            }
            \fill[black!80] (7*\ra,4*\ra) --++(0:\ra) --++(90:\ra) --++(180:\ra) -- cycle;
            \fill[black!80] (8*\ra,4*\ra) --++(0:\ra) --++(90:\ra) --++(180:\ra) -- cycle;
            \fill[black!80] (8*\ra,3*\ra) --++(0:\ra) --++(90:\ra) --++(180:\ra) -- cycle;

            \foreach \i in {2,4}{
                \foreach \j in {0,1,3,4}{
                    \draw[white] (\i*\ra+\j*\ra,6*\ra-\i*\ra) --++(0:\ra) --++(90:\ra) --++(180:\ra) --++(270:\ra);
                }
            }
            \foreach \j in {0,1,3}{
                    \draw[white] (3*\ra+\j*\ra,3*\ra) --++(0:\ra) --++(90:\ra) --++(180:\ra) --++(270:\ra);
            }
            \foreach \j in {1,3,4}{
                    \draw[white] (\ra+\j*\ra,5*\ra) --++(0:\ra) --++(90:\ra) --++(180:\ra) --++(270:\ra);
            }
            \foreach \j in {0,1,3}{
                    \draw[white] (5*\ra+\j*\ra,\ra) --++(0:\ra) --++(90:\ra) --++(180:\ra) --++(270:\ra);
            }

            \foreach \i in {2,3,4}{
                    \draw[white] (\i*\ra,6*\ra) --++(0:\ra) --++(90:\ra) --++(180:\ra) --++(270:\ra);
                    \draw[white] (\i*\ra+4*\ra,0) --++(0:\ra) --++(90:\ra) --++(180:\ra) --++(270:\ra);
            }

            \draw[white] (7*\ra,4*\ra) --++(0:\ra) --++(90:\ra) --++(180:\ra) --++(270:\ra);
            \draw[white] (8*\ra,4*\ra) --++(0:\ra) --++(90:\ra) --++(180:\ra) --++(270:\ra);
            \draw[white] (8*\ra,3*\ra) --++(0:\ra) --++(90:\ra) --++(180:\ra) --++(270:\ra);

\draw[very thick, newcol, rounded corners=1] (2.5*\ra,6.2*\ra) --++ (0,0.3*\ra) --++ (2*\ra,0) 
--++ (0,-\ra) --++(\ra,0) 
--++ (0,-\ra) --++(3*\ra,0) 
--++ (0,-4*\ra) --++(-2*\ra,0)
--++ (0,\ra) --++(-\ra,0)
--++ (0,\ra) --++(-\ra,0)
--++ (0,\ra) --++(-\ra,0)
--++ (0,\ra) --++(-\ra,0)
--++ (0,2*\ra);
        \end{tikzpicture}
    \end{subfigure}
    \caption{Dual cycle around one hole, around a tree, and then extended around a single extra hole.}
    \label{fig:inductcyc}
\end{figure}
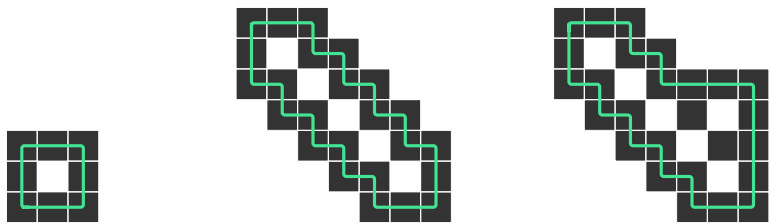
 
 This is in contradiction with our assumption of $A$ being acyclic. So all holes of $A$ must be contained in $W$ and all spaces of $B$ must be filled.
\end{proof}

Recall that it was pointed out in the proof of Theorem \ref{T:efficient} that the total area of an efficiently structured polyomino cannot be equal to a square or pronic rectangle, since minimal outer perimeter and area more than $\alpha -1$ requires it to be constructed in the square or pronic rectangle of area $\alpha$, and this would require the entire boundary to be filled, creating a dual cycle. 

Furthermore, suppose the total area of an efficiently structured polyomino is $\alpha-1$. Then it must again be constructed in the square or pronic rectangle of area $\alpha$, since a smaller rectangle with minimal perimeter has total area at most $\alpha-1$ by Remark \ref{area}, and by the same argument a dual cycle would be created. And then it has exactly one empty boundary space, which must be a corner to preserve minimal perimeter, and therefore it has boundary $D_2$.

Combining this observation regarding the total area with the \hyperref[checkered]{\hyperref[checkered]{Checkerboard Lemma}} is already sufficiently restrictive to rule out $m(t_{\alpha})$ in most cases.

\begin{lemma}
\label{nosqs}
 If $m(t_{\alpha})+t_{\alpha}=\alpha$, then $g(t_{\alpha}) > m(t_{\alpha})$ and $h_{\alpha}\le t_{\alpha}-1$.
\end{lemma}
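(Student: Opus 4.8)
The plan is to derive both conclusions from the already-established fact (recalled just before the lemma) that no efficiently structured polyomino can have total area equal to a square or pronic number, together with a simple area count. No new structural machinery is required.

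For the first conclusion, $g(t_\alpha) > m(t_\alpha)$, I would argue by contradiction. Suppose $g(t_\alpha) = m(t_\alpha)$; then there is a crystallized polyomino $A$ with $m(t_\alpha)$ tiles and $t_\alpha$ holes. Since Lemma \ref{L:propertiesm} gives $M(m(t_\alpha), t_\alpha) = t_\alpha$, Lemma \ref{Meff} forces $A$ to be efficiently structured, so in particular every hole has area one and the total area of $A$ is $m(t_\alpha) + t_\alpha = \alpha$. This is exactly the forbidden situation: an efficiently structured polyomino with total area $\alpha$ attains minimal outer perimeter $p_{min}(\alpha)$ and, by the uniqueness of minimum-perimeter shapes (Remark \ref{area} and \cite{kurz2008counting}), must be the square or pronic rectangle with its entire boundary layer filled, which produces a dual cycle and contradicts acyclicity. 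Hence $g(t_\alpha) > m(t_\alpha)$.

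For the second conclusion, $h_\alpha \le t_\alpha - 1$, I would again argue by contradiction, supposing some polyomino $A$ contained in the square or pronic rectangle $Q$ of area $\alpha$ has $h(A) = t_\alpha$ holes. Each hole of $A$ is a bounded component of the complement and therefore occupies cells strictly interior to $Q$; consequently the tiles of $A$ and the (at least $t_\alpha$) cells filling its holes are disjoint subsets of $Q$, giving $|A| + t_\alpha \le \alpha$. Combining $g(t_\alpha) \le |A|$ with the hypothesis $m(t_\alpha) + t_\alpha = \alpha$ yields $g(t_\alpha) \le \alpha - t_\alpha = m(t_\alpha)$, contradicting the first conclusion. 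Thus no such $A$ exists and $h_\alpha \le t_\alpha - 1$.

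The two parts are closely tied: the second is just the sharpened form of the theoretical bound $h_\alpha \le t_\alpha$ (which used only $g(h) \ge m(h)$), now upgraded by the strict inequality from the first part. I do not anticipate a genuine obstacle; the only points needing care are invoking Lemma \ref{Meff} correctly to promote the hypothetical minimizer to an efficiently structured polyomino, and checking that all holes sit strictly inside $Q$ so that the count $|A| + t_\alpha \le \alpha$ is valid. Notably, neither the \hyperref[checkered]{Checkerboard Lemma} nor any finer structural analysis is needed for this particular case, since the total area landing exactly on $\alpha$ already forces the fatal dual cycle.
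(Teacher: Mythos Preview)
Your proposal is correct and follows essentially the same approach as the paper: the first part is exactly the paper's invocation of the ``preceding observation'' that an efficiently structured polyomino cannot have total area equal to $\alpha$, and the second part is equivalent to the paper's phrasing that a crystallized polyomino with $t_\alpha$ holes has total area at least $\alpha+1$ and hence cannot fit in $Q$. One small remark: since $h_\alpha\le t_\alpha$ always holds, your contradiction hypothesis $h(A)=t_\alpha$ is automatically satisfied by a polyomino realizing $h_\alpha$, so the area count $|A|+t_\alpha\le\alpha$ goes through exactly as you say.
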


\begin{proof}
That $g(t_\alpha) > m(t_\alpha)$ follows directly from the preceding observation regarding total area. The only difference is that here we acknowledge that the only time $m(h)+h$ is a square or pronic number is if $h=t_{\alpha}$ for $\alpha=N^2$ with $N\equiv 1\text{ mod } 3$, or $\alpha=N(N+1)$ with $N\not\equiv 2\text{ mod } 3$.

Then if $m(t_{\alpha})+t_{\alpha}=\alpha$, any crystallized polyomino with $t_{\alpha}$ holes has total area at least $\alpha+1$. This cannot fit in a square or pronic rectangle of area $\alpha$, and therefore $h_{\alpha}\le t_{\alpha}-1$.


\end{proof}

\begin{lemma}\label{nodiv2}
If $2 \mid \alpha$, then $g(t_{\alpha}) > m(t_{\alpha})$ and $h_{\alpha}\le t_{\alpha}-1$.
\end{lemma}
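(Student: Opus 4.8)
The plan is to argue by contradiction and reduce both assertions to a single geometric impossibility about fitting holes into an even rectangle. Suppose $g(t_\alpha)=m(t_\alpha)$. Then there is a crystallized polyomino $A$ with $t_\alpha$ holes and $m(t_\alpha)$ tiles, and since $M(m(t_\alpha),t_\alpha)=t_\alpha$ by Lemma~\ref{L:propertiesm}, Lemma~\ref{Meff} forces $A$ to be efficiently structured: acyclic, all holes of area one, minimal outer perimeter, so its total area is exactly $m(t_\alpha)+t_\alpha$. When this total area equals $\alpha$ the conclusion is already Lemma~\ref{nosqs}, so by equation~(\ref{Toffset}) the genuinely new even cases are total area $\alpha-1$ (an even square $N^2$ with $N\not\equiv 1 \bmod 3$) and total area $\alpha-2$ (a pronic $N(N+1)$ with $N\equiv 2\bmod 3$, and pronic numbers are automatically even).

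In each case I would first pin down the shape of $A$ exactly as in the total-area observations preceding Lemma~\ref{nosqs}: by Remark~\ref{area} the only bounding rectangles of perimeter $p_{\min}(\alpha)$ whose area is at least the total area of $A$ are the square or pronic rectangle of area $\alpha$ and one strictly narrower rectangle of area equal to that total area, and the narrower one is completely filled and so has no holes. Hence $A$ lives in the $\alpha$-rectangle with only its one or two missing cells, which must be corners to keep the perimeter minimal; in particular $A$ has a rectangular interior, so Lemma~\ref{checkered} applies and forces every cell of $B$ in the interior to be filled with all $t_\alpha$ holes confined to $W$. This is where the parity hypothesis enters: because $\alpha$ is even, the relevant interior rectangle has an even side, so $|W|=|B|$, and once the top-left interior corner is normalized to a hole the two \emph{anti-diagonal} interior corners are pinned to $B$ and are therefore tiles.

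I then aim to show that this solidly checkered interior cannot be completed to a connected acyclic polyomino. The two constraints fight each other: avoiding a dual cycle means no $2\times2$ block is fully filled, which (since both $B$-cells of a block are filled) forces the filled $W$-cells to be sparse; but every $B$-tile lying deep in the interior is surrounded on all four sides by $W$-cells, so it can only be kept connected through one of those same scarce filled $W$-cells. Weighing the number of available filled $W$-cells, $|W|-t_\alpha$, against the deep $B$-tiles that must be connected should force either a $B$-tile with four empty neighbors (disconnecting $A$, contradicting that it is a polyomino) or a connection that closes a $2\times2$ block (contradicting acyclicity). This gives $g(t_\alpha)>m(t_\alpha)$. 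For the bound $h_\alpha\le t_\alpha-1$, I would observe that $h_\alpha\le t_\alpha$ holds by the definition of $t_\alpha$, and that any polyomino realizing $t_\alpha$ holes inside the $\alpha$-rectangle is forced (combining $g(t_\alpha)>m(t_\alpha)$ with the area bound) to fill the rectangle entirely, carrying a single dual cycle by Corollary~\ref{C:onebreak}; a one-cycle variant of the same checkerboard analysis then rules this configuration out as well, so $h_\alpha\le t_\alpha-1$.

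The hard part will be the realizability step uniformly in $N$. For the smallest case in each family ($N=6$ for squares, $N=5$ for pronics) one has $t_\alpha=|W|$, every interior $W$-cell is a hole, and the deep $B$-tiles are literally isolated, giving an immediate contradiction; but for larger $N$ there are surplus filled $W$-cells, and a crude incidence count between them and the deep $B$-tiles is not by itself decisive (the inequality only becomes tight at the threshold $N=8$). Making the cycle-forcing precise across all even $N$ — tracking how any attempt to thread the scattered $B$-tiles through filled $W$-cells unavoidably closes a $2\times2$ block, and adapting the argument to tolerate the single extra cycle permitted by Corollary~\ref{C:onebreak} — is the crux, and it is exactly here that the parity $2\mid\alpha$, which balances the checkerboard and fixes the colors of the interior corners, does the essential work that is unavailable when $\alpha$ is odd.
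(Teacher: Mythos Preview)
Your setup is right up to and including the Checkerboard Lemma, and you even record the decisive fact: with an even side in the interior, the two anti-diagonal interior corners lie in $B$ and are therefore filled. But then you walk past the immediate contradiction. A filled interior corner is edge-adjacent to the two boundary tiles flanking the nearby boundary corner; if that boundary corner is also filled, those four tiles form a $2\times2$ block, i.e.\ a dual $4$-cycle. In the even-square case the boundary is $D_2$, so three of the four boundary corners are filled while two (anti-diagonal) interior corners are filled; by pigeonhole one filled interior corner sits next to a filled boundary corner, and you are done. In the pronic case with total area $\alpha-2$, exactly two boundary cells are empty, so to avoid a $4$-cycle both boundary corners adjacent to the two filled interior corners must be the empty ones; but then each filled interior corner bridges the two boundary edges at its missing corner, so the boundary layer is reconnected into a single cycle anyway. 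This is the paper's argument, and it is a two-line consequence of the observation you already made. The global counting of ``deep $B$-tiles versus filled $W$-cells'' that you propose is unnecessary, and as you yourself note it does not close for general $N$.

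For $h_\alpha\le t_\alpha-1$ you also overcomplicate. Once $g(t_\alpha)>m(t_\alpha)$ is in hand, suppose a polyomino inside the $\alpha$-rectangle has $t_\alpha$ holes and hence at least $m(t_\alpha)+1$ tiles. In the even-square case its total area is then the full $\alpha$, so the entire boundary is filled; but a boundary \emph{corner} tile bounds no hole and its removal keeps the polyomino connected, producing $m(t_\alpha)$ tiles with $t_\alpha$ holes and contradicting $g(t_\alpha)>m(t_\alpha)$. No ``one-cycle variant of the checkerboard analysis'' is needed. In the pronic case the total area is $\alpha-1$, the Checkerboard Lemma is not directly available because $M=h+\tfrac12$ allows one failure (Corollary~\ref{C:onebreak}), and one must argue that the outermost interior layer still alternates; the paper handles this by a short parity walk around that layer, again ending in a removable corner tile. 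Your proposal gestures at this step but does not carry it out.
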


\begin{proof}
 Lemma \ref{nosqs} already covers the case here when $m(t_{\alpha})+t_{\alpha}=\alpha$. Otherwise, by the derivations of equations (\ref{tsq}) and (\ref{tpr}), if $\alpha=N^2$ then $m(t_{\alpha})+t_{\alpha}=\alpha-1$, and if $\alpha=N(N+1)$ then $m(t_{\alpha})+t_{\alpha}=\alpha-2$. \\
 
 \textit{Case 1:} Assume that $2 \mid \alpha=N^2$, and suppose that $A$ is a crystallized polyomino with $t_{\alpha}$ holes and $m(t_{\alpha})$ tiles such that $m(t_{\alpha})+t_{\alpha}=\alpha-1$. Then $A$'s bounding rectangle must be an $N\times N$ square by the observation regarding total area, which then also asserts that $A$ must have boundary $D_2$.
 
 By the \hyperref[checkered]{Checkerboard Lemma}, the spaces of $B$ in the checkerboard partition of the interior are all filled. So on each of the two parallel sides of even length, one of the corners of the interior must be filled. But three of the corners of the boundary layer are also filled, so by the pigeonhole principle this creates at least one dual cycle of length four in a corner, which contradicts efficient structure. Hence $g(t_{\alpha}) > m(t_{\alpha})$. 
 
 Furthermore, if $h_{\alpha}=t_{\alpha}$, then there is a polyomino $A$ fitting into the square of area $\alpha$ with $t_{\alpha}$ holes and $m(t_{\alpha})+1$ tiles. Thus $A$ must have total area $\alpha$, and all tiles of the boundary layer must be filled. But none of the corner tiles bound any holes, and taking one away will not disconnect $A$, which would imply that in fact $g(t_{\alpha})\le m(t_{\alpha})$. By the preceding argument this is a contradiction, and hence $h_{\alpha}\le t_{\alpha}-1$.\\
 
 \textit{Case 2:} If instead $\alpha=N(N+1)$ and $m(t_{\alpha})+t_{\alpha}=\alpha-2$, then similarly by Remark \ref{area} any other rectangle with this perimeter has area at most $\alpha-2$, and filling a shape of this area would create a dual cycle in the boundary layer, so a pronic rectangle is required. Two tiles on the boundary must be empty, and to maintain minimum perimeter they must be two corners or a corner and one of its adjacent spaces.  As before, two interior corners must be filled, and then both of their incident corners in the boundary must be empty to avoid dual cycles. But there are precisely two empty spaces which are not holes or tiles, so the rest of the boundary is filled and the indented corners create a dual cycle, which is again a contradiction. Hence $g(t_{\alpha}) > m(t_{\alpha})$.
 
 Similarly to the previous case, if $h_{\alpha}=t_{\alpha}$, then a polyomino $A$ in the pronic rectangle of area $\alpha$ which has $t_{\alpha}$ holes and $m(t_{\alpha})+1$ tiles has total area $\alpha-1$, and exactly one empty boundary space. Then if the outermost layer of the interior alternates between tiles and holes, there must again be two interior corners which are filled. At least one of those boundaries must have the corner and its two adjacent boundary spaces filled, creating a dual cycle of length four. And the corner tile there does not bound any holes and can be removed without disconnecting $A$, which implies that $g(t_{\alpha})\le m(t_{\alpha})$, which is a contradiction. Therefore it suffices to show that the outermost layer of the interior alternates.
 
 However, the \hyperref[checkered]{Checkerboard Lemma} no longer applies, since $M(n,h)$ will be $h+1/2$, which forces exactly one of the three conditions of efficient structure to fail by Corollary \ref{C:onebreak}. Since we are restricting the shape to be contained in a pronic rectangle, the only way to increase the perimeter while filling a total area of $\alpha-1$ is if the empty space of the boundary is not in a corner. But then $A$ would be acyclic and only have holes of area one, so the \hyperref[checkered]{Checkerboard Lemma} would assure that the outermost layer of the interior alternates. Otherwise the empty boundary space is in a corner and there is either a single dual cycle or the existence of a hole with area exactly two.
 
 Suppose that there are two adjacent tiles in the outermost layer of the interior, creating a dual cycle of length four with the adjacent tiles of the boundary. Since there cannot be any further cycles or holes of area two, the rest of this layer must alternate, and it has $2(N-2)+2(N-3)$ spaces, which is even. Then surrounding these two adjacent tiles must be two holes, one on either side. And then two tiles, one on the other side of each hole, and then two holes, and so on. Since there are an even number of spaces this will end with either a hole of area two or two tiles placed next to each other creating another cycle, which is a contradiction. 
 
 The same is true if we start with a hole of area two, and therefore the spaces in this layer alternate between holes and tiles, which implies by the above that $g(t_{\alpha})\le m(t_{\alpha})$, which is a contradiction. Hence $h_{\alpha}\le t_{\alpha}-1$.

\end{proof}

Lemmas \ref{nosqs} and \ref{nodiv2} prove that $g(t_{\alpha})\ge m(t_{\alpha})+1$ and $h_{\alpha}\le t_{\alpha}-1$ for all odd squares with side length $N\equiv 1 \text{ mod } 3$, all even squares, and all pronic rectangles. This leaves only odd squares which have side lengths $N\equiv 0$ or 2 mod 3.

\section{Expansion and Compression}\label{S:Expansion}

To determine what happens for the remaining odd squares, we develop a technique for lifting efficient arrangements from smaller crystallized polyominoes, and refer to this process as \emph{expansion}. We refer to a set of five tiles arranged to have a central tile with one tile adjacent at each edge as a ``plus,'' and a set of five holes arranged to have a central hole with one hole adjacent at each corner as an ``X'' (see Figure \ref{fig:xandt}). 

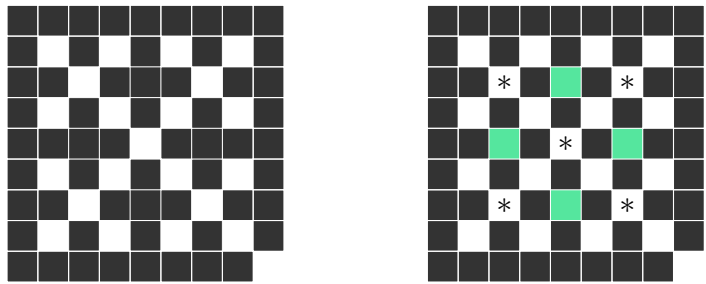
\begin{figure}[H]
    \centering
    \begin{subfigure}[t]{.4\textwidth}
        \centering
        \begin{tikzpicture}
            \fullborder{9}{9}
            \wtopcheck{9}{9}
            \foreach \i in {0,1}{
                \fill[black!80] (4*\ra,4*\i*\ra+2*\ra) --++(0:\ra) --++(90:\ra) --++(180:\ra) -- cycle;
                \fill[black!80] (4*\i*\ra+2*\ra,4*\ra) --++(0:\ra) --++(90:\ra) --++(180:\ra) -- cycle;
            }
            \fullborderLines{9}{9}
            \wtopcheckLines{9}{9}
        \end{tikzpicture}
    \end{subfigure}
    \begin{subfigure}[t]{.4\textwidth}
        \centering
        \begin{tikzpicture}
            \fullborder{9}{9}
            \wtopcheck{9}{9}
            \foreach \i in {0,1}{
                \fill[newcol!90] (4*\ra,4*\i*\ra+2*\ra) --++(0:\ra) --++(90:\ra) --++(180:\ra) -- cycle;
                \fill[newcol!90] (4*\i*\ra+2*\ra,4*\ra) --++(0:\ra) --++(90:\ra) --++(180:\ra) -- cycle;
                \node at (2.5*\ra,2.5*\ra+4*\i*\ra) {$\ast$};
                \node at (6.5*\ra,2.5*\ra+4*\i*\ra) {$\ast$};
            }
            \node at (4.5*\ra,4.5*\ra) {$\ast$};
            \fullborderLines{9}{9}
            \wtopcheckLines{9}{9}
        \end{tikzpicture}
    \end{subfigure}
    
        \caption{A polyomino with four pluses and five X's. On the right the central tile of each plus is marked in green, and the central hole of each X with an $\ast$.}
        \label{fig:xandt}
\end{figure}

For an odd integer $N$, let $P_N$ be the sub-arrangement of tiles and holes in the interior of the $N\times N$ rectangle in which: (a) all spaces in $B$ of the checkerboard partition are filled, and (b) all spaces in the odd rows of $W$, enumerated from the top down, are empty. The remaining spaces are considered to be undetermined.

\begin{figure}[H]
\centering

	\begin{subfigure}[t]{.4\textwidth}
		\centering
		\begin{tikzpicture}

            \fill[black!25] (0,0) -- (0,9*\ra) -- (9*\ra,9*\ra) -- (9*\ra,0) -- cycle 
                (\ra,\ra) -- (8*\ra,\ra) -- (8*\ra,8*\ra) -- (\ra,8*\ra) -- cycle;
			\wtopcheck{9}{9}
			\hardholesTwo{9}{9}
			\wtopcheckLines{9}{9}
            
		\end{tikzpicture}
	\end{subfigure}
    \caption{$P_{9}$ in the interior of the $9\times 9$ square, with tiles in black, holes in white, and undetermined spaces in green.}
    \label{pnex}
\end{figure}
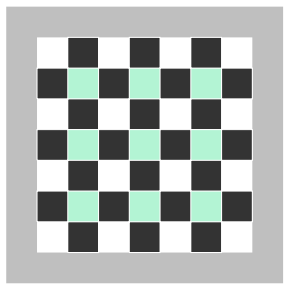

$P_{N}$ is not itself a polyomino since its set of tiles is not connected, but it will play an integral role in the expansion process. Observe that the undetermined spaces of $P_N$ form a square grid of dimensions $(N-3)/2\times(N-3)/2$. If any undetermined space is filled with a tile it will create a plus, while any hole in these spaces will create an X. We let $U_N$ denote the set of spaces left undetermined by $P_N$. The process of expansion will be to determine these spaces by using the interior of an $(N+1)/2\times(N+1)/2$ square as a template.

\begin{definition}
Let $A$ be an arrangement of tiles and holes in the $N\times N$ square with boundary $D$ for some $D_1\subseteq D \subseteq D_2$.  The \emph{expansion} of $A$ is the arrangement $E(A)$ in the $(2N-1)\times (2N-1)$ square with boundary $D$, subarrangement $P_{2N-1}$, and in which the spaces of $U_{2N-1}$ are equivalent to the corresponding spaces in the interior of $A$. 
\end{definition}

\begin{definition}
Conversely, an arrangement $A$ of tiles and holes in the $N\times N$ square which can be written as $A=D\cup P_N\cup U_A$ for some $D_1\subseteq D \subseteq D_2$ and a specified subarrangement $U_A$ in the spaces of $U_N$ is called \emph{compressible}, and the \emph{compression} of $A$ is $C(A)$, the arrangement in the $(N+1)/2\times (N+1)/2$ square with boundary $D$ and interior equivalent to $U_A$.
\end{definition}

These processes are inverses, with $E(C(A))=A$ for any compressible $A$. They also preserve the properties of connectivity in both the hole graph and the dual graph.

\begin{lemma}\label{excomp:adjacencies}
Let $A$ be a compressible arrangement. Then a pair of adjacent tiles in $C(A)$ expands to a path of length two in $A$, and a pair of corner adjacent empty spaces in $C(A)$ expands to a path of length two in the hole graph of $A$. 
\end{lemma}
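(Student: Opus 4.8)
The plan is to put explicit coordinates on the interior of the $N\times N$ square containing the compressible arrangement $A$, and then to track exactly how the correspondence between the interior of $C(A)$ and the undetermined spaces $U_N$ distorts adjacencies. First I would index interior positions of $A$ by $(r,c)$ with $1\le r,c\le N-2$, placing the top-left interior space in $W$, so that a space lies in $W$ iff $r+c$ is even and in $B$ iff $r+c$ is odd. Under this convention the structure of $P_N$ reads off cleanly: every $B$ space (with $r+c$ odd) is filled, every $W$ space in an odd row (both $r$ and $c$ odd) is an empty hole, and the undetermined spaces of $U_N$ are exactly those with both $r$ and $c$ even. These occupy a $(N-3)/2\times(N-3)/2$ grid, matching the dimensions of the interior of $C(A)$, and the compression correspondence sends the interior position $(a,b)$ of $C(A)$ to the undetermined space at position $(2a,2b)$ of $A$.

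With this dictionary in hand, each claim reduces to a parity check on the single intervening space. For the dual-graph claim, two edge-adjacent tiles of $C(A)$ sit at interior positions differing by $(0,\pm1)$ or $(\pm1,0)$, hence map to undetermined spaces of $A$ differing by $(0,\pm2)$ or $(\pm2,0)$; the space strictly between them has exactly one odd coordinate, so its coordinate sum is odd and it is a filled $B$-tile of $P_N$. Filling the two undetermined spaces with tiles then produces three collinear filled tiles in $A$, i.e. a path of length two in the dual graph joining the images of the original adjacent pair. For the hole-graph claim, two corner-adjacent empty spaces of $C(A)$ differ by $(\pm1,\pm1)$ and map to undetermined spaces of $A$ differing by $(\pm2,\pm2)$; the space between them, at the position differing by $(\pm1,\pm1)$, has both coordinates odd, so it is an odd-row $W$ space and therefore an empty hole of $P_N$. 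Since holes differing by $(\pm1,\pm1)$ share a polyomino vertex, this intervening hole is corner-adjacent to each endpoint, yielding a path of length two in the hole graph.

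I would close by checking two small points that keep the paths honest. The endpoints are never directly adjacent in $A$: spaces differing by $(0,\pm2)$ or $(\pm2,0)$ share no edge, and spaces differing by $(\pm2,\pm2)$ share no corner, so the connection genuinely passes through the intervening space and the path has length exactly two. Moreover, because the undetermined coordinates range over even values in $[2,N-3]$, the intervening spaces have coordinates in $[2,N-3]$ and so lie strictly in the interior of $A$, away from the boundary $D$; their type is therefore dictated entirely by $P_N$ and is insensitive to the undetermined content $U_A$.

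The main obstacle here is bookkeeping rather than anything conceptual: the whole argument rests on fixing the coordinate and checkerboard conventions so that the parity of the intervening space falls out correctly, a filled $B$-tile in the first case and an empty odd-row hole in the second, and then confirming that this intervening space is forced by $P_N$ independently of $U_A$. Once the indexing is pinned down, each half of the lemma is a single parity observation, and together they give precisely the length-two paths needed to transport connectivity between $C(A)$ and $A$.
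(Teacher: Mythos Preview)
Your coordinate-based argument is essentially a rigorous version of the paper's own proof, which argues more informally that adjacent interior tiles of $C(A)$ expand to overlapping ``pluses'' sharing a $P_N$ tile, and corner-adjacent empty spaces expand to overlapping ``X's'' sharing a $P_N$ hole. The parity bookkeeping you lay out is correct and makes this precise.

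There is one omission worth fixing: you only treat the case where \emph{both} tiles of the adjacent pair in $C(A)$ lie in the interior. The lemma as stated, and as used in the proof of the Compression Lemma immediately afterward, also covers the case where an interior tile of $C(A)$ is edge-adjacent to a tile in the boundary layer $D$. The paper's proof handles this explicitly: in your coordinates, such an interior tile sits at $(2a,2b)$ with one of $a,b$ equal to $1$ or $(N-3)/2$, and the intervening space (for instance $(1,2b)$ when $a=1$) has one odd and one even coordinate, hence is a filled $B$-tile of $P_N$, which in turn is edge-adjacent to a filled non-corner boundary space of $A$. Your parity argument extends verbatim, but the endpoint on the boundary side now lives in $D$ rather than in $U_N$, and you should say so. A parallel remark applies to the hole-graph claim when an empty interior corner of $C(A)$ is corner-adjacent to an empty boundary corner of $D$: the intervening space at $(1,1)$ (or a symmetric corner) has both coordinates odd and is therefore a $P_N$ hole, giving the required length-two path in the hole graph out to the boundary corner.
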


\begin{proof}
 This comes directly from the structure of $P_N\subset A$. Two adjacent interior tiles of $C(A)$ expand to two pluses which intersect in a connecting tile from $P_{N}$, and a tile adjacent to the boundary layer of $C(A)$ corresponds to a tile of $U_A$ which is connected to the boundary layer by a tile of $P_{N}$ (see Figure \ref{fig:comp1}). 
 
 Moreover, for any filled space of $U_A$ the four tiles of $P_{N}$ surrounding it can be thought to extend the four edges of the corresponding tile from $C(A)$. So when adjacent spaces of $C(A)$ are filled, the intermediate tile of $P_{N}$ is a subdivision of that edge in the expansion. The same holds for empty spaces, where two corner adjacent empty spaces of $C(A)$ create two X's in $A$ which intersect in an empty space of $P_{N}$ (see Figure \ref{fig:discpath}).
\end{proof}

While we do not require that an arrangement $A$ is a polyomino in the definitions of expansion and compression, Lemma \ref{excomp:adjacencies} allows us to determine when the property of being an acyclic polyomino is preserved by these processes.

\begin{lemma}[Compression Lemma]\label{compressionlem}
A compressible arrangement $A$ is an acyclic polyomino if and only if $C(A)$ is an acyclic polyomino with each of its holes having an area of one.
\end{lemma}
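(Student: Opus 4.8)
The plan is to read off both the dual graph and the connectivity of $A$ from those of $C(A)$ through the local dictionary provided by Lemma~\ref{excomp:adjacencies}, and to pin down exactly where the hypothesis ``each hole of $C(A)$ has area one'' is forced. Writing $A = D\cup P_N\cup U_A$, I first record the structural features of $A$ that make the correspondence rigid. Every empty cell of $A$ lies in $W$ (the odd rows of $W$ are empty by definition of $P_N$, and an empty undetermined cell is an X-center), and no two cells of $W$ are edge-adjacent; hence \emph{every hole of $A$ automatically has area one}. Moreover no $2\times 2$ block of $A$ is completely filled, since any such block contains an empty odd-row cell of $W$, so $A$ has no dual $4$-cycle at all and the filled $B$-tiles of $P_N$ create no cycles by themselves. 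Consequently every cycle of the dual graph $G_A$ must pass through plus-centers, that is, through filled cells of $U_A$.

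Next I classify the $B$-tiles of $P_N$, which form the scaffolding that Lemma~\ref{excomp:adjacencies} turns into subdivisions. Since $B$-tiles are pairwise non-adjacent, each one meets only $W$-cells and boundary tiles; its two candidate neighbours are either two interior even-even cells of $U_N$ (a connector) or one such cell together with a boundary tile near the border. A $B$-tile is a degree-two subdivision vertex if both candidates are filled, a pendant leaf if exactly one is filled, and it is \emph{isolated} precisely when both of its interior candidates are empty. The crucial observation linking the hole hypothesis to connectivity is this: two edge-adjacent empty interior cells of $C(A)$ --- a hole of $C(A)$ of area $\ge 2$ --- map under expansion to a connector $B$-tile all four of whose neighbours are empty, hence to an isolated tile of $A$, and conversely an isolated interior tile can arise only in this way.

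With these observations the two implications are short. For the forward direction, assume $A$ is an acyclic polyomino. If $C(A)$ had a hole of area $\ge 2$ the previous paragraph would produce an isolated tile of $A$, contradicting connectivity, so every hole of $C(A)$ has area one; connectivity of $C(A)$ then follows by contracting each connector $B$-tile along a path of $G_A$ (discarding pendant detours) to produce a walk in $G_{C(A)}$, and acyclicity of $C(A)$ follows because any dual cycle of $C(A)$ would subdivide, edge by edge, into a dual cycle of $A$ by Lemma~\ref{excomp:adjacencies}. For the reverse direction, assume $C(A)$ is an acyclic polyomino with all holes of area one. Then no $B$-tile is isolated: interior connectors are excluded by the area-one hypothesis via the crucial observation, and a border $B$-tile always meets a present, non-corner boundary tile because $D_1\subseteq D$. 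Together with Lemma~\ref{excomp:adjacencies} this makes $A$ connected, while a dual cycle in $A$, which must avoid pendant and isolated tiles, would contract to a dual cycle in $C(A)$; hence $A$ is acyclic.

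The step that needs the most care, and which I expect to be the main obstacle, is the bookkeeping along the boundary: $D$ occurs at different sizes in $A$ and in $C(A)$ and may be missing anywhere from one to four corners, so ``contracting the connector tiles'' must correctly match a run of a cycle along the boundary of $A$ with the corresponding run along the boundary of $C(A)$. The clean way to dispatch this is to contract each connected component of the boundary $D$ --- which is always acyclic, being the boundary ring with at least one corner removed --- to a single vertex in both $G_A$ and $G_{C(A)}$; contracting a connected subgraph changes neither the number of components nor the cyclomatic number, so it preserves both connectivity and acyclicity, and it reduces the entire comparison to the interior, where expansion is literally edge-subdivision together with leaf-attachment. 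Verifying that border connector tiles meet present boundary tiles, and that the same boundary components carry the same interior attachments in both graphs, is then the remaining routine check.
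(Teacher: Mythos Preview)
Your argument is correct, and it takes a genuinely different route from the paper's proof. The paper argues by contrapositive in each direction and uses the \emph{hole graph}: disconnection of $A$ (respectively $C(A)$) is witnessed by a path or cycle of corner-adjacent empty cells separating the arrangement, and Lemma~\ref{excomp:adjacencies} is invoked to show that such hole-paths transfer in both directions, with the area-$\geq 2$ case in $C(A)$ being exactly where a separating hole-path of $A$ ``collapses'' under compression. Dual cycles are treated similarly, by tracking how cycles pass through $P_N$ and $U_A$.

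You instead stay entirely inside the dual graph. The observation that every $B$-tile of $P_N$ has at most two live neighbours (two cells of $U_N$, or one cell of $U_N$ and one non-corner boundary tile) lets you recognise $G_A$ as obtained from $G_{C(A)}$, after contracting the boundary components on both sides, by subdividing every edge once and hanging on some pendant $B$-tiles. Since contracting connected acyclic subgraphs, subdividing edges, and attaching/removing leaves all preserve both the number of components and the cyclomatic number, connectivity and acyclicity transfer for free once you have excluded isolated $B$-tiles, and you have correctly identified that isolated $B$-tiles correspond precisely to holes of area $\geq 2$ in $C(A)$. This is a clean reduction to standard graph operations and avoids the hole-graph machinery entirely; the price is the boundary bookkeeping, which you have correctly flagged and whose resolution (contracting each boundary component to a point on both sides) is indeed routine once one checks that a border $B$-tile at $(1,2j)$ attaches to the same boundary component as the interior cell $(1,j)$ does in $C(A)$.

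Two small tightenings are worth making when you write this up. First, the inference ``no $2\times 2$ block is filled, hence every dual cycle uses a plus-center'' is not immediate as stated; what you actually need is that in the subgraph of $G_A$ on boundary tiles and $B$-tiles alone, every $B$-tile has degree at most one and the boundary is a union of paths, so that subgraph is a forest. Second, when suppressing degree-two $B$-tiles you should note that the two neighbours of such a tile are always distinct vertices after the boundary contraction (two different plus-centers, or a plus-center and a boundary component), so no loops are created and the multigraph comparison with $G_{C(A)}$ goes through including the possible multi-edges at the interior corners.
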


\begin{proof}
We prove this by exhibiting the contrapositives in both directions. First Let $A$ be a compressible arrangement. Suppose $C(A)$ has at least two adjacent empty spaces, forming a hole of area at least two. Then the tile separating the corresponding holes of $U_A$ is isolated and $A$ fails to be a polyomino (see Figure \ref{fig:comp1}).

\def\ra{.7}
\begin{figure}[b]
\centering
\hspace*{.7cm}
\begin{subfigure}[t]{.2\textwidth}
\centering
\begin{tikzpicture}
	\fullborder{4}{4}
	\foreach \i in {1,2}{
		\fill[black!80] (\i*\ra,\ra) --++(0:\ra) --++(90:\ra) --++(180:\ra) -- cycle;
	}
	\fullborderLines{4}{4}
	\foreach \i in {1,2}{
		\draw[white] (\i*\ra,\ra) --++(0:\ra) --++(90:\ra) --++(180:\ra) --++(270:\ra);
	}
	\draw[very thick, red] (1.5*\ra,2.5*\ra) --++(0:\ra);
	\draw [fill=red, red] (1.5*\ra,2.5*\ra) circle [radius=.1];
	\draw [fill=red, red] (2.5*\ra,2.5*\ra) circle [radius=.1];
	\draw[thick, newcol] (0.5*\ra,0.5*\ra) --++(0:2*\ra) --++(90:\ra) --++(180:2*\ra) --++(270:\ra);
	\draw[thick, newcol] (1.5*\ra,0.5*\ra) --++(90:\ra);
	\foreach \i in {0.5,1.5,2.5}{
		\draw [fill=newcol, newcol] (\i*\ra,0.5*\ra) circle [radius=.1];
		\draw [fill=newcol, newcol] (\i*\ra,1.5*\ra) circle [radius=.1];
	}
	
    name%
	\node at (2*\ra,-\ra) {$C(A)$};
\end{tikzpicture}
\end{subfigure}
\hspace{.5cm}\raisebox{1.75cm}{$\longleftrightarrow$}
\begin{subfigure}[t]{.4\textwidth}
\centering
\begin{tikzpicture}
	
	\fullborder{7}{7}
	\wtopcheck{7}{7}
	\foreach \i in {2,4}{
		\fill[black!80] (\i*\ra,2*\ra) --++(0:\ra) --++(90:\ra) --++(180:\ra) -- cycle;
	}
	\fullborderLines{7}{7}
	\wtopcheckLines{7}{7}

	\draw[very thick, red] (2.5*\ra,4.5*\ra) -- (3.5*\ra,5.5*\ra) -- (4.5*\ra,4.5*\ra) -- (3.5*\ra,3.5*\ra) --  (2.5*\ra,4.5*\ra);
	\draw [fill=red, red] (2.5*\ra,4.5*\ra) circle [radius=.1];
	\draw [fill=red, red] (4.5*\ra,4.5*\ra) circle [radius=.1];
	\draw[thick, newcol] (0.5*\ra,0.5*\ra) --++(0:4*\ra) --++(90:2*\ra) --++(180:4*\ra) --++(270:2*\ra);
	\draw[thick, newcol] (2.5*\ra,0.5*\ra) --++(90:2*\ra);
	\foreach \i in {0.5,2.5,4.5}{
		\draw [fill=newcol, newcol] (\i*\ra,0.5*\ra) circle [radius=.1];
		\draw [fill=newcol, newcol] (\i*\ra,2.5*\ra) circle [radius=.1];
	}
	
	\node at (3.5*\ra,-\ra) {$A$};

\end{tikzpicture}
\end{subfigure}
\caption{Cycles are preserved by expansion and compression (in green). Two adjacent empty spaces in $C(A)$ disconnect $A$ (in red). Note that $C(A)$ is a polyomino, but $A$ is not.}
\label{fig:comp1}
\end{figure}
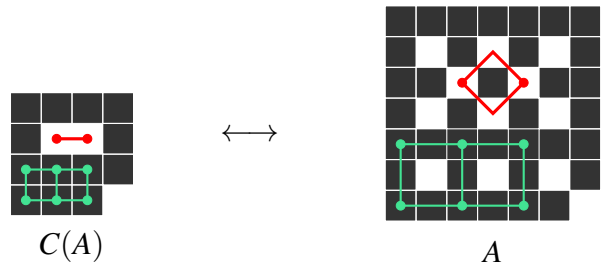

Next suppose that $C(A)$ has a dual cycle. If the cycle does not use any of the boundary layer, then $A$ contains a cycle by Lemma \ref{excomp:adjacencies}. Otherwise the cycle uses the boundary. But any connected section of the boundary used by the cycle in $C(A)$ is also connected in $A$ since by definition they have the same boundary structure, and this again forms a cycle (see Figure \ref{fig:comp1}). 

Now suppose that $C(A)$ is not connected, and thus not a polyomino. Then there must be a connected set of holes which either forms a cycle or is a path that connects to two empty boundary corner spaces at its ends, separating the tiles of $C(A)$ into at least two disjoint pieces (see Figure \ref{fig:discpath}). As in the preceding argument, by Lemma \ref{excomp:adjacencies} a cycle of holes expands to a cycle of holes in $A$, which would force $A$ to be disconnected. And a path of holes connecting two empty boundary corners expands to a path connecting two empty boundary corners of $A$, which is then disconnected. Thus, if $A$ has connected interior and is acyclic, then $C(A)$ is connected, acyclic, and all holes have area one.

\def\ra{.7}
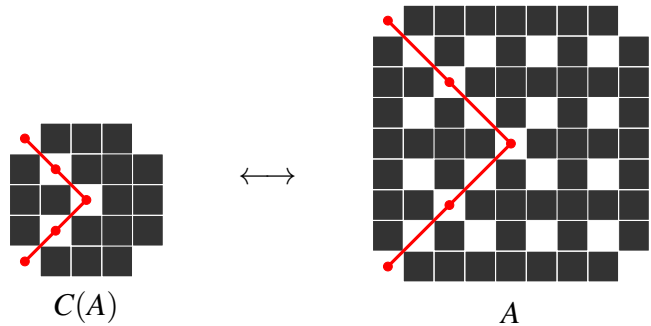
\begin{figure}[H]
\centering
\hspace*{1cm}
\begin{subfigure}[t]{.25\textwidth}
\centering
\begin{tikzpicture}
	\gardenborder{5}{5}
	\wtopcheck{5}{5}
	\foreach \i in {1,3}{
		\fill[black!80] (3*\ra,\i*\ra) --++(0:\ra) --++(90:\ra) --++(180:\ra) -- cycle;
	}
	\gardenborderLines{5}{5}
	\wtopcheckLines{5}{5}

	\draw[very thick, red] (0.5*\ra,0.5*\ra) -- (2.5*\ra,2.5*\ra) -- (0.5*\ra,4.5*\ra);
	\foreach \i in {0.5,1.5,2.5}{
		\draw [fill=red, red] (\i*\ra,\i*\ra) circle [radius=.1];
	}
	\foreach \i in {0.5,1.5}{
		\draw [fill=red, red] (\i*\ra,5*\ra-\i*\ra) circle [radius=.1];
	}
	
	\node at (2.5*\ra,-\ra) {$C(A)$};
\end{tikzpicture}
\end{subfigure}
\hspace{.1cm}
\raisebox{2cm}{$\longleftrightarrow$}
\begin{subfigure}[t]{.4\textwidth}
\centering
\begin{tikzpicture}
	
	\gardenborder{9}{9}
	\wtopcheck{9}{9}
	\fill[black!80] (2*\ra,4*\ra) --++(0:\ra) --++(90:\ra) --++(180:\ra) -- cycle;
	\foreach \i in {2,6}{
		\fill[black!80] (4*\ra,\i*\ra) --++(0:\ra) --++(90:\ra) --++(180:\ra) -- cycle;
	}
	\foreach \i in {2,4,6}{
		\fill[black!80] (6*\ra,\i*\ra) --++(0:\ra) --++(90:\ra) --++(180:\ra) -- cycle;
	}
	
	\gardenborderLines{9}{9}
	\wtopcheckLines{9}{9}

	\draw[very thick, red] (0.5*\ra,0.5*\ra) -- (4.5*\ra,4.5*\ra) -- (0.5*\ra,8.5*\ra);
	\foreach \i in {0.5,2.5,4.5}{
		\draw [fill=red, red] (\i*\ra,\i*\ra) circle [radius=.1];
	}
	\foreach \i in {0.5,2.5}{
		\draw [fill=red, red] (\i*\ra,9*\ra-\i*\ra) circle [radius=.1];
	}

	\node at (4.5*\ra,-\ra) {$A$};

\end{tikzpicture}
\end{subfigure}
\caption{A path of holes connecting two empty corners disconnects both $A$ and $C(A)$.}
\label{fig:discpath}
\end{figure}

On the other hand, if $A$ does not have connected interior, then it has either a cycle of holes or a path of holes connecting two empty boundary corners. Any path of holes must alternate between holes in $P_N$ and holes in $U_A$. Then suppose two holes in $U_A$ are mutually adjacent to a hole in $P_N$. These holes are either consecutive spaces of $U_A$ along a row or column, which would compress to a hole of area at least two in $C(A)$ (see Figure \ref{fig:comp1}), or they are diagonal from each other and thus corner adjacent in $C(A)$. If no holes of area at least two are created, then a cycle compresses to a cycle and a path connecting two empty boundary corners of $A$ compresses to a path connecting two boundary corners of $C(A)$ (see Figure \ref{fig:discpath}). Thus either a hole of area at least two is created, or $C(A)$ is disconnected.

Similarly, if there is a cycle of tiles in $A$, then the tiles of the cycle in the interior alternate between $P_N$ and $U_A$, and two tiles in this cycle from consecutive spaces of $U_A$ compress to adjacent tiles in $C(A)$. Since $A$ and $C(A)$ have the same boundary, this cycle compresses to a cycle in $C(A)$, as in the previous argument for expanding a cycle (see Figure \ref{fig:comp1}). Hence if $C(A)$ is conneceted, acyclic, and only has holes of area one, then $A$ has connected interior and is acyclic.
\end{proof}

Observe that if $A$ is compressible, then it cannot have holes of area more than one, and when $D=D_2$ both $A$ and $C(A)$ will necessarily achieve the minimum perimeter for their total area.

\begin{corollary}\label{comp:eff}
 A compressible acyclic polyomino $A$ with boundary $D_2$ is efficiently structured if and only if $C(A)$ is efficiently structured.
\end{corollary}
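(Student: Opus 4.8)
The plan is to unpack the definition of efficient structure (Definition~\ref{D:efficientlystructured}) into its three separate requirements---being acyclic, having every hole of area one, and attaining minimal outer perimeter---and to observe that under the standing hypotheses two of these three conditions hold automatically for both $A$ and $C(A)$. The biconditional then collapses to the single condition of acyclicity, which is transferred across compression by the Compression Lemma.

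First I would record the two ``free'' conditions. Since $A$ is compressible, the observation immediately preceding the corollary guarantees that $A$ has no hole of area exceeding one; and by the Compression Lemma (Lemma~\ref{compressionlem}), an acyclic polyomino $A$ compresses to an acyclic polyomino $C(A)$ each of whose holes has area one. Hence the hole-area condition is available for both arrangements. Because the boundary is exactly $D_2$, the same observation gives that both $A$ and $C(A)$ attain the minimum perimeter for their respective total areas, so the minimal-outer-perimeter condition holds for both as well. If desired I would make this perimeter point explicit: a $D_2$ boundary realizes an $N\times N$ (respectively $(N+1)/2\times(N+1)/2$) square with a single corner tile removed, which lowers the total area by one while leaving the bounding-rectangle perimeter unchanged, and by the Harary--Harborth formula $p_{min}(n)=2\lceil 2\sqrt{n}\rceil$ this perimeter is exactly $p_{min}$ of the reduced area.

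With those two conditions automatic for both arrangements, I would read off the equivalence as a chain. The polyomino $A$ is efficiently structured precisely when it is an acyclic polyomino, the hole-area and perimeter conditions being free; by the Compression Lemma this holds precisely when $C(A)$ is an acyclic polyomino with every hole of area one; and, the perimeter condition for $C(A)$ again being free, this holds precisely when $C(A)$ is efficiently structured. Chaining these equivalences yields the stated biconditional.

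I do not anticipate a genuine obstacle, as the corollary is essentially a repackaging of the Compression Lemma together with the perimeter observation. The only point deserving care is the correct invocation of minimal perimeter for \emph{both} the large and the small square---that is, verifying that the $D_2$ boundary attains $p_{min}$ of the total area in each size---but this is exactly the content of the preceding observation, and can be reverified in one line from the Harary--Harborth formula as indicated above.
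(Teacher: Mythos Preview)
Your proposal is correct and matches the paper's approach: the paper does not give a separate proof of the corollary but simply records the observation that compressibility forces all holes of $A$ to have area one and that boundary $D_2$ forces minimal outer perimeter for both $A$ and $C(A)$, leaving the remaining equivalence to the Compression Lemma exactly as you outline. Your extra verification of the perimeter claim via Harary--Harborth is a helpful elaboration of what the paper leaves as an observation.
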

 
Moreover, containing the subarrangement $P_{N}$ is a necessary condition for crystallization of a polyomino contained in an odd $N\times N$ square with boundary layer $D_1\subseteq D \subseteq D_2$.

\begin{lemma}\label{oddholes} 
Let $N\ge5$ be an odd positive integer. If $A$ is an efficiently structured polyomino with square $(N-2)\times(N-2)$ interior, then all spaces in the odd rows of $W$ in the checkerboard partition of the interior must be holes.
\end{lemma}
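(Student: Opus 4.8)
The plan is to combine the \hyperref[checkered]{Checkerboard Lemma} with a connectivity argument driven purely by acyclicity. Write $M=N-2$, so that the interior is an odd $M\times M$ square, and split the white squares $W$ into the \emph{odd-row} squares $O$ (those in odd rows, hence odd columns, of the interior) and the \emph{even-row} squares $E$ (even rows and even columns); these are precisely the two color classes of the corner-adjacency grid on $W$. Since an efficiently structured polyomino is acyclic with every hole of area one, and $A$ is assumed to have square interior, Lemma \ref{checkered} applies and I would first record its three outputs: every $B$ square is a tile, every hole lies in $W$, and the outermost layer of the interior alternates. In particular every $O$ square on the outermost interior layer is a hole, so any hypothetical $O$-tile is forced to be strictly interior.

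Next I would extract the local consequences of acyclicity. The two diagonal squares of any interior $2\times 2$ block are $B$ squares, hence tiles, so a $2\times 2$ block of all tiles would create a dual $4$-cycle; thus at most one of the two $W$ squares of such a block can be a tile. Two corollaries drive the argument: (i) if $w\in O$ is a tile, then all four of its corner-adjacent $E$ squares are holes; and (ii) no $B$ tile is edge-adjacent to both an $O$-tile and an $E$-tile, since together with the surrounding filled $B$ squares they would complete an all-tile $2\times 2$ block. I would also record the orientation bookkeeping: each interior $B$ tile bridges its two collinear $O$-neighbors along one axis and its two collinear $E$-neighbors along the perpendicular axis.

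The core step is a disconnection argument. Suppose toward a contradiction that some $O$ square is a tile $w$; by the above it is strictly interior and its four edge-neighbors are $B$ tiles. For each such neighbor $b$, the two neighbors of $b$ lying perpendicular to the $w$–$b$ direction are exactly corner-adjacent $E$ squares of $w$, which are holes by (i); hence $b$ is edge-adjacent only to $O$ squares (namely $w$ and the opposite collinear $O$ square). Iterating, the connected component of $w$ in the dual graph consists solely of $O$-tiles together with the $B$ tiles bridging them, all strictly interior, and by (ii) it contains no $E$-tile. On the other hand the polyomino has a nonempty boundary layer (its boundary contains $D_1$), and the boundary layer connects inward only through $E$-tiles, since each outermost $B$ tile has its $O$-neighbors among the outer-layer holes and only its inward neighbor in $E$. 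Thus the component of $w$ is disjoint from the component containing the boundary, contradicting connectedness of $A$. Hence no $O$ square is a tile, i.e.\ every space in the odd rows of $W$ is a hole.

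The main obstacle is precisely the parity bookkeeping that isolates this ``$O$-network'': one must verify that, once the corner $E$ squares of an $O$-tile are known to be holes, the bridging $B$ tiles connect $O$ squares only to $O$ squares and never leak into the $E$ squares or the boundary layer. This is exactly where facts (i) and (ii), together with the outermost interior layer being entirely holes within $O$, are indispensable. The remaining ingredients — that any $O$-tile must be strictly interior, and that the boundary layer genuinely sits in a separate component reached only via $E$-tiles — are then routine consequences of the Checkerboard Lemma and the orientation bookkeeping.
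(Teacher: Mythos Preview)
Your proof is correct and follows essentially the same approach as the paper's: both arguments observe that an $O$-tile forces its four corner-adjacent $E$ squares to be holes (to avoid a $2\times 2$ dual cycle), deduce that the dual-graph component of any $O$-tile can only propagate through other $O$ squares via $B$-bridges, and then use the fact that the outermost interior layer has all its $O$ squares empty to conclude that such a component never reaches the boundary, contradicting connectedness. Your version is more explicit about the $O$/$E$ partition and the orientation bookkeeping, whereas the paper compresses this into the single sentence ``the only way to connect $t$ to the boundary \ldots\ is by filling a path of spaces in the odd rows and columns of $W$,'' but the underlying mechanism is identical.
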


\begin{proof}
 For $N$ odd, since we assume the top left corner of the interior is in $W$, all corners of the interior are in $W$ by parity. Then by the \hyperref[checkered]{Checkerboard Lemma} and by parity, the odd rows of $W$ are precisely those which have holes in the outermost layer of the interior.
 
 \begin{figure}[b]
    \centering
    \begin{subfigure}[t]{.4\textwidth}
        \centering
        \begin{tikzpicture}
            \fullborder{11}{11}
            \wtopcheck{11}{11}
            \foreach \i in {1,2,3}{
            	\fill[newcol!60] (3*\ra,2*\i*\ra+\ra) --++(0:\ra) --++(90:\ra) --++(180:\ra) -- cycle;
		\fill[newcol!60] (2*\i*\ra+\ra,7*\ra) --++(0:\ra) --++(90:\ra) --++(180:\ra) -- cycle;
            }
            \node at (3.5*\ra,7.5*\ra) {$t$};
            \fullborderLines{11}{11}
            \wtopcheckLines{11}{11}
        \end{tikzpicture}
    \end{subfigure}
    \begin{subfigure}[t]{.4\textwidth}
        \centering
        \begin{tikzpicture}
            \fullborder{11}{11}
            \wtopcheck{11}{11}
            \foreach \i in {1,2,3}{
            	\fill[newcol!60] (7*\ra,2*\i*\ra+\ra) --++(0:\ra) --++(90:\ra) --++(180:\ra) -- cycle;
		\fill[newcol!60] (2*\i*\ra+\ra,5*\ra) --++(0:\ra) --++(90:\ra) --++(180:\ra) -- cycle;
            }
            \node at (7.5*\ra,5.5*\ra) {$t$};
            \fullborderLines{11}{11}
            \wtopcheckLines{11}{11}
        \end{tikzpicture}
    \end{subfigure}
        \caption{Tiles in odd rows of $W$ cannot connect to the boundary.}
        \label{fig:oddrows}
\end{figure}
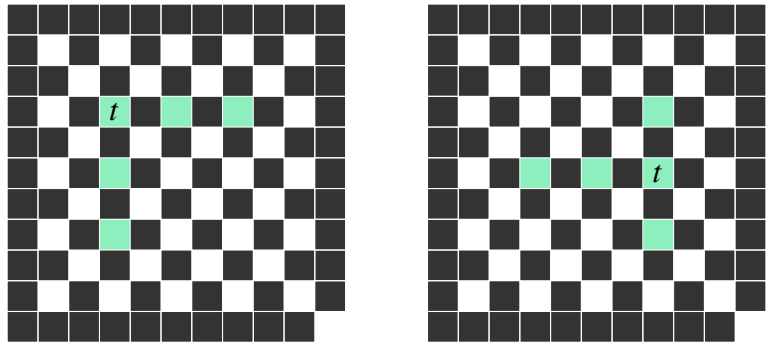
 
 If a space in an odd row of $W$ is filled by tile $t$, then the spaces which are corner adjacent to $t$ in even rows of $W$ must be empty to avoid creating dual cycles. So the only way to connect $t$ to the boundary in order to satisfy the connected interior condition for polyominoes is by filling a path of spaces in the odd rows and columns of $W$. But as noted above, these rows and columns coincide with the the holes in the outermost layer of $W$, and thus cannot connect to the boundary without creating dual cycles. Therefore they must all be holes.
\end{proof}



\begin{corollary}\label{odd:compressible}
 Let $N\ge 5$ be an odd integer. If $A$ is an efficiently structured polyomino with square $(N-2)\times(N-2)$ interior, then $A$ is compressible.
\end{corollary}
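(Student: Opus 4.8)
The plan is to check the two requirements in the definition of compressibility directly: that the interior of $A$ realizes the subarrangement $P_N$ (with the undetermined spaces $U_N$ recording whatever configuration $U_A$ that $A$ happens to have there), and that the boundary layer $D$ of $A$ satisfies $D_1\subseteq D\subseteq D_2$. The interior requirement is essentially handed to us by the two preceding results. Since $A$ is efficiently structured it is acyclic with every hole of area one, and its interior is the rectangle $(N-2)\times(N-2)$, so the \hyperref[checkered]{Checkerboard Lemma} applies and gives that every space of $B$ in the checkerboard partition of the interior is filled; this is exactly condition (a) defining $P_N$. Lemma \ref{oddholes} gives that every space in the odd rows of $W$ is a hole, which is condition (b). Hence the interior of $A$ agrees with $P_N$ on all determined spaces, and the even rows of $W$, which are precisely the spaces of $U_N$, carry the subarrangement $U_A$. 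So only the boundary remains.

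For the boundary I would first show that every non-corner space of the outer ring of the $N\times N$ bounding rectangle is filled, handling the two alternating types of ring space separately. A ring space lying directly outward of a $W$-space of the outermost interior layer is adjacent there to a hole of area one (by the \hyperref[checkered]{Checkerboard Lemma} these outermost $W$-spaces, including the interior corners, are holes, and their interior-facing neighbors are $B$-tiles); were the outward ring neighbor empty, it would join the hole into an empty region of area at least two, so it must be filled. A ring space $s$ lying outward of a $B$-tile of the outermost interior layer is the remaining case; here $s$ is adjacent to that $B$-tile and to its two ring neighbors, which are of the first type and hence already filled. If $s$ were empty it would be a notch, and filling it would yield a polyomino $A'$ with one more tile, the same holes, and outer perimeter decreased by $2$. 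But then $p_o(A')=p_{min}(n+h)-2$, while Lemma \ref{nop} and the non-decreasing property of $p_{min}$ give $p_o(A')\ge p_{min}(n+1+h)\ge p_{min}(n+h)$, a contradiction. Hence every non-corner ring space is filled.

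Finally, not all four corners of the ring can be filled, for together with the filled non-corner spaces this would complete the whole ring into a dual cycle, contradicting acyclicity; thus the boundary layer has all non-corner spaces filled and at least one empty corner, which is exactly the condition $D_1\subseteq D\subseteq D_2$. The same fill-versus-$p_{min}$ comparison (now applied to a protruding tile, whose removal would drop $p_o$ below $p_{min}(n+h)$) rules out any tile lying beyond this ring, so the bounding rectangle is exactly $N\times N$. Combining the interior and boundary descriptions gives $A=D\cup P_N\cup U_A$, so $A$ is compressible. I expect the boundary analysis to be the only real work: the interior is provided by the \hyperref[checkered]{Checkerboard Lemma} and Lemma \ref{oddholes}, whereas excluding a notch outward of a $B$-tile genuinely requires the minimal-outer-perimeter hypothesis—acyclicity and area-one holes alone permit such an indentation—and it is there that the fill-and-compare-perimeter argument via monotonicity of $p_{min}$ is essential.
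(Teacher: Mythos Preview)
Your argument is correct and matches the paper's approach: the Checkerboard Lemma and Lemma~\ref{oddholes} supply the interior conditions of $P_N$, and acyclicity forbids a full ring, giving $D\subseteq D_2$. The paper's one-line proof omits your explicit ring analysis because the containment $D_1\subseteq D$ was already recorded inside the proof of the Checkerboard Lemma as a direct consequence of having rectangular interior; your two-case treatment of the non-corner ring spaces and the protruding-tile check are thus more detail than the paper supplies, but not a different idea.
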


This follows immediately from the \hyperref[checkered]{Checkerdboard Lemma} and Lemma \ref{oddholes}. In particular, for $N$ odd with $N\not\equiv 1 \text{ mod } 3$, we have that $m(t_{N^2})+t_{N^2}=N^2-1$ and an efficiently structured polyomino with $t_{N^2}$ holes is compressible and has boundary $D_2$. This leads to a final obstruction to $g(t_{\alpha}) = m(t_{\alpha})$, and a proof of Theorem \ref{T:uniqueness}.

\begin{lemma}\label{L:oddintobst}
 Let $N$ be an odd positive integer such that $N\not\equiv 1 \text{ mod } 3$, and $N\neq2^l+1$ for any positive integer $l$. Then $g(t_{N^2}) > m(t_{N^2})$, and $h_{N^2}\le t_{N^2}-1$.
\end{lemma}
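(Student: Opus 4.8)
The plan is to argue by contradiction. Suppose $g(t_{N^2})=m(t_{N^2})$, so that there is a crystallized polyomino $A$ with $t_{N^2}$ holes and $m(t_{N^2})$ tiles. Since $M(m(t_{N^2}),t_{N^2})=t_{N^2}$ by Lemma \ref{L:propertiesm}, Lemma \ref{Meff} shows $A$ is efficiently structured. Because $N\not\equiv 1\bmod 3$, equation (\ref{Toffset}) gives total area $m(t_{N^2})+t_{N^2}=N^2-1$, so by the total-area observation preceding Lemma \ref{nosqs} the polyomino $A$ must sit in the $N\times N$ square with boundary $D_2$ and square $(N-2)\times(N-2)$ interior. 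The idea is then to compress $A$ repeatedly and track the side length of the ambient square, showing that for $N\neq 2^l+1$ this process terminates in an \emph{even} square, where the Checkerboard Lemma forces a dual cycle and hence a contradiction.

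First I would set $A_0=A$, $N_0=N$, and inductively $A_{i+1}=C(A_i)$ in the $N_{i+1}\times N_{i+1}$ square with $N_{i+1}=(N_i+1)/2$. Writing $a_i=N_i-1$ turns the recursion into $a_{i+1}=a_i/2$, so $N_i=1+(N-1)/2^i$, and $N_i$ is odd precisely while $2^{i+1}\mid (N-1)$. Letting $s=v_2(N-1)$ be the $2$-adic valuation of $N-1$, the values $N_0,\dots,N_{s-1}$ are odd and $N_s$ is even. The crucial arithmetic input is the hypothesis $N\neq 2^l+1$: it forces $(N-1)/2^s$ to be an odd integer strictly greater than $1$, hence at least $3$, so that $N_{s-1}=1+2\cdot(N-1)/2^s\ge 7$ and $N_s=1+(N-1)/2^s\ge 4$. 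Consequently each $A_i$ with $i\le s-1$ lives in an odd square of side at least $5$ with boundary $D_2$ and square interior; by Corollary \ref{odd:compressible} it is compressible, and by Corollary \ref{comp:eff} its compression $A_{i+1}$ is again efficiently structured with boundary $D_2$, and therefore again has square interior and total area $N_{i+1}^2-1$. Iterating $s$ times produces an efficiently structured polyomino $A_s$ in the even $N_s\times N_s$ square with boundary $D_2$.

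This is exactly the configuration ruled out in Case 1 of the proof of Lemma \ref{nodiv2}. Since $A_s$ is acyclic with all holes of area one and has rectangular interior, the Checkerboard Lemma forces every space of $B$ to be filled. As $N_s$ is even, the two interior corners lying in $B$ are filled tiles, while boundary $D_2$ leaves only a single boundary corner empty; by the pigeonhole principle at least one filled interior corner completes a $2\times 2$ block with the boundary, creating a dual cycle and contradicting acyclicity. Hence no such $A$ exists, and $g(t_{N^2})>m(t_{N^2})$.

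For the inequality $h_{N^2}\le t_{N^2}-1$, I would instead suppose $h_{N^2}=t_{N^2}$ and take a polyomino contained in the $N\times N$ square with $t_{N^2}$ holes. Its tile count is at least $g(t_{N^2})\ge m(t_{N^2})+1$ and its holes contribute area at least $t_{N^2}$, so its total area is at least $N^2$, forcing it to fill the entire square except for $t_{N^2}$ area-one holes; in particular the whole boundary layer is filled. Removing any corner tile, which bounds no hole and is not a cut vertex of the boundary loop, leaves a connected polyomino with the same holes but only $m(t_{N^2})$ tiles, contradicting the first part. The hard part will be the second paragraph: one must confirm that the compression can be iterated exactly the right number of times and that the terminal square has even side. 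This is precisely where the exclusion $N\neq 2^l+1$ enters, since for $N=2^l+1$ the chain instead terminates at the $3\times 3$ square of the K--R base case — which is exactly why those values are exceptional and why the same compression analysis delivers the uniqueness statement of Theorem \ref{T:uniqueness}.
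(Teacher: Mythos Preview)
Your proof is correct and follows essentially the same strategy as the paper's: write $N-1=2^s q$ with $q>1$ odd (the paper writes $N=2^lp+1$), iterate compression $s$ times using Corollaries \ref{odd:compressible} and \ref{comp:eff} to land in an even square of side $q+1\ge 4$ with boundary $D_2$, and derive a contradiction from the even-square checkerboard argument; the $h_{N^2}$ part is likewise handled by the full-boundary/remove-a-corner trick. The only cosmetic difference is that you reproduce the even-square contradiction explicitly, whereas the paper invokes (the proof of) Lemma \ref{nodiv2}.
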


\begin{proof}
  Let $N=2^lp+1$ for some $l\ge1$ and some $p > 1$ for which $2\nmid p$, and suppose that $A$ is a polyomino with $t_{N^2}$ holes and $m(t_{N^2})$ tiles. For $N\not\equiv 1 \text{ mod } 3$ this gives total area $N^2-1$. By Corollary \ref{odd:compressible}, $A$ has boundary $D_2$ and is compressible. And $A$ is efficiently structured by Lemma \ref{Meff}, so by Corollary \ref{comp:eff} $C(A)$ is an efficiently structured polyomino with boundary $D_2$, and is itself compressible if $(N+1)/2$ is odd.
  
  Then let $N_0=N$, and let $N_i=(N_{i-1}+1)/2$ be the length of the square for $C^i(A)$, the $i$-th compression of $A$. Since $N=2^lp+1$, we have that $N_i=2^{l-i}p+1$, which is odd for all $i < l$. Therefore $l$ compressions can be performed, with $C^l(A)$ an efficiently structured polyomino with boundary $D_2$ in an $N_l\times N_l$ square for even $N_l = p+1 \ge4$. With $D_2$ as its boundary layer, the total area of $C^l(A)$ must be $N_{l}^{2}-1$, but by Lemma \ref{nodiv2} there cannot be an efficiently structured polyomino with this area in an even square. Therefore $g(t_{N^2}) > m(t_{N^2})$.
  
  And if $h_{N^2}=t_{N^2}$, then by the preceding argument a crystallized polyomino with $h_{N^2}$ holes must have total area at least $N^2$. If it has total area exactly $N^2$, then the boundary layer is completely filled and forms a cycle. As in the proof of Lemma \ref{nodiv2}, one of the corner tiles is extraneous and can be removed, which is a contradiction. Otherwise the total area is more than $N^2$ and such a polyomino cannot fit in this square. Therefore $h_{N^2}\le t_{N^2}-1$.
\end{proof}

\vspace{1cm}
\begin{figure}[H]
    \centering
\vspace{1cm}
\hspace*{1cm}
    \raisebox{.45cm}{\begin{subfigure}[t]{.25\textwidth}
        \centering
        \begin{tikzpicture}
            \fullborder{3}{3}
            \fullborderLines{3}{3}
            \node at (1.5*\ra,1.5*\ra) {$\ast$};
        \end{tikzpicture}
    \end{subfigure}
}
    \raisebox{.9cm}{$\xrightarrow{E(S_1)}$}
    \begin{subfigure}[t]{.25\textwidth}
        \centering
        \begin{tikzpicture}
            \fullborder{5}{5}
            \wtopcheck{5}{5}
            \fullborderLines{5}{5}
            \wtopcheckLines{5}{5}
            \node at (2.5*\ra,2.5*\ra) {$\ast$};
        \end{tikzpicture}
    \end{subfigure}
    \raisebox{.9cm}{$\longrightarrow$}
    \begin{subfigure}[t]{.25\textwidth}
        \centering
        \begin{tikzpicture}
            \fullborder{5}{5}
            \wtopcheck{5}{5}
            \fullborderLines{5}{5}
            \wtopcheckLines{5}{5}
        \end{tikzpicture}
    \end{subfigure}

\vspace{.3cm}
\hspace*{10.1cm} $\Big\downarrow$

    \begin{subfigure}[t]{.3\textwidth}
        \centering
        \begin{tikzpicture}
            \fullborder{9}{9}
            \wtopcheck{9}{9}
            \foreach \i in {0,1}{
                \fill[black!80] (4*\ra,4*\i*\ra+2*\ra) --++(0:\ra) --++(90:\ra) --++(180:\ra) -- cycle;
                \fill[black!80] (4*\i*\ra+2*\ra,4*\ra) --++(0:\ra) --++(90:\ra) --++(180:\ra) -- cycle;
            }
            \fullborderLines{9}{9}
            \wtopcheckLines{9}{9}
        \end{tikzpicture}
    \end{subfigure}
    \raisebox{1.72cm}{$\longleftarrow$}
    \begin{subfigure}[t]{.3\textwidth}
        \centering
        \begin{tikzpicture}
            \fullborder{9}{9}
            \wtopcheck{9}{9}
            \foreach \i in {0,1}{
                \fill[newcol!90] (4*\ra,4*\i*\ra+2*\ra) --++(0:\ra) --++(90:\ra) --++(180:\ra) -- cycle;
                \fill[newcol!90] (4*\i*\ra+2*\ra,4*\ra) --++(0:\ra) --++(90:\ra) --++(180:\ra) -- cycle;
                \node at (2.5*\ra,2.5*\ra+4*\i*\ra) {$\ast$};
                \node at (6.5*\ra,2.5*\ra+4*\i*\ra) {$\ast$};
            }
            \node at (4.5*\ra,4.5*\ra) {$\ast$};
            \fullborderLines{9}{9}
            \wtopcheckLines{9}{9}
        \end{tikzpicture}
    \end{subfigure}
    \raisebox{1.72cm}{$\xleftarrow{E(S_2)}$}
    \raisebox{0.825cm}{\begin{subfigure}[t]{.2\textwidth}
        \centering
        \begin{tikzpicture}
            \fullborder{5}{5}
            \gtopcheck{5}{5}
            \fullborderLines{5}{5}
            \node at (2.5*\ra,2.5*\ra) {$\ast$};
            \node at (1.5*\ra,1.5*\ra) {$\ast$};
            \node at (1.5*\ra,3.5*\ra) {$\ast$};
            \node at (3.5*\ra,1.5*\ra) {$\ast$};
            \node at (3.5*\ra,3.5*\ra) {$\ast$};
            \wtopcheckLines{5}{5}
        \end{tikzpicture}
    \end{subfigure}
}
    \caption{Expansion of $S_1$, with $E(S_1)=S_2$ and $E^2(S_1)=E(S_2)=S_3$.}
    \label{fig:exps1}
\end{figure}
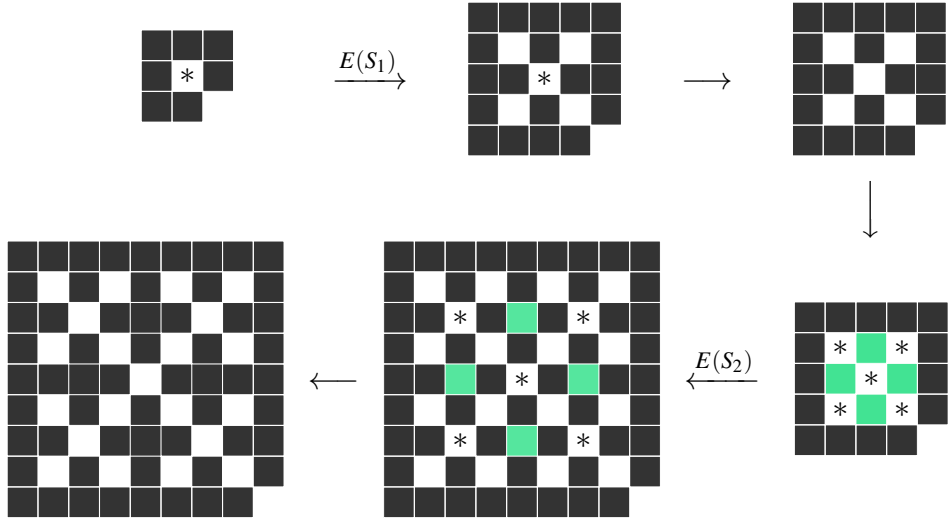

\begin{proof}[\textbf{Proof of Theorem \ref{T:uniqueness}}]
 For $N=2^l+1$, the existence of crystallized polyominoes with $t_{N^2}=(2^{2l}-1)/3$ holes and $m(t_{N^2})=[(2^{2l+1}+ 3 \cdot 2 ^ {l+1} + 4)/3] -1$ tiles was proved in \cite{kahle2018polyominoes}. We prove the uniqueness of those constructions by simply compressing them all down to $S_1$, the unique crystallized polyomino with a single hole (see Figure \ref{three_holes}).  
 
 In particular fix $l > 1$, let $N=2^l+1$ and $N_i=(N_{i-1}+1)/2=2^{l-i}+1$, and let $A$ be a compressible polyomino with $(2^{2l}-1)/3$ holes and $[(2^{2l+1}+ 3 \cdot 2 ^ {l+1} + 4)/3] -1$ tiles. Since $2^l+1\not\equiv 1 \text{ mod } 3$, this has total area $N^2-1$, and thus $A$ has boundary $D_2$. For $i < l$ all of the $N_i$ are odd, with $N_{l-1}=3$. Therefore $C^{i}(A)$ is compressible with boundary $D_2$ for all $i < l$, and thus $C^{l-1}(A)=S_1$. But compression and expansion are inverses, so if two polyominos $A$ and $A'$ have that $C^k(A)=C^k(A')$, then $A=A'$. In particular the polyominoes $S_l$ are the unique crystallized polyominoes with $(2^{2l}-1)/3$ holes, and $S_l=E^{l-1}(S_1)$. 
\end{proof}

The method of expansion and compression then gives an alternate way to construct the sequence $S_l$. For all other squares and pronic numbers, we give constructions in Section \ref{constructions} which show that $h_{\alpha}=t_{\alpha}-1$. Assuming these constructions, we use expansion to prove Theorems \ref{T:holesminper} and \ref{T:gpronicperfectsquare}. \\

\begin{proof}[\textbf{Proof of Theorems \ref{T:holesminper} and \ref{T:gpronicperfectsquare}}]
 By Lemmas \ref{nosqs} and \ref{nodiv2}, Given the constructions of Section \ref{constructions} and the sequence $S_l$, we have crystallized polyominoes with $h_\alpha$ holes and $\alpha-h_{\alpha}-C$ tiles for the appropriate $C$, except when $\alpha=N^2$ for odd $N$ such that $N\not\equiv 1 \text{ mod } 3$ and $N\neq 2^1+1$ for any positive integer $l$.
 
 Consider such an $N$, which can be written as $N=2^lp+1$ for $2,3\nmid p$. Then as in the proof of Lemma \ref{L:oddintobst}, $p+1$ is even and we can take a crystallized polyomino $A$ with $h_{\alpha}$ holes for $\alpha =(p+1)^2$ and expand it $l$ times to get a crystallized polyomino $C^l(A)$ in an $N\times N$ square. Since $3\nmid p$, $p+1\not\equiv 1 \text{ mod } 3$, and $A$ will be the appropriate construction from Section \ref{evsq2} or \ref{evsq0}, each with boundary $D_1$. Then $C^l(A)$ has boundary $D_1$ and total area $N^2-4$. By equation (\ref{tsq}), Lemma \ref{L:propertiesm}, and Lemma \ref{L:oddintobst}, $C^l(A)$ has $h_{N^2}=t_{N^2}-1$ holes and $N^2-h_{N^2}-4$ tiles.
\end{proof}

\section{Constructions of Crystallized Polyominoes}\label{constructions}
We call a sub-polyomino which is a sequence of overlapping pluses a \emph{plus tree}, and refer to the place where it connects to a section of the boundary layer as its root. The basic elements of our constructions involve choosing an appropriate boundary layer, and filling the interior with efficiently spaced plus trees. A plus tree growing in a certain direction is three spaces wide, and a disjoint plus tree is not allowed to connect to any of its tiles or fill any of the empty spaces between the pluses. 

These polyominoes all have $t_{\alpha}-1$ holes and will be efficiently structered by construction, and are thus an immediate proof of Theorems \ref{T:holesminper} and \ref{T:gpronicperfectsquare}. For $N\equiv l \text{ mod } 3$, we denote the $k$-th element of the even square sequences by $S_{l,k}$, and the $k$-th element of the pronic rectangle sequences by $R_{l,k}$, where $N$ is the shorter of the two side lengths. All equations for total area are taken from the appropriate cases in equations (\ref{Toffset}) and (\ref{tsq}), and Lemma \ref{L:propertiesm}.

\subsection{Even Squares, $N\equiv 1 \text{ mod } 3$}\label{evsq1}

\def\ra{.7}

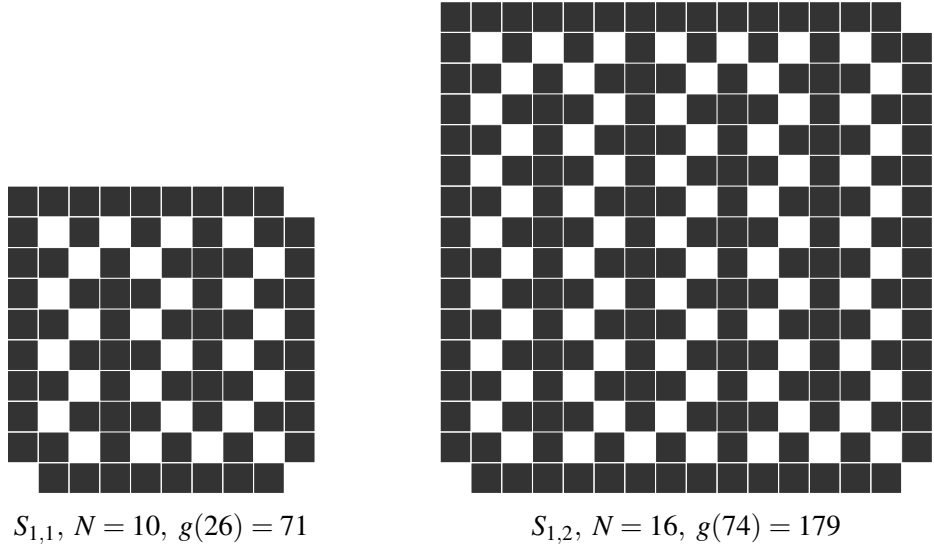
\begin{figure}[H]
    \centering
    \begin{subfigure}[t]{.4\textwidth}
        \centering
        \begin{tikzpicture}
            \Swl{2}{3}
        \end{tikzpicture}
        \caption{$S_{1,1}, \ N=10, \ g(26)=71$}
    \end{subfigure}
\hspace{.6cm}
    \begin{subfigure}[t]{.5\textwidth}
    \centering
        \begin{tikzpicture}
            \Swl{4}{6}
        \end{tikzpicture}
        \caption{$S_{1,2}, \ N=16, \ g(74)=179$}
    \end{subfigure}
\caption{The first two elements of the sequence $S_{1,k}$.}
\label{fig:evens1}
\end{figure}

The polyomino $S_{1,k}$ is made up of $2k$ vertical plus trees with roots alternating between the bottom and top boundaries from left to right, each with $3k$ pluses, and has boundary $D_1$ with the top left corner additionally filled. To satisfy the \hyperref[checkered]{Checkerboard Lemma} we also require that the spaces of the outermost layer of the interior alternate between tiles and holes, with a hole in the top left corner.

Each plus tree takes up three of its own columns, and along with the boundary and outermost layer of the interior on each side the total width is $N=4+6k$. Similarly, for a tree rooted at the bottom, count each hash of a plus along with the row above it, then there are two additional rows above and two below, and the height is also $4+6k$. This length is always 1 mod 3, so the minimal total area for $(t_{N^2}-1)$ holes is $N^2-3$. 

Since $N$ is even the bottom left and top right corners will be indented, and the boundary sections are all connected. Then $S_{1,k}$ achieves total area $(4+6k)^2-3$, and by construction it is efficiently structured. Hence $S_{1,k}$ is crystallized for all $k\ge 1$, with 
\begin{equation}
    \begin{split}
        h(S_{1,k})&=\frac{(N-1)^2}{3}-1=\frac{(3+6k)^2}{3}-1=12k^2+12k+2,\\
        |S_{l,k}|&=(4+6k)^2-h(S_{1,k})-3=24k^2+36k+11.
    \end{split}
\end{equation}


\subsection{Even Squares, $N\equiv 2 \text{ mod } 3$}\label{evsq2}

\def\ra{.7}

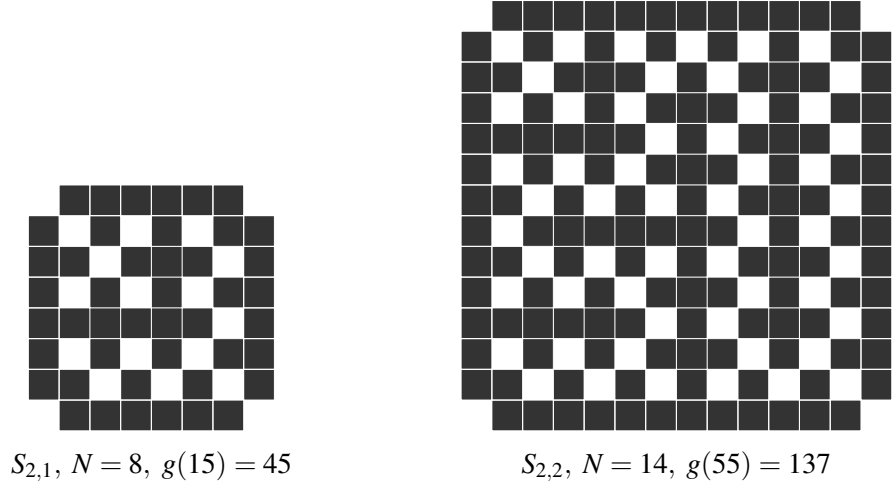
\begin{figure}[H]
    \centering
    \begin{subfigure}[t]{.4\textwidth}
        \centering
        \begin{tikzpicture}
            \EvenSqTwo{8}
        \end{tikzpicture}
        \caption{$S_{2,1}, \ N=8, \ g(15)=45$}
        \label{fig:esmod2}
    \end{subfigure}
\hspace{.6cm}
    \begin{subfigure}[t]{.5\textwidth}
    \centering
        \begin{tikzpicture}
            \EvenSqTwo{14}
        \end{tikzpicture}
        \caption{$S_{2,2}, \ N=14, \ g(55)=137$}
        \label{fig:esmod1}
    \end{subfigure}
\caption{The first two elements of the sequence $S_{2,k}$.}
\label{fig:S2}
\end{figure}

We construct $S_{2,k}$ with boundary $D_1$, and make the upper right hand corner equivalent to $S_2$ to connect the two disjoint boundary sections. To the right of this there are $2k-2$ vertical plus trees, with the roots alternating from the bottom to the top, and below the copy of $S_2$ there are $2k-2$ horizontal plus trees with two pluses each, with the roots alternating between the left boundary and the leftmost vertical plus tree from the bottom up.

The vertical plus trees, the 2 additional columns to the right boundary and one column separating them from $S_2$, which has width five, makes the total width $N=3(2k-2)+3+5=6k+2$, which is always 2 mod 3. There are the same number of horizontal plus trees as vertical ones, and the rest of spacing is the same, so the height is also $6k+2$. For $N\equiv 2 \text{ mod } 3$, the minimal total area for $t_{N^2}-1$ holes is $N^2-4$. So $S_{2,k}$ is efficiently structured by construction, and thus crystallized for all $k\ge 1$, with
\begin{equation}
\begin{split}
        h(S_{2,k})&=\frac{N(N-2)}{3}-1 = \frac{(6k+2)6k}{3}-1 =12k^2+4k-1, \\
        |S_{2,k}|&=(6k+2)^2-h(S_{2,k})-4 =24k^2+20k+1.
\end{split}
\end{equation}



\subsection{Even Squares, $N\equiv 0 \text{ mod } 3$}\label{evsq0}

\def\ra{.7}

\begin{figure}[H]
    \begin{subfigure}[t]{.4\textwidth}
        \centering
        \begin{tikzpicture}
            \EvenSqZero{12}
        \end{tikzpicture}
        \caption{$S_{0,1}, \ N=12, \ g(39)=101$}
    \end{subfigure}
\hspace{.6cm}
    \begin{subfigure}[t]{.5\textwidth}
    \centering
        \begin{tikzpicture}
            \EvenSqZero{18}
        \end{tikzpicture}
        \caption{$S_{0,2}, \ N=18, \ g(95)=225$}
    \end{subfigure}
    \caption{The first two elements of $S_{0,k}$.}
    \label{fig:S0}
\end{figure}
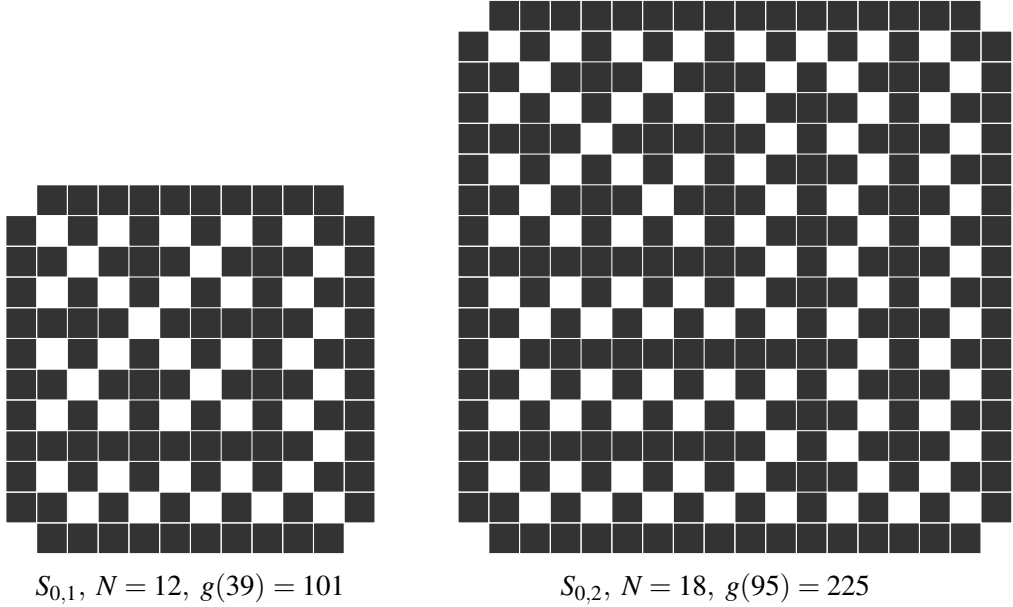

The construction for $N\equiv 0 \text{ mod } 3$ is the same as the previous case, but with $S_3$ instead of $S_2$ in the upper left corner. This increases the side length to $N=3(2k-2)+3+9=6k+6$, and the minimal total area for $t_{N^2}-1$ holes is $N^2-4$. Therefore by construction $S_{0,k}$ is an efficiently structured polyomino, and hence crystallized for all $k\ge1$, with

\begin{equation}
    \begin{split}
        h(S_{2,k})&=\frac{N(N-2)}{3}-1=\frac{(6k+6)(6k+4)}{3}-1=12k^2+20k+7,\\
        |S_{2,k}|&=(6k+6)^2-h(S_{2,k})-4=24k^2+52k+25.
    \end{split}
\end{equation}

\subsection{Pronic Rectangles, $N\equiv 0 \text{ mod } 3$}\label{prn0}

\def\ra{.7}

\begin{figure}[H]
    \centering
    \begin{subfigure}[t]{.2\textwidth}
        \centering
        \begin{tikzpicture}
            \coiledsnake{1}
        \end{tikzpicture}
    \caption{$R_{0,1}, \ N=6,$\\[2pt] $g(9)=30$}
    \end{subfigure}
\hspace{.3cm}
    \begin{subfigure}[t]{.3\textwidth}
    \centering
        \begin{tikzpicture}
            \coiledsnake{2}
        \end{tikzpicture}
    \caption{$R_{0,2}, \ N=9,$\\[2pt] $g(23)=64$}
    \end{subfigure}
\hspace{.3cm}
    \begin{subfigure}[t]{.4\textwidth}
    \centering
        \begin{tikzpicture}
            \coiledsnake{3}
        \end{tikzpicture}
    \caption{$R_{0,3}, \ N=12,$\\[2pt] $g(43)=110$}
    \end{subfigure}
\caption{The first three elements of $R_{0,k}$.}
\label{fig:R0}
\end{figure}
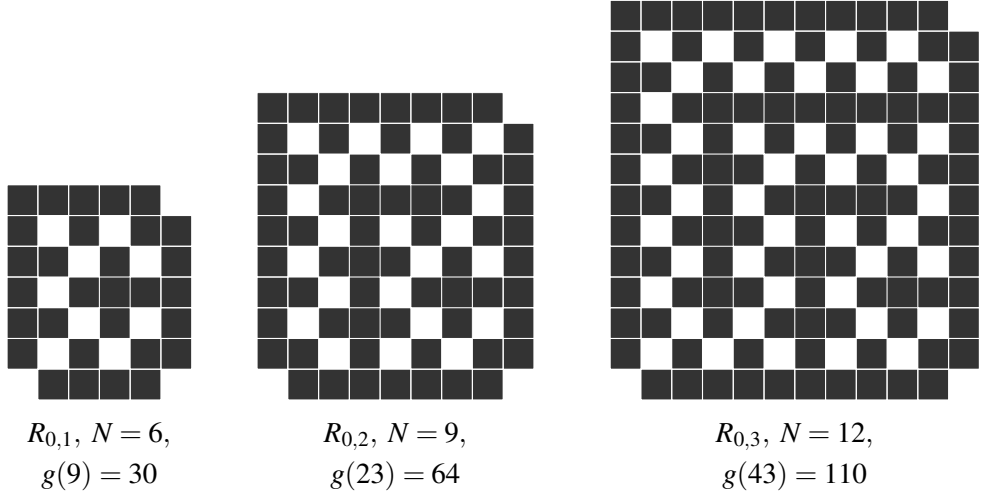

 Let $R_{0,k}$ have boundary $D_1$ with the top left corner filled. The interior is then filled with $k$ plus trees whose roots alternate between the right boundary and the bottom boundary. Right-rooted trees move left and then turn straight down, and bottom-rooted trees move up and then turn right, fitting tightly around the previous tree.

The height has $k$ plus trees and two additional rows on top and bottom, for a total of $3k+4$. The width is one less than the height, since the first tree is rooted on the side and so there are is only one column on the right which is not accounted for by the trees, instead of the usual two. Thus the bounding rectangle has dimensions $N\times(N+1)$ for $N=3k+3$, and by construction $R_{0,k}$ has efficient structure with total area $N(N+1)-3$. Therefore $R_{0,k}$ is crystallized for all $k\ge1$, with

\begin{equation}
    \begin{split}
        h(R_{0,k}) & = \frac{N(N-1)}{3}-1= \frac{(3k+3)(3k+2)}{3}-1 = 3k^2+5k+1,\\
        |R_{0,k}| & = N(N+1)-h(R_{0,k})-3= 6k^2+16k+8.
    \end{split}
\end{equation}


\subsection{Pronic Rectangles, $N\equiv 0 \text{ mod } 3$}\label{prn1}

\def\ra{.6}

\begin{figure}[H]
    \centering
    \begin{subfigure}[t]{.22\textwidth}
        \centering
        \begin{tikzpicture}
            \coiledsnakeTwo{1}
        \end{tikzpicture}
        \caption{$R_{1,1}, \ N=7,$\\[2pt] $g(13)=40$}
    \end{subfigure}
\hspace{.1cm}
    \begin{subfigure}[t]{.3\textwidth}
    \centering
        \begin{tikzpicture}
            \coiledsnakeTwo{2}
        \end{tikzpicture}
        \caption{$R_{1,2}, \ N=10,$\\[2pt] $g(29)=78$}
    \end{subfigure}
\hspace{.1cm}
    \begin{subfigure}[t]{.385\textwidth}
    \centering
        \begin{tikzpicture}
            \coiledsnakeTwo{3}
        \end{tikzpicture}
        \caption{$R_{2,3}, \ N=13,$\\[2pt] $g(51)=128$}
    \end{subfigure}
\caption{The first three elements of $R_{1,k}$.}
\label{fig:R1}
\end{figure}
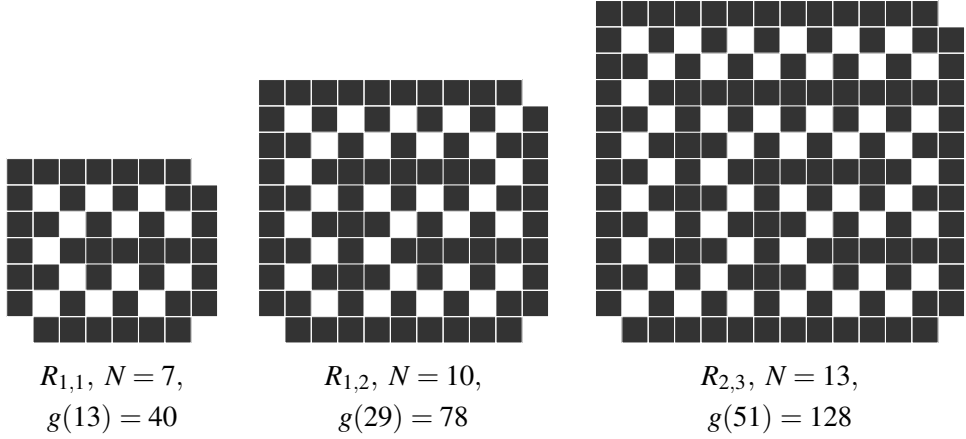

For $N\equiv 1 \text{ mod } 3$ we define $R_{1,k}$ to be the same as $R_{0,k}$, but with the initial tree having two pluses. In this case the width increases by two, and the height is the same, so its bounding rectangle is $N\times(N+1)$ for $N=3k+4$. The equation in terms of $N$ is the same as the above, and so $R_{1,k}$ is crystallized for all $k\ge1$, with

\begin{equation}
    \begin{split}
        h(R_{1,k}) & = \frac{N(N-1)}{3}-1= \frac{(3k+4)(3k+3)}{3}-1= 3k^2+7k+3,\\
        |R_{1,k}| & = N(N+1)-h(R_{1,k})-3= 6k^2+20k+14.
    \end{split}
\end{equation}

\subsection{Pronic Rectangles, $N\equiv 2 \text{ mod } 3$}\label{pr2}

\def\ra{.7}

\begin{figure}[H]
    \begin{subfigure}[t]{.4\textwidth}
        \centering
        \begin{tikzpicture}
            \twospirals{14}
        \end{tikzpicture}
        \caption{$R_{2,3}, \ N=14, \ g(59)=146$}
    \end{subfigure}
\hspace{1cm}
    \begin{subfigure}[t]{.4\textwidth}
    \centering
        \begin{tikzpicture}
            \twospirals{17}
        \end{tikzpicture}
        \caption{$R_{2,4}, \ N=17, \ g(89)=212$}
    \end{subfigure}
\caption{The third and fourth elements of the sequence $R_{2,k}$.}
\label{fig:R2}
\end{figure}
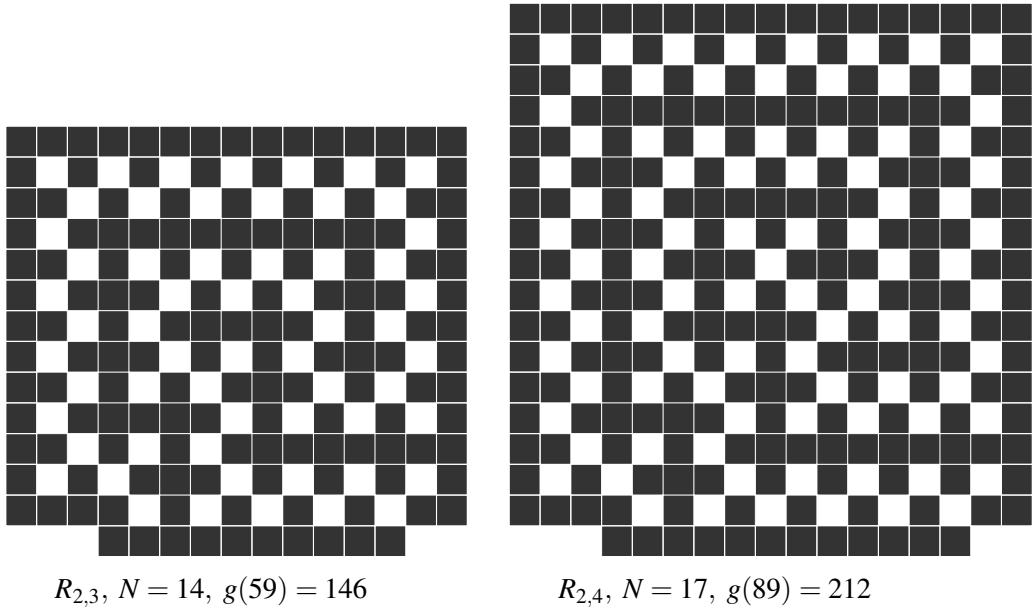

Finally we have the \emph{double spiral} construction for $N\equiv 2 \text{ mod } 3$. In this case, the minimal total area for $t_{N(N+1)}-1$ holes is $N(N+1)-5$. We orient $R_{2,k}$ so that the odd length is the width and the even length is the height, with all five empty spaces of the boundary in the bottom row; three from the left corner and two from the right corner. The interior is checkered and filled with two concentric plus tree spirals, one starting on the left of the bottom boundary and one on the bottom of the right boundary. They indent at the bottom left corner, and otherwise turn at right angles as depicted in Figure \ref{fig:R2}.

The $k$-th elelment of this sequence has $k$ horizontal sections of plus trees and $k$ vertical sections of plus trees, and the extra spaces depend on the parity of $k$. If $k$ is even, the horizontal trees can be counted with an extra two rows taken up by the bottom rooted plus, and thus with two additional rows at the top and only the boundary row on the bottom, the height is $3k+5$. And vertically it an extra plus moving left at the center giving two additional columns, with another two additional columns on each side. So the width is $3k+6$. If $k$ even, the right-rooted spiral can be counted among the $k$ trees, and there is an extra two spaces created by the plus moving down at the center. So the height is $3k+6$. And then horizontally there is a central hole indicating one extra column, with two additional columns on the sides, making the wideth $3k+5$.

So in all cases $N=3k+5\equiv 2 \text{ mod } 3$, and by construction $R_{2,k}$ is an efficiently structured crystal with total area $N(N+1)-5$, and with
\begin{equation}
    \begin{split}
        h(R_{2,k}) & = \frac{(N+1)(N-2)}{3}-1= \frac{(3k+6)(3k+3)}{3}-1 = 3k^2+9k+5,\\
        |R_{2,k}| & = N(N+1)-h(R_{2,k})-5= 6k^2+24k+20.
    \end{split}
\end{equation}

The first two of this sequence are degenerate with respect to the spiral effect and are depicted below.
\vspace{.5cm}
\begin{figure}[H]
    \centering
    \begin{subfigure}[t]{.4\textwidth}
        \centering
        \begin{tikzpicture}
		\spiralborder{8}{9}
		\wtopcheck{8}{9}

		\fill[black!80] (3*\ra,4*\ra) --++(0:\ra) --++(90:\ra) --++(180:\ra) -- cycle;
		\fill[black!80] (5*\ra,4*\ra) --++(0:\ra) --++(90:\ra) --++(180:\ra) -- cycle;
		\fill[black!80] (5*\ra,2*\ra) --++(0:\ra) --++(90:\ra) --++(180:\ra) -- cycle;

		\spiralborderLines{8}{9}
		\wtopcheckLines{8}{9}
        \end{tikzpicture}
    \caption{$R_{2,1}, \ N=8, \ g(17)=50$}
    \end{subfigure}
        \begin{subfigure}[t]{.4\textwidth}
        \centering
        \begin{tikzpicture}
		\twospirals{11}
        \end{tikzpicture}
    \caption{$R_{2,2}, \ N=11, \ g(35)=92$}
    \end{subfigure}
\caption{The first two elements of $R_{2,k}$.}
\end{figure}
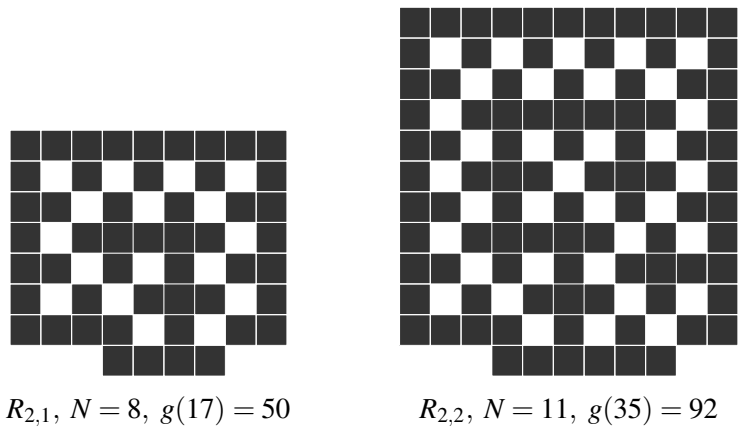

\section{Dismantling and the Proof of Theorem \ref{ghsolved}}\label{S:destroy}
Given a crstallized polyomino $A$ with $h_{\alpha}$ holes, we develop algorithms which remove at each step two tiles and one hole. By Lemma \ref{L:propertiesm}, this will produce constructions achieving $g(h)$ tiles and $h$ holes for all $h$ for which $\alpha$ is the minimum square or pronic number such that $h\le h_{\alpha}$. For $\alpha=N^2$ or $N(N+1)$, if we can remove at least $\lceil N/3\rceil$ holes from $A$ in this manner, then the total area of the remaining polyomino is at most $N(N-1)$ or $N^2$, respectively, and we say we have \emph{dismantled} $A$.  

Consider first the case that a polyomino has a plus tree rooted next to an indented corner, and the tree grows straight along the length of the side boundary. This subarrangement is especially prominent in $S_{1,k}$ in Figure \ref{fig:evens1}, and occurs in all constructions in Section \ref{constructions} except for the double spiral in \ref{pr2}. This can be dismantled by the following process depicted in Figure \ref{fig:dismantle}: (a) fill the lowest hole along the side boundary by pushing in the adjacent boundary tile; (b) remove the two boundary tiles adjacent to the indented corner; and (c) remove the indented corner and the boundary tile which is corner adjacent to it. Steps (a) and (b) combine to remove two tiles and one hole, and step (c) also removes two tiles and one hole, and additionally sets up a new indented corner so that the process can be repeated as long as the tree grows along that boundary. Both removals preserve the properties that the polyomino is acyclic and each hole has an area of one.

\vspace{.5cm}
\def\ra{.85}


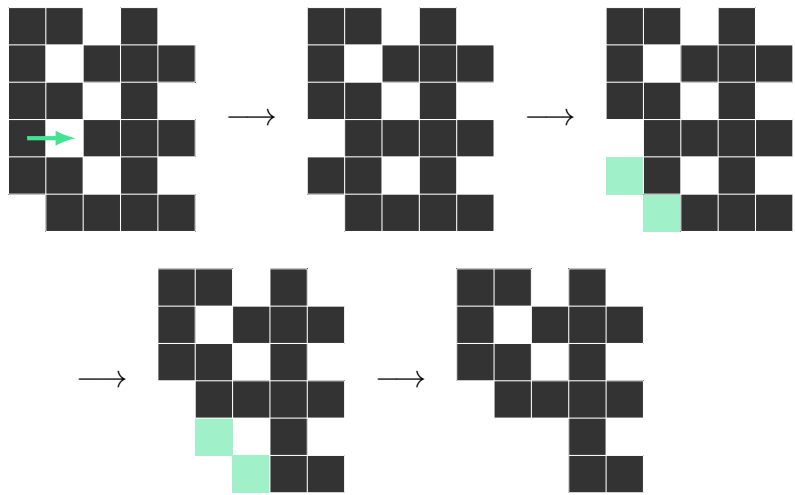
\begin{figure}[H]
\centering
\begin{subfigure}[t]{\textwidth}
\centering
\begin{tikzpicture}
	\foreach \i in {1,...,4} {
		\fill[black!80] (\i*\ra,0) --++(0:\ra) --++(90:\ra) --++(180:\ra) -- cycle;
	}
	
	\foreach \i in {1,...,5} {
		\fill[black!80] (0,\i*\ra) --++(0:\ra) --++(90:\ra) --++(180:\ra) -- cycle;
	}
	
	
	\foreach \i in {1,2} {
		\fill[black!80] (3*\ra,\i*2*\ra) --++(0:\ra) --++(90:\ra) --++(180:\ra) -- cycle;
		\foreach \j in {1,...,3} {
		\fill[black!80] (2*\i*\ra-\ra,\j*2*\ra-\ra) --++(0:\ra) --++(90:\ra) --++(180:\ra) -- cycle;
		}
		
		\foreach \j in {1,2} {
			\fill[black!80] (2*\i*\ra,2*\j*\ra) --++(0:\ra) --++(90:\ra) --++(180:\ra) -- cycle;
		}
	}


\draw[white] (0,\ra) -- (0,6*\ra);
\foreach \i in {1,...,4}{
	\draw[white] (\i*\ra,0) -- (\i*\ra,6*\ra);
}
\foreach \i in {1,...,5}{
	\draw[white] (0,\i*\ra) -- (5*\ra,\i*\ra);
}
\foreach \i in {1,...,3}{
	\draw[white] (5*\ra,2*\i*\ra-2*\ra) -- (5*\ra,2*\i*\ra-\ra);
}

\draw[white] (0,6*\ra) -- (2*\ra,6*\ra);
\draw[white] (3*\ra,6*\ra) -- (4*\ra,6*\ra);
\draw[white] (\ra,0) -- (5*\ra,0);
\draw[ultra thick, newcol, ->,>=latex] (.5*\ra,2.5*\ra) -- (1.8*\ra,2.5*\ra);
	
	\fill[black!80] (8*\ra+\ra,2*\ra) --++(0:\ra) --++(90:\ra) --++(180:\ra) -- cycle;

	\foreach \i in {1,...,4} {
		\fill[black!80] (8*\ra+\i*\ra,0) --++(0:\ra) --++(90:\ra) --++(180:\ra) -- cycle;
	}
	
	\foreach \i in {1,3,4,5} {
		\fill[black!80] (8*\ra,\i*\ra) --++(0:\ra) --++(90:\ra) --++(180:\ra) -- cycle;
	}
	
	
	\foreach \i in {1,2} {
		\fill[black!80] (8*\ra+3*\ra,\i*2*\ra) --++(0:\ra) --++(90:\ra) --++(180:\ra) -- cycle;
		\foreach \j in {1,...,3} {
		\fill[black!80] (8*\ra+2*\i*\ra-\ra,\j*2*\ra-\ra) --++(0:\ra) --++(90:\ra) --++(180:\ra) -- cycle;
		}
		
		\foreach \j in {1,2} {
			\fill[black!80] (8*\ra+2*\i*\ra,2*\j*\ra) --++(0:\ra) --++(90:\ra) --++(180:\ra) -- cycle;
		}
	}
	

\draw[white] (8*\ra,\ra) -- (8*\ra,2*\ra);
\draw[white] (8*\ra,3*\ra) -- (8*\ra,6*\ra);
\foreach \i in {1,...,4}{
	\draw[white] (8*\ra+\i*\ra,0) -- (8*\ra+\i*\ra,6*\ra);
}
\foreach \i in {1,...,5}{
	\draw[white] (8*\ra,\i*\ra) -- (8*\ra+5*\ra,\i*\ra);
}
\foreach \i in {1,...,3}{
	\draw[white] (8*\ra+5*\ra,2*\i*\ra-2*\ra) -- (8*\ra+5*\ra,2*\i*\ra-\ra);
}

\draw[white] (8*\ra,6*\ra) -- (8*\ra+2*\ra,6*\ra);
\draw[white] (8*\ra+3*\ra,6*\ra) -- (8*\ra+4*\ra,6*\ra);
\draw[white] (8*\ra+\ra,0) -- (8*\ra+5*\ra,0);
	

	\fill[black!80] (16*\ra+\ra,2*\ra) --++(0:\ra) --++(90:\ra) --++(180:\ra) -- cycle;
	
	\fill[newcol!50] (16*\ra+\ra,0) --++(0:\ra) --++(90:\ra) --++(180:\ra) -- cycle;
	\fill[newcol!50] (16*\ra,\ra) --++(0:\ra) --++(90:\ra) --++(180:\ra) -- cycle;

	\foreach \i in {2,3,4} {
		\fill[black!80] (16*\ra+\i*\ra,0) --++(0:\ra) --++(90:\ra) --++(180:\ra) -- cycle;
	}
	
	\foreach \i in {3,4,5} {
		\fill[black!80] (16*\ra,\i*\ra) --++(0:\ra) --++(90:\ra) --++(180:\ra) -- cycle;
	}
	
	
	\foreach \i in {1,2} {
		\fill[black!80] (16*\ra+3*\ra,\i*2*\ra) --++(0:\ra) --++(90:\ra) --++(180:\ra) -- cycle;
		\foreach \j in {1,...,3} {
		\fill[black!80] (16*\ra+2*\i*\ra-\ra,\j*2*\ra-\ra) --++(0:\ra) --++(90:\ra) --++(180:\ra) -- cycle;
		}
		
		\foreach \j in {1,2} {
			\fill[black!80] (16*\ra+2*\i*\ra,2*\j*\ra) --++(0:\ra) --++(90:\ra) --++(180:\ra) -- cycle;
		}
	}
	

\draw[white] (16*\ra,3*\ra) -- (16*\ra,6*\ra);
\draw[white] (16*\ra+\ra,\ra) -- (16*\ra+\ra,6*\ra);
\foreach \i in {2,3,4}{
	\draw[white] (16*\ra+\i*\ra,0) -- (16*\ra+\i*\ra,6*\ra);
}
\foreach \i in {1,2}{
	\draw[white] (16*\ra+\ra,\i*\ra) -- (16*\ra+5*\ra,\i*\ra);
}
\foreach \i in {3,4,5}{
	\draw[white] (16*\ra,\i*\ra) -- (16*\ra+5*\ra,\i*\ra);
}
\foreach \i in {1,...,3}{
	\draw[white] (16*\ra+5*\ra,2*\i*\ra-2*\ra) -- (16*\ra+5*\ra,2*\i*\ra-\ra);
}

\draw[white] (16*\ra,6*\ra) -- (16*\ra+2*\ra,6*\ra);
\draw[white] (16*\ra+3*\ra,6*\ra) -- (16*\ra+4*\ra,6*\ra);
\draw[white] (16*\ra+2*\ra,0) -- (16*\ra+5*\ra,0);
	

	\fill[black!80] (4*\ra+\ra,2*\ra-7*\ra) --++(0:\ra) --++(90:\ra) --++(180:\ra) -- cycle;
	
	\fill[newcol!50] (4*\ra+2*\ra,-7*\ra) --++(0:\ra) --++(90:\ra) --++(180:\ra) -- cycle;
	\fill[newcol!50] (4*\ra+\ra,\ra-7*\ra) --++(0:\ra) --++(90:\ra) --++(180:\ra) -- cycle;

	\foreach \i in {3,4} {
		\fill[black!80] (4*\ra+\i*\ra,-7*\ra) --++(0:\ra) --++(90:\ra) --++(180:\ra) -- cycle;
	}
	
	\foreach \i in {3,4,5} {
		\fill[black!80] (4*\ra,\i*\ra-7*\ra) --++(0:\ra) --++(90:\ra) --++(180:\ra) -- cycle;
	}
	
	\fill[black!80] (4*\ra+3*\ra,\ra-7*\ra) --++(0:\ra) --++(90:\ra) --++(180:\ra) -- cycle;
	\foreach \i in {1,2} {
		\fill[black!80] (4*\ra+3*\ra,\i*2*\ra-7*\ra) --++(0:\ra) --++(90:\ra) --++(180:\ra) -- cycle;
		\foreach \j in {2,3} {
		\fill[black!80] (4*\ra+2*\i*\ra-\ra,\j*2*\ra-\ra-7*\ra) --++(0:\ra) --++(90:\ra) --++(180:\ra) -- cycle;
		}
		
		\foreach \j in {1,2} {
			\fill[black!80] (4*\ra+2*\i*\ra,2*\j*\ra-7*\ra) --++(0:\ra) --++(90:\ra) --++(180:\ra) -- cycle;
		}
	}
	

\draw[white] (4*\ra,3*\ra-7*\ra) -- (4*\ra,6*\ra-7*\ra);
\draw[white] (4*\ra+\ra,2*\ra-7*\ra) -- (4*\ra+5*\ra,2*\ra-7*\ra);
\foreach \i in {1,2}{
	\draw[white] (4*\ra+\i*\ra,2*\ra-7*\ra) -- (4*\ra+\i*\ra,6*\ra-7*\ra);
}

\foreach \i in {3,4}{
	\draw[white] (4*\ra+\i*\ra,-7*\ra) -- (4*\ra+\i*\ra,6*\ra-7*\ra);
}
\foreach \i in {1,2}{
	\draw[white] (4*\ra+3*\ra,\i*\ra-\ra-7*\ra) -- (4*\ra+5*\ra,\i*\ra-\ra-7*\ra);
}

\foreach \i in {3,4,5}{
	\draw[white] (4*\ra,\i*\ra-7*\ra) -- (4*\ra+5*\ra,\i*\ra-7*\ra);
}
\foreach \i in {1,...,3}{
	\draw[white] (4*\ra+5*\ra,2*\i*\ra-2*\ra-7*\ra) -- (4*\ra+5*\ra,2*\i*\ra-\ra-7*\ra);
}

\draw[white] (4*\ra,6*\ra-7*\ra) -- (4*\ra+2*\ra,6*\ra-7*\ra);
\draw[white] (4*\ra+3*\ra,6*\ra-7*\ra) -- (4*\ra+4*\ra,6*\ra-7*\ra);


	\fill[black!80] (12*\ra+\ra,2*\ra-7*\ra) --++(0:\ra) --++(90:\ra) --++(180:\ra) -- cycle;

	\foreach \i in {3,4} {
		\fill[black!80] (12*\ra+\i*\ra,-7*\ra) --++(0:\ra) --++(90:\ra) --++(180:\ra) -- cycle;
	}
	
	\foreach \i in {3,4,5} {
		\fill[black!80] (12*\ra,\i*\ra-7*\ra) --++(0:\ra) --++(90:\ra) --++(180:\ra) -- cycle;
	}
	
	\fill[black!80] (12*\ra+3*\ra,\ra-7*\ra) --++(0:\ra) --++(90:\ra) --++(180:\ra) -- cycle;
	\foreach \i in {1,2} {
		\fill[black!80] (12*\ra+3*\ra,\i*2*\ra-7*\ra) --++(0:\ra) --++(90:\ra) --++(180:\ra) -- cycle;
		\foreach \j in {2,3} {
		\fill[black!80] (12*\ra+2*\i*\ra-\ra,\j*2*\ra-\ra-7*\ra) --++(0:\ra) --++(90:\ra) --++(180:\ra) -- cycle;
		}
		
		\foreach \j in {1,2} {
			\fill[black!80] (12*\ra+2*\i*\ra,2*\j*\ra-7*\ra) --++(0:\ra) --++(90:\ra) --++(180:\ra) -- cycle;
		}
	}
	

\draw[white] (12*\ra,3*\ra-7*\ra) -- (12*\ra,6*\ra-7*\ra);
\draw[white] (12*\ra+\ra,2*\ra-7*\ra) -- (12*\ra+5*\ra,2*\ra-7*\ra);
\foreach \i in {1,2}{
	\draw[white] (12*\ra+\i*\ra,2*\ra-7*\ra) -- (12*\ra+\i*\ra,6*\ra-7*\ra);
}

\foreach \i in {3,4}{
	\draw[white] (12*\ra+\i*\ra,-7*\ra) -- (12*\ra+\i*\ra,6*\ra-7*\ra);
}
\foreach \i in {1,2}{
	\draw[white] (12*\ra+3*\ra,\i*\ra-\ra-7*\ra) -- (12*\ra+5*\ra,\i*\ra-\ra-7*\ra);
}

\foreach \i in {3,4,5}{
	\draw[white] (12*\ra,\i*\ra-7*\ra) -- (12*\ra+5*\ra,\i*\ra-7*\ra);
}
\foreach \i in {1,...,3}{
	\draw[white] (12*\ra+5*\ra,2*\i*\ra-2*\ra-7*\ra) -- (12*\ra+5*\ra,2*\i*\ra-\ra-7*\ra);
}

\draw[white] (12*\ra,6*\ra-7*\ra) -- (12*\ra+2*\ra,6*\ra-7*\ra);
\draw[white] (12*\ra+3*\ra,6*\ra-7*\ra) -- (12*\ra+4*\ra,6*\ra-7*\ra);

\foreach \i in {0,1}{
	\node at (6.5*\ra+8*\i*\ra,3*\ra) {$\longrightarrow$};
}
\foreach \i in {0,1}{
	\node at (2.5*\ra+8*\i*\ra,3*\ra-7*\ra) {$\longrightarrow$};
}
\end{tikzpicture}
\end{subfigure}
\caption{Iterative process for removing two tiles and one hole.}
\label{fig:dismantle}
\end{figure}

In the pronic mod 2 case there is not always a long rooted tree, but the tree along the top can be rooted so that this process can be implemented. To do so, as in Figure \ref{fig:rootit}: (a) move the top corner tile to the adjacent interior corner; (b) remove the two tiles that were adjacent to the corner tile; (c) push the boundary tile on the bottom row up to root the tree; (d) remove the two tiles which had been adjacent to the tile that was pushed up.


\def\ra{.85}

\begin{figure}[H]
\centering
    \begin{subfigure}[t]{.3\textwidth}
        \centering
        \resizebox{!}{\textwidth}{
            \begin{tikzpicture}
                \twospirals{11}
                \draw[ultra thick, newcol, ->, >=latex] (0.5*\ra,11.5*\ra) -- (1.6*\ra,10.5*\ra);
                \node at (12.5*\ra,6*\ra) {$\longrightarrow$};
            \end{tikzpicture}
        }
    \end{subfigure}
\hspace{.3cm}
    \begin{subfigure}[t]{.3\textwidth}
        \centering
        \resizebox{!}{\textwidth}{
            \begin{tikzpicture}
                \fill[black!80] (\ra,10*\ra) --++(0:\ra) --++(90:\ra) --++(180:\ra) -- cycle;

                \foreach \i in {1,...,9}{
                    \fill[black!80] (0,\i*\ra) --++(0:\ra) --++(90:\ra) --++(180:\ra) -- cycle;
                }
                \foreach \i in {2,...,10}{
                    \fill[black!80] (\i*\ra,11*\ra) --++(0:\ra) --++(90:\ra) --++(180:\ra) -- cycle;
                }
                \foreach \i in {1,...,10}{
                    \fill[black!80] (10*\ra,\i*\ra) --++(0:\ra) --++(90:\ra) --++(180:\ra) -- cycle;
                }
                \foreach \i in {3,...,8}{
                    \fill[black!80] (\i*\ra,0) --++(0:\ra) --++(90:\ra) --++(180:\ra) -- cycle;
                }
                \fill[black!80] (2*\ra,\ra) --++(0:\ra) --++(90:\ra) --++(180:\ra) -- cycle;

                \twospiralsNoB{11}
                
                \foreach \i in {1,...,9}{
                \draw[white] (0,\i*\ra) --++(0:\ra) --++(90:\ra) --++(180:\ra) --++(270:\ra);
                }
                \foreach \i in {2,...,10}{
                    \draw[white] (\i*\ra,11*\ra) --++(0:\ra) --++(90:\ra) --++(180:\ra) --++(270:\ra);
                }
                \foreach \i in {1,...,10}{
                    \draw[white] (10*\ra,\i*\ra) --++(0:\ra) --++(90:\ra) --++(180:\ra) --++(270:\ra);
                }
                \foreach \i in {3,...,8}{
                    \draw[white] (\i*\ra,0) --++(0:\ra) --++(90:\ra) --++(180:\ra) --++(270:\ra);
                }
                \draw[white] (2*\ra,\ra) --++(0:\ra) --++(90:\ra) --++(180:\ra) --++(270:\ra);

                \fill[newcol!40] (0,10*\ra) --++(0:\ra) --++(90:\ra) --++(180:\ra) -- cycle;
                \fill[newcol!40] (\ra,11*\ra) --++(0:\ra) --++(90:\ra) --++(180:\ra) -- cycle;

                \draw[ultra thick, newcol, ->, >=latex] (0.5*\ra,8.5*\ra) -- (1.6*\ra,8.5*\ra);

                \node at (12.5*\ra,6*\ra) {$\longrightarrow$};
            \end{tikzpicture}
        }
    \end{subfigure}
\hspace{.3cm}
    \begin{subfigure}[t]{.3\textwidth}
        \centering
        \resizebox{!}{\textwidth}{
            \begin{tikzpicture}
                \fill[black!80] (\ra,10*\ra) --++(0:\ra) --++(90:\ra) --++(180:\ra) -- cycle;

                \foreach \i in {1,...,6}{
                    \fill[black!80] (0,\i*\ra) --++(0:\ra) --++(90:\ra) --++(180:\ra) -- cycle;
                }
                \foreach \i in {2,...,10}{
                    \fill[black!80] (\i*\ra,11*\ra) --++(0:\ra) --++(90:\ra) --++(180:\ra) -- cycle;
                }
                \foreach \i in {1,...,10}{
                    \fill[black!80] (10*\ra,\i*\ra) --++(0:\ra) --++(90:\ra) --++(180:\ra) -- cycle;
                }
                \foreach \i in {3,...,8}{
                    \fill[black!80] (\i*\ra,0) --++(0:\ra) --++(90:\ra) --++(180:\ra) -- cycle;
                }
                \fill[black!80] (2*\ra,\ra) --++(0:\ra) --++(90:\ra) --++(180:\ra) -- cycle;

                \fill[black!80] (\ra,8*\ra) --++(0:\ra) --++(90:\ra) --++(180:\ra) -- cycle;

                \twospiralsNoB{11}

                \foreach \i in {1,...,9}{
                \draw[white] (0,\i*\ra) --++(0:\ra) --++(90:\ra) --++(180:\ra) --++(270:\ra);
                }
                \foreach \i in {2,...,10}{
                    \draw[white] (\i*\ra,11*\ra) --++(0:\ra) --++(90:\ra) --++(180:\ra) --++(270:\ra);
                }
                \foreach \i in {1,...,10}{
                    \draw[white] (10*\ra,\i*\ra) --++(0:\ra) --++(90:\ra) --++(180:\ra) --++(270:\ra);
                }
                \foreach \i in {3,...,8}{
                    \draw[white] (\i*\ra,0) --++(0:\ra) --++(90:\ra) --++(180:\ra) --++(270:\ra);
                }
                \draw[white] (2*\ra,\ra) --++(0:\ra) --++(90:\ra) --++(180:\ra) --++(270:\ra);

                \foreach \i in {7,9}{
                	\fill[newcol!40] (0,\i*\ra) --++(0:\ra) --++(90:\ra) --++(180:\ra) -- cycle;
                }

            \end{tikzpicture}
        }
    \end{subfigure}
    \caption{Process for rooting a tree in the double spiral construction.}
    \label{fig:rootit}
\end{figure}
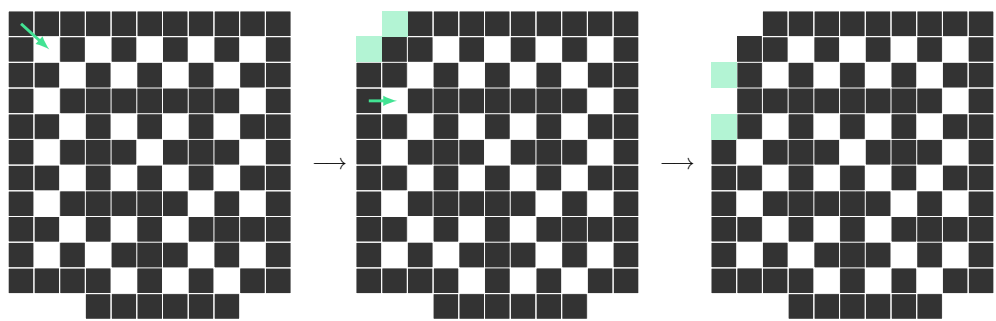

Dismantling odd squares becomes increasingly difficult upon subsequent expansions. There will always be holes in all four corners by parity, and these holes are always easily removed via methods used in the previous two examples. If the outer corner space is filled, we can apply the first removal from Figure \ref{fig:rootit}, and if it is empty we can apply the  second removal from Figure \ref{fig:dismantle}.

Removing the corners subtracts 12 total spaces, which suffices to dismantle up to an $11\times 11$ square. For larger squares, after the corners are dealt with we remove the next hole via a complete rearrangement which is a modified version of the double spiral construction, with unique central configurations depending on $N \text{ mod } 3$, as depicted in Figure \ref{fig:rearrange}.


\def\ra{.45}

\begin{figure}[H]
\centering
    \begin{subfigure}[t]{.27\textwidth}
        \centering
            \begin{tikzpicture}
                \rearrangeOne{13}
            \end{tikzpicture}
    \end{subfigure}
\hspace{.3cm}
    \begin{subfigure}[t]{.3\textwidth}
        \centering
            \begin{tikzpicture}
                \rearrangeZero{15}
            \end{tikzpicture}
    \end{subfigure}
\hspace{.3cm}
    \begin{subfigure}[t]{.32\textwidth}
        \centering
            \begin{tikzpicture}
                \rearrangeTwo{17}
            \end{tikzpicture}
    \end{subfigure}
    \caption{Rearrangements for $N=$ 13, 15, and 17.}
    \label{fig:rearrange}
\end{figure}
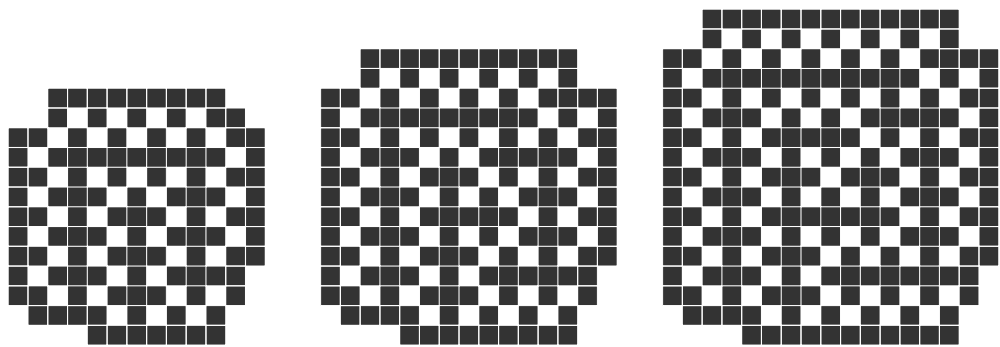

These arrangements generalize straightforwardly for larger $N$ by expanding the boundary and extending each spiral in two directions. For example, the 1 mod 3 construction on the left in Figure \ref{fig:rearrange} extends to $N=19$ by taking the spiral that stops at the bottom right and extending it left and then up, and the spiral that is pointing up extends right and then down. This adds six to both the width and height, and thus preserves $N \text{ mod } 3$.

The $1 \text{ mod 3}$ construction has 18 empty spaces around the boundary, and the other two have 19 empty spaces. Recall that for $N\equiv 1 \text{ mod }3$, the construction for $h_{N^2}$ holes has three empty corners, and the constructions for $N\not\equiv 1 \text{ mod 3}$ both have four empty corners. Then another 12 spaces are removed by dismantling the four corner holes, and this rearrangement removes one additional hole, giving the above numbers. 

The only exception is the sequence $S_l$, which starts with a single empty corner and requires one extra removal. In removing the four corners from $S_l$, three become indented as in Figure \ref{fig:rootit}. Then we get the additional removal by moving one these three indented corners to the center and removing the other two. Observe that these rearrangements preserve the acyclic structure of the polyomino and the property that all holes have area one.

\def\ra{.85}
\begin{figure}[H]
\centering
    \begin{subfigure}[t]{.27\textwidth}
        \centering
        \resizebox{!}{\textwidth}{
            \begin{tikzpicture}
                \fullborder{9}{9}
                \wtopcheck{9}{9}
                \foreach \i in {0,1}{
                    \fill[black!80] (4*\ra,4*\i*\ra+2*\ra) --++(0:\ra) --++(90:\ra) --++(180:\ra) -- cycle;
                    \fill[black!80] (4*\i*\ra+2*\ra,4*\ra) --++(0:\ra) --++(90:\ra) --++(180:\ra) -- cycle;
                }
                \fullborderLines{9}{9}
                \wtopcheckLines{9}{9}
                \draw[ultra thick, newcol, ->, >=latex] (0.5*\ra,8.5*\ra) -- (1.6*\ra,7.4*\ra);
                \draw[ultra thick, newcol, ->, >=latex] (0.5*\ra,0.5*\ra) -- (1.6*\ra,1.6*\ra);
                \draw[ultra thick, newcol, ->, >=latex] (8.5*\ra,8.5*\ra) -- (7.4*\ra,7.4*\ra);
                
                \node at (10.3*\ra,4.5*\ra) {$\longrightarrow$};
            \end{tikzpicture}
        }
    \end{subfigure}
\hspace{.8cm}
    \begin{subfigure}[t]{.27\textwidth}
        \centering
        \resizebox{!}{\textwidth}{
           \begin{tikzpicture}
                \foreach \i in {2,...,6}{
                    \fill[black!80] (0,\i*\ra) --++(0:\ra) --++(90:\ra) --++(180:\ra) -- cycle;
                    \fill[black!80] (\i*\ra,0) --++(0:\ra) --++(90:\ra) --++(180:\ra) -- cycle;
                    \fill[black!80] (8*\ra,\i*\ra) --++(0:\ra) --++(90:\ra) --++(180:\ra) -- cycle;
                    \fill[black!80] (\i*\ra,8*\ra) --++(0:\ra) --++(90:\ra) --++(180:\ra) -- cycle;
                }
                \wtopcheck{9}{9}
                \foreach \i in {0,1}{
                    \fill[black!80] (4*\ra,4*\i*\ra+2*\ra) --++(0:\ra) --++(90:\ra) --++(180:\ra) -- cycle;
                    \fill[black!80] (4*\i*\ra+2*\ra,4*\ra) --++(0:\ra) --++(90:\ra) --++(180:\ra) -- cycle;
                }
                \fill[black!80] (\ra,\ra) --++(0:\ra) --++(90:\ra) --++(180:\ra) -- cycle;
                \fill[black!80] (\ra,7*\ra) --++(0:\ra) --++(90:\ra) --++(180:\ra) -- cycle;
                \fill[black!80] (7*\ra,7*\ra) --++(0:\ra) --++(90:\ra) --++(180:\ra) -- cycle;

                \foreach \i in {0,1}{
                    \draw[white] (4*\ra,4*\i*\ra+2*\ra) --++(0:\ra) --++(90:\ra) --++(180:\ra) --++(270:\ra);
                    \draw[white] (4*\i*\ra+2*\ra,4*\ra) --++(0:\ra) --++(90:\ra) --++(180:\ra) --++(270:\ra);
                }
                \foreach \i in {2,...,6}{
                    \draw[white] (0,\i*\ra) --++(0:\ra) --++(90:\ra) --++(180:\ra) --++(270:\ra);
                    \draw[white] (\i*\ra,0) --++(0:\ra) --++(90:\ra) --++(180:\ra) --++(270:\ra);
                    \draw[white] (8*\ra,\i*\ra) --++(0:\ra) --++(90:\ra) --++(180:\ra) --++(270:\ra);
                    \draw[white] (\i*\ra,8*\ra) --++(0:\ra) --++(90:\ra) --++(180:\ra) --++(270:\ra);
                }
                \draw[white] (\ra,\ra) --++(0:\ra) --++(90:\ra) --++(180:\ra) --++(270:\ra);
                \draw[white] (\ra,7*\ra) --++(0:\ra) --++(90:\ra) --++(180:\ra) --++(270:\ra);
                \draw[white] (7*\ra,7*\ra) --++(0:\ra) --++(90:\ra) --++(180:\ra) --++(270:\ra);

                \wtopcheckLines{9}{9}

                \foreach \i in {1,7}{
                    \fill[newcol!40] (\i*\ra,0) --++(0:\ra) --++(90:\ra) --++(180:\ra) -- cycle;
                    \fill[newcol!40] (0,\i*\ra) --++(0:\ra) --++(90:\ra) --++(180:\ra) -- cycle;
                    \fill[newcol!40] (\i*\ra,8*\ra) --++(0:\ra) --++(90:\ra) --++(180:\ra) -- cycle;
                    \fill[newcol!40] (8*\ra,\i*\ra) --++(0:\ra) --++(90:\ra) --++(180:\ra) -- cycle;
                }
            
                \draw[ultra thick, newcol, ->, >=latex] (1.5*\ra,1.5*\ra) -- (4.5*\ra,4.5*\ra);
                
                \node at (10.3*\ra,4.5*\ra) {$\longrightarrow$};

            \end{tikzpicture}
        }
    \end{subfigure}
\hspace{.8cm}
    \begin{subfigure}[t]{.27\textwidth}
        \centering
        \resizebox{!}{\textwidth}{
            \begin{tikzpicture}
                \foreach \i in {2,...,6}{
                    \fill[black!80] (0,\i*\ra) --++(0:\ra) --++(90:\ra) --++(180:\ra) -- cycle;
                    \fill[black!80] (\i*\ra,0) --++(0:\ra) --++(90:\ra) --++(180:\ra) -- cycle;
                    \fill[black!80] (8*\ra,\i*\ra) --++(0:\ra) --++(90:\ra) --++(180:\ra) -- cycle;
                    \fill[black!80] (\i*\ra,8*\ra) --++(0:\ra) --++(90:\ra) --++(180:\ra) -- cycle;
                }
                \wtopcheck{9}{9}
                \foreach \i in {0,1}{
                    \fill[black!80] (4*\ra,4*\i*\ra+2*\ra) --++(0:\ra) --++(90:\ra) --++(180:\ra) -- cycle;
                    \fill[black!80] (4*\i*\ra+2*\ra,4*\ra) --++(0:\ra) --++(90:\ra) --++(180:\ra) -- cycle;
                }

                \fill[black!80] (4*\ra,4*\ra) --++(0:\ra) --++(90:\ra) --++(180:\ra) -- cycle;

                \foreach \i in {0,1}{
                    \draw[white] (4*\ra,4*\i*\ra+2*\ra) --++(0:\ra) --++(90:\ra) --++(180:\ra) --++(270:\ra);
                    \draw[white] (4*\i*\ra+2*\ra,4*\ra) --++(0:\ra) --++(90:\ra) --++(180:\ra) --++(270:\ra);
                }
                \foreach \i in {2,...,6}{
                    \draw[white] (0,\i*\ra) --++(0:\ra) --++(90:\ra) --++(180:\ra) --++(270:\ra);
                    \draw[white] (\i*\ra,0) --++(0:\ra) --++(90:\ra) --++(180:\ra) --++(270:\ra);
                    \draw[white] (8*\ra,\i*\ra) --++(0:\ra) --++(90:\ra) --++(180:\ra) --++(270:\ra);
                    \draw[white] (\i*\ra,8*\ra) --++(0:\ra) --++(90:\ra) --++(180:\ra) --++(270:\ra);
                }
                \draw[white] (\ra,\ra) --++(0:\ra) --++(90:\ra) --++(180:\ra) --++(270:\ra);
                \draw[white] (\ra,7*\ra) --++(0:\ra) --++(90:\ra) --++(180:\ra) --++(270:\ra);
                \draw[white] (7*\ra,7*\ra) --++(0:\ra) --++(90:\ra) --++(180:\ra) --++(270:\ra);

                \wtopcheckLines{9}{9}

                \fill[newcol!40] (\ra,7*\ra) --++(0:\ra) --++(90:\ra) --++(180:\ra) -- cycle;
                \fill[newcol!40] (7*\ra,7*\ra) --++(0:\ra) --++(90:\ra) --++(180:\ra) -- cycle;

            \end{tikzpicture}
        }
    \end{subfigure}
        \caption{General method to remove five holes from $S_l$.}
        \label{fig:KRdismantle}
\end{figure}
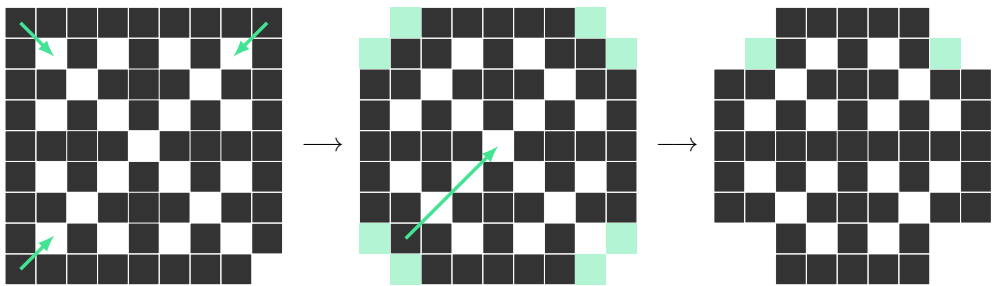

\begin{proof}[\textbf{Proof of Theorem \ref{ghsolved}}]
As noted above, all constructions in Section \ref{constructions} either have a rooted plus tree growing next to an indented corner along the full side of the polyomino, or have a plus tree which can be rooted as shown in Figure \ref{fig:rootit}, and any crystallized polyomino with $h_{\alpha}$ holes for $\alpha$ an odd square which is constructed via expansion can be rearranged after some initial removals to have a rooted plus tree growing along a full side of the polyomino. Then, implementing the removal process for holes along a rooted tree clearly suffices to dismantle all of these crystallized polyominoes, as they remove at least $N-4$ holes, far more than the roughly $N/3$ necessary.

This implies that $g(h)\le g(h_{\alpha})-2(h_{\alpha}-h)$ for any $h$ such that $\alpha$ is the minimum square or pronic number with $h \le h_{\alpha}$. Let $\alpha' < \alpha$ be the next largest square or pronic number. Then since $m(h_{\alpha})=g(h_{\alpha})$, Lemma \ref{L:propertiesm} ensures that $g(h_{\alpha})-2(h_{\alpha}-h)=m(h)$ for all $h$ such that $h+m(h)\ge\alpha'$. When $\alpha'=(2^l+1)^2$, we have that $$(h_{\alpha'}+1)+m(h_{\alpha'}+1) > \alpha,'$$ and otherwise $h_{\alpha'}+1=t_{\alpha'}$. Then $m(t_{\alpha'}+1)=m(t_{\alpha'})+3$ by Lemma \ref{L:propertiesm}, and by Lemmas \ref{nosqs}, \ref{nodiv2}, and \ref{L:oddintobst} we have that $$g(t_{\alpha'})=m(t_{\alpha'})+1=m(t_{\alpha'}+1)-2.$$
Therefore $g(h_{\alpha})-2(h_{\alpha}-h)=g(h)$ for all $h$ such that $\alpha$ is the minimum square or pronic number such that $h\le h_{\alpha}$.
\end{proof}

Observe that these removals will quickly disrupt the property of having minimal outer perimeter. In fact, none of the constructions in Figure \ref{fig:rearrange} are crystallized, since in all three cases $N^2-18$ is already less than $N(N-1)$. 

The jumps from $h_{\alpha}$ to $h_{\alpha}+1$ are the only jumps of three for $g(h)$ once $h > 6$. We also demonstrate a constructive procedure to show that three tiles is always sufficient to produce an additional hole when $h > 6$. Whenever there is a plus rooted to a border, the local transformation in Figure \ref{fig:plus3} maintains all routes of connectivity in the polyomino and does not create any cycles.
\def\ra{.85}


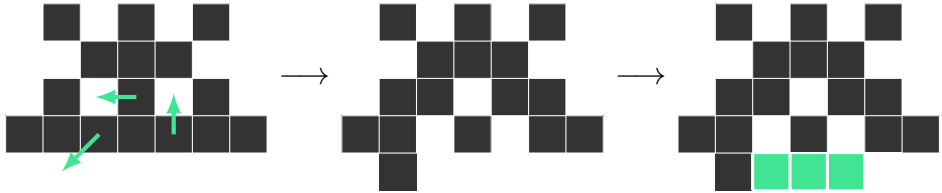
\begin{figure}[H]
\centering

\begin{tikzpicture}
	\foreach \i in {0,...,6} {
		\fill[black!80] (\i*\ra,0) --++(0:\ra) --++(90:\ra) --++(180:\ra) -- cycle;
	}
	
	\foreach \i in {1,...,3} {
		\foreach \j in {1,2}{
			\fill[black!80] (2*\i*\ra-\ra,2*\j*\ra-\ra) --++(0:\ra) --++(90:\ra) --++(180:\ra) -- cycle;
		}
	}
	
	\foreach \i in {1,2}{
			\fill[black!80] (2*\i*\ra,2*\ra) --++(0:\ra) --++(90:\ra) --++(180:\ra) -- cycle;
		}
	
	\fill[black!80] (3*\ra,2*\ra) --++(0:\ra) --++(90:\ra) --++(180:\ra) -- cycle;

\foreach \i in {0,1}{
	\draw[white] (0,\i*\ra) -- (7*\ra,\i*\ra);
}
\foreach \i in {1,...,6}{
	\draw[white] (\i*\ra,0) -- (\i*\ra,4*\ra);
}
\foreach \i in {2,3,4}{
	\draw[white] (\ra,\i*\ra) -- (6*\ra,\i*\ra);
}

\foreach \i in {0,7}{
	\draw[white] (\i*\ra,0) -- (\i*\ra,\ra);
}

\draw[ultra thick, newcol, ->, >=latex] (4.5*\ra,0.5*\ra) -- (4.5*\ra,1.6*\ra);
\draw[ultra thick, newcol, ->, >=latex] (3.5*\ra,1.5*\ra) -- (2.4*\ra,1.5*\ra);
\draw[ultra thick, newcol, ->, >=latex] (2.5*\ra,0.5*\ra) -- (1.5*\ra,-0.5*\ra);
	

	\foreach \i in {0,1,3,5,6} {
		\fill[black!80] (9*\ra+\i*\ra,0) --++(0:\ra) --++(90:\ra) --++(180:\ra) -- cycle;
	}
	
	\foreach \i in {1,2,4,5} {
		\fill[black!80] (9*\ra+\i*\ra,\ra) --++(0:\ra) --++(90:\ra) --++(180:\ra) -- cycle;
	}
	
	\foreach \i in {2,3,4} {
		\fill[black!80] (9*\ra+\i*\ra,2*\ra) --++(0:\ra) --++(90:\ra) --++(180:\ra) -- cycle;
	}
	
	\foreach \i in {1,3,5} {
		\fill[black!80] (9*\ra+\i*\ra,3*\ra) --++(0:\ra) --++(90:\ra) --++(180:\ra) -- cycle;
	}

	\fill[black!80] (9*\ra+\ra,-\ra) --++(0:\ra) --++(90:\ra) --++(180:\ra) -- cycle;

\foreach \i in {0,1}{
	\draw[white] (9*\ra,\i*\ra) -- (9*\ra+7*\ra,\i*\ra);
}
\foreach \i in {1,...,6}{
	\draw[white] (9*\ra+\i*\ra,0) -- (9*\ra+\i*\ra,4*\ra);
}
\foreach \i in {2,3,4}{
	\draw[white] (9*\ra+\ra,\i*\ra) -- (9*\ra+6*\ra,\i*\ra);
}

\foreach \i in {0,7}{
	\draw[white] (9*\ra+\i*\ra,0) -- (9*\ra+\i*\ra,\ra);
}


	\foreach \i in {0,1,3,5,6} {
		\fill[black!80] (18*\ra+\i*\ra,0) --++(0:\ra) --++(90:\ra) --++(180:\ra) -- cycle;
	}
	
	\foreach \i in {1,2,4,5} {
		\fill[black!80] (18*\ra+\i*\ra,\ra) --++(0:\ra) --++(90:\ra) --++(180:\ra) -- cycle;
	}
	
	\foreach \i in {2,3,4} {
		\fill[black!80] (18*\ra+\i*\ra,2*\ra) --++(0:\ra) --++(90:\ra) --++(180:\ra) -- cycle;
	}
	
	\foreach \i in {1,3,5} {
		\fill[black!80] (18*\ra+\i*\ra,3*\ra) --++(0:\ra) --++(90:\ra) --++(180:\ra) -- cycle;
	}

	\fill[black!80] (18*\ra+\ra,-\ra) --++(0:\ra) --++(90:\ra) --++(180:\ra) -- cycle;
	
	\foreach \i in {2,3,4} {
		\fill[newcol] (18*\ra+\i*\ra,-\ra) --++(0:\ra) --++(90:\ra) --++(180:\ra) -- cycle;
	}

\foreach \i in {0,1}{
	\draw[white] (18*\ra,\i*\ra) -- (18*\ra+7*\ra,\i*\ra);
}
\foreach \i in {1,...,6}{
	\draw[white] (18*\ra+\i*\ra,0) -- (18*\ra+\i*\ra,4*\ra);
}
\foreach \i in {2,3,4}{
	\draw[white] (18*\ra+\ra,\i*\ra) -- (18*\ra+6*\ra,\i*\ra);
}

\foreach \i in {0,7}{
	\draw[white] (18*\ra+\i*\ra,0) -- (18*\ra+\i*\ra,\ra);
}
\foreach \i in {2,3,4,5}{
	\draw[very thick, white] (18*\ra+\i*\ra,-\ra) -- (18*\ra+\i*\ra,0);
}
\foreach \i in {1,2}{
	\draw[thick, white] (18*\ra+2*\ra,\ra-\i*\ra) -- (18*\ra+6*\ra,\ra-\i*\ra);
}

\node at (8*\ra,2*\ra) {$\longrightarrow$};
\node at (17*\ra,2*\ra) {$\longrightarrow$};
\end{tikzpicture}
\caption{Adding one hole and three tiles to a rooted plus on the boundary.}
\label{fig:plus3}
\end{figure}

The $h=5$ case in a $5\times 5$ square is the largest crystallized polyomino without a rooted plus, and in particular the jump from $g(5)=19$ to $g(6)=23$ is the last time there is a jump of more than 3 in $g(h)$.

\section{Concluding Remarks and Open Problems}


We have completely determined the sequence $g(h)$ which gives the minimum number of tiles needed to construct a polyomino with $h$ holes, and determined precisely when these crystallized polyominoes attain efficient structural conditions. Using Theorems \ref{T:holesminper}, \ref{T:gpronicperfectsquare}, and \ref{ghsolved}, we have computed in Table 2 the values of $g(h)$ for $9\leq h\leq 113$.

We have also continued the enumeration of polyominoes with maximally many holes started in \cite{bworld} by proving that crystallized polyominoes in the K--R sequence constructed in \cite{kahle2018polyominoes} are unique. It remains an open problem to enumerate the rest of the crystallized polyominoes, that is, to find the number of free crystallized polyominoes with $h$ holes for $h\neq (2^{2l}-1)/3$ and $h>8$. Expansion can perhaps be leveraged in this pursuit, but in its current formulation it does not capture small changes to the boundary. For instance the the tile on the bottom of the left boundary section in $S_{1,1}$ in Figure \ref{fig:evens1} can be moved to the bottom right corner. But for any $l$, $E^l(S_{1,1})$ will necessarily have holes in all four interior corners, so we cannot simply copy the same boundary structure when a corner and one of its adjacent tiles are both missing, and these crystallized arrangements do not expand up.

There is a subfamily of polyominoes with holes, called \textit{punctured polyominoes} that have been studied before but without asking the extremal problem of maximizing the number of holes within that subfamily. For definitions and results on punctured polyominoes see Chapter 2, 8, and 11 of \cite{guttmann2009polygons}. An important observation is that punctured polyominoes do not capture the topological structure that we are interested in studying in this paper.

Viewing polyominoes as embedded 2-dimensional cubical complexes and the number of holes as the rank of the first homology group with coefficients in $\mathbb{Z}_2$, this problem also generalizes immediately to the question of maximizing the rank of homology in higher dimensional cubical complexes. For example, viewing polycubes as 3-dimensional cubical complexes, the problem corresponds to maximizing the ranks of the first and the second homology groups with coefficients in $\mathbb{Z}_2$. An analogue of the \hyperref[compressionlem]{Compression Lemma} can be formulated in this context, but the extent to which this will preserve efficient conditions in that setting is not yet known.

\section*{Acknowledgments.}
This work was supported by HFSP RGP0051/2017, NSF DMS 17-13012, and NSF DMS-1352386.

\begin{table}[H]
\begin{center}
\begin{tabular}{c c|c c| c c | c c |c c }
    \hline
    h & g(h) & h & g(h) & h &g(h) & h & g(h) & h & g(h) \\
    \hline
    \textbf{9} & 30 & 30 & 81 &\textbf{51} &128 &72 & 175 & 93&221\\ 
    10 & 33 & 31 & 83& 52 &131 & 73 & 177 & 94&223\\
    \textbf{11} &  35 & \textbf{32} & 85& 53& 133 &\textbf{74} & 179 & \textbf{95}& 225\\
    12 & 38 & 33 & 88 & 54 &135  & 75& 182 & 96&228\\
    \textbf{13} & 40 & 34 & 90 & \textbf{55} &137 &76 & 184 &97 &230\\
    14 & 43 & \textbf{35} & 92 & 56 &140 &77 & 186 &98&232\\
    \textbf{15} & 45 & 36 & 95 & 57 &142  &78 & 188 &99&234\\
    16 & 48 & 37 & 97 &58& 144 & \textbf{79} & 190 & 100 &236\\
    \textbf{17} &  50 & 38 & 99 &\textbf{59}& 146  & 80& 193 & \textbf{101}&238\\
    18 & 53 & \textbf{39}& 101& 60 & 149 & 81 & 195& 102&241\\
    19 & 55 & 40 &104 &61 & 151  & 82 & 197 & 103&243\\
    20 & 57 & 41 &106 &62 &  153 & 83 & 199 & 104&245\\
    \textbf{21} & 59 &  42& 108 &63 &155 & \textbf{84} & 201  & 105&247\\
    22 &  62 & \textbf{43} & 110 & \textbf{64} &157 & 85 & 204& 106&249\\
    \textbf{23} &  64 & 44 &113&65 & 160 & 86 & 206 & \textbf{107}&251\\
    24 &  67 & 45 &115 &66 & 162 & 87 & 208 & 108&254\\
    25 &  69 & 46 & 117&67 & 164 & 88 & 210 &109&256\\
    \textbf{26} &  71 & \textbf{47} & 119&68 & 166 & \textbf{89} & 212 &110&258\\
    27 &  74 & 48 &122 &\textbf{69} & 168 & 90 &215 &111&260\\
    28 &  76 & 49 &124 &70 & 171 & 91 & 217& 112 & 262\\
    \textbf{29} & 78 & 50 &126 &71 & 173 & 92 & 219 &\textbf{113}&264\\

    \hline

\end{tabular}
\caption{Values of $g(h)$ for $9\leq h\leq 114$. We indicate in bold the values of $h$ if $h=h_\alpha$ for an $\alpha \in \{N^2, N(N+1)\}$. For $h<9$, see Table \ref{tab:my_label}.}
\label{Table:113}
\end{center}
\end{table}
\newpage
\bibliographystyle{plain}
\bibliography{bibliography.bib}

\end{document}